\newtheorem{thm}{Theorem}[section]
\newtheorem{corollary}{Corollary}
\newtheorem{lm}{Lemma}
\newtheorem{pp}{Proposition}
\newtheorem{claim}{Claim}
\theoremstyle{definition}
\newtheorem{df}{Definition}
\newtheorem{remark}{Remark}
\newtheorem{assumption}{Assumption}
\newcommand*\diff{\mathop{}\!\mathrm{d}}
\newcommand{\ep}{\epsilon}
\newcommand{\zm}{{Z^m}}
\newcommand{\zmm}{{Z^{m-1}}}
\newcommand{\zi}{{\zeta^{(i)}}}
\newcommand{\zj}{{\zeta^{(j)}}}
\newcommand{\zk}{{\zeta^{(k)}}}
\newcommand{\zl}{{\zeta^{(l)}}}
\newcommand{\pmt}{{(0, T) \times \partial N}}
\newcommand{\nxxi}{{\mathcal{N}(\vec{x}, \vec{\xi})}}
\newcommand{\ntxxi}{{{\mathcal{R}}(\vec{x}, \vec{\xi})}}
\newcommand{\uniongamma}{{\bigcup_{j=1}^{4}\gamma_{x_j, \xi_j}((0, \infty))}}
\newcommand{\unionGamma}{\Gamma({\vec{x}, \vec{\xi}})}
\newcommand{\capgamma}{{\cap_{j=1}^{4}\gamma_{x_j, \xi_j}((0, \infty))}}
\newcommand{\tM}{{\widetilde{M}}}
\newcommand{\tg}{{\tilde{g}}}
\newcommand{\tw}{{\widetilde{w}}}
\newcommand{\tQ}{{\widetilde{Q}}}
\newcommand{\yr}{{y_|}}
\newcommand{\etar}{{\eta_|}}
\newcommand{\yu}{{y_{0}}}
\newcommand{\etau}{{\eta^0}}
\newcommand{\wfset}{{\text{\( \WF \)}}}
\newcommand{\EUqr}{{ \mathcal{E}_U^{\mathrm{reg}} (q) }}
\newcommand{\EUq}{{ \mathcal{E}_U (q) }}
\newcommand{\EWqr}{{ \mathcal{E}_W^{\mathrm{reg}} (q) }}
\newcommand{\EWq}{{ \mathcal{E}_W (q) }}
\newcommand{\hmupj}{\widehat{\mu}_{\partial}^{(j)}}
\newcommand{\hmupa}{\widehat{\mu}_{\partial}^{(1)}}
\newcommand{\hmuj}{\widehat{\mu}^{(j)}}
\newcommand{\hmua}{\widehat{\mu}^{(1)}}
\newcommand{\hmub}{\widehat{\mu}^{(2)}}
\newcommand{\bx}{x_|}
\newcommand{\bxi}{\xi_|}
\newcommand{\bTM}{{}^{b}\dot{T}^* M}
\newcommand{\intM}{M^{{o}}}
\newcommand{\Char}{\mathrm{Char}}
\newcommand{\gsetint}{\mathcal{G}^{\mathrm{int}}}
\newcommand{\compchar}{\dot{\Sigma}_g}
\newcommand{\epslam}{{\partial_{\epsilon_1}\partial_{\epsilon_2}\partial_{\epsilon_3}\partial_{\epsilon_4} \Lambda(f) |_{\epsilon_1 = \epsilon_2 = \epsilon_3 = \ep_4=0}}}
\newcommand{\epslamt}{{\partial_{\epsilon_1}\partial_{\epsilon_2}\partial_{\epsilon_3} \Lambda(f) |_{\epsilon_1 = \epsilon_2 = \epsilon_3=0}}}
\newcommand{\epslamone}{{\partial_{\epsilon_1} \Lambda(\ep_1 f_1) |_{\epsilon_1=0}}}
\newcommand{\epslamvec}{{ \partial_{\vec{\epsilon}} \Lambda(f) |_{\vec{\epsilon}=0} }}
\newcommand{\RN}[1]{%
    \textup{\uppercase\expandafter{\romannumeral#1}}%
}
\newcommand{\TMpm}{T_{\partial M, \pm} M}
\newcommand{\LcMpm}{L^{*}_{\partial M, \pm}M}
\newcommand{\LcMpo}{L^{*}_{\partial M, +}M}
\newcommand{\LcMng}{L^{*}_{\partial M, -}M}
\DeclareMathOperator{\WF}{WF}
\DeclareMathOperator{\supp}{supp}
\DeclareMathOperator{\ufour}{\mathcal{U}^{(4)}}
\title[Inverse BVP for wave equations with quadratic nonlinearities]{Inverse Boundary Value Problems for wave equations with quadratic nonlinearities} 
\author{Gunther Uhlmann and Yang Zhang}
\begin{document}
\maketitle

\begin{abstract}
We study inverse problems for the nonlinear wave equation 
$\square_g u + w(x,u, \nabla_g u) = 0$
in a Lorentzian manifold $(M,g)$ with 
boundary, where $\nabla_g u$ denotes the gradient and  $w(x,u, \xi)$ is smooth and quadratic in $\xi$. 
Under appropriate assumptions, 
we show that the conformal class of the Lorentzian metric $g$ 
can be recovered up to diffeomorphisms, from the knowledge of the Neumann-to-Dirichlet map. 
With some additional conditions, we can recover the metric itself up to diffeomorphisms. 
Moreover, we can recover the second and third quadratic forms in the Taylor expansion of $w(x,u, \xi)$ with respect to $u$ up to null forms. 

\end{abstract}

\section{Introduction}
In recent years starting with \cite{Kurylev2014, Kurylev2014a}, 
there have been many works studying inverse problems for nonlinear hyperbolic equations
\cite{Barreto2021,Barreto2020, Chen2019,Chen2020,Hoop2019,Hoop2019a,Uhlig2020,Feizmohammadi2019,Hintz2020, Kurylev2014,Balehowsky2020,Lai2021,Lassas2017,Tzou2021,Uhlmann2020,Dutta2020,Hintz2021,ultra21}. 
For an overview of the recent progress see \cite{Uhlmann2021}.
One of the main features of these papers is the fact that the nonlinearities help. 
The interaction of distorted planes waves in the nonlinear case 
produces new waves that give information for the inverse problems.
Several results that are not known for the corresponding linear equations can be proved for the nonlinear equations.

Several of papers mentioned above have used the source-to-solution map as the data.
Inverse boundary value problems have been considered for nonlinear elastic equations in \cite{Uhlmann2019, Hoop2019}
and for semilinear equations in \cite{Hintz2020,Hintz2021}.
In this paper, we consider inverse boundary value problems for quadratic derivative nonlinear wave equations. 
One example is the wave map equation, 
see \cite[Chapter 6]{Tao2006} and \cite{Tataru2004}.
The corresponding inverse problem with the source-to-solution map as the data has been studied in \cite{Wang2019}.
In the following, we describe the inverse boundary value problems that we consider and present the main results.

Let $(M,g)$ be a $(1+3)$-dimensional globally hyperbolic Lorentzian manifold with boundary, where the metric has the signature $(-,+,+,+)$.
By \cite{Kurylev2018}, we can suppose $M = \mathbb{R} \times N$, where $N$ is a $3$-dimensional manifold with boundary $\partial N$.  
Assume the boundary $\partial M = \mathbb{R} \times \partial N$ is timelike and strictly null-convex as in \cite{Hintz2020}.
Here $\partial M$ is null-convex if for any null vector $v \in T_p \partial M$ one has
\begin{eqnarray}\label{def_nconvex}
\kappa(v,v) = g (\nabla_v \nu, v) \geq 0,
\end{eqnarray}
where we denote by $\nu$ the outward pointing unit normal vector field on $\partial M$. 
We say $\partial M$ is strictly null-convex if (\ref{def_nconvex}) holds with the strict inequality for nonzero null vector $v$.

We consider the quadratic derivative nonlinear wave equation 
\begin{align}\label{ihmNBC}
\begin{split}
\square_g u + w(x,u, \nabla_g u) = 0, \quad &\mbox{on } M,\\
\partial_\nu u(x) = f, \quad &\mbox{on } \partial M,\\
{u} = 0, \quad &\mbox{for } t <0,
\end{split}
\end{align}
where $w(x, u, \xi)$ is smooth in $x, u$ and quadratic in $\xi \in T_xM$. 
Here we use
$\nabla_g u$ to denote the gradient of $u$ and \[
\square_g = (-\det g)^{-\frac{1}{2}} \partial_j ((-\det g)^{\frac{1}{2}}g^{jk} \partial_k)
\] is the Laplace-Beltrami operator.
The boundary data $f$ 
is assumed to be compatible with the initial condition
and $\|f\|_{C^{6} ([0,T] \times \partial N)} $ is assumed to be small enough,
such that 
(\ref{ihmNBC}) is well-posed, see Section \ref{Sec_well}.
Then we can define 
\[
\Lambda f = u|_{\partial M}
\]
as the Neumann-to-Dirichlet (ND) map.
In this work, we study the inverse problem of recovering the quadratic form $w(x, u, \xi)$ and the Lorentzian metric $g$ from the ND map under proper assumptions. 
Similarly one can expect the same results assuming the knowledge of the Dirichlet-to-Neumann map (DN map).

\subsection{Main Results.}
We first introduce some definitions to state the main results. 
A smooth path $\mu:(a,b) \rightarrow M$ is timelike if $g(\dot{\mu}(s), \dot{\mu}(s)) <0 $ for any $s \in (a,b)$. 
It is causal if $g(\dot{\mu}(s), \dot{\mu}(s)) \leq 0 $ and $\dot{\mu}(s) \neq 0$ for any $s \in (a,b)$. 
For $p, q \in M$, we denote by $p < q$ (or $p \ll q$) if $p \neq q$ and there is a future pointing casual (or timelike) curve from $p$ to $q$. 
We denote by $p \leq q$ if either $p = q$ or $p<q$. 
The chronological future of $p$ is the set $I^+(p) = \{q \in M: \ p \ll q\}$ 
and the causal future of $p$ is the  set $J^+(p) = \{q \in M: \ p \leq q\}$.
Similarly we can define the chronological past $I^-(p)$ and the causal past $J^-(p)$.
For convenience, we use the notation 
$I(p,q)$ to denote the diamond set $I^+(p) \cap I^-(q)$.

We write the Taylor expansion of $w(x,u,\xi)$ in $u$ as
\begin{align}\label{def_taylorw}
w(x,u,\xi) = \mathcal{N}_0(x, \xi) + u \mathcal{N}_1 (x, \xi) + u^2 \mathcal{M}(x, \xi) + o(|u|^2 \cdot |\xi|^2),
\end{align}
where $\mathcal{N}_0, \mathcal{N}_1, \mathcal{M}$ are quadratic forms in $\xi$ and depending smoothly on $x$.
We sometimes omit the variable $x$ and write them as $\mathcal{N}_0(\xi, \eta)$, $\mathcal{N}_1(\xi, \eta)$, and  $\mathcal{M}(\xi, \eta)$ to emphasize they are quadratic. 
In \cite{Wang2019}, it is assumed that $\mathcal{N}_0, \mathcal{N}_1$ are null forms while $\mathcal{M}$ is not null.
Here we say a quadratic form $w(\xi, \eta)$ is null if $w(\xi, \xi) = 0$ for any $\xi$ satisfying $g(\xi, \xi) = 0$. 
By \cite[Lemma 2.1]{Wang2019}, null forms are linear combinations of the Lorentzian metric $g$ and anti-symmetric quadratic forms that vanish when $\xi = \eta$.

In this work, first we make the assumption on the quadratic form $w(x,u,\xi)$ that 
\begin{align}\label{assum_n0}
\mathcal{N}_0 \text{ is a null form and } \mathcal{N}_1, \mathcal{M} \text{ are symmetric}.
\end{align}
Then we have the following theorem which says that from the ND map $\Lambda$ we can recover $\mathcal{N}_1$ and $\mathcal{M}$ up to null forms. 
\begin{thm}\label{thm_recoverNM}
    Let $(M,g)$ be a globally hyperbolic Lorentzian manifold with timelike and 
    null-convex boundary. 
    Consider the quadratic derivative nonlinear wave equation
    \[
    \square_g u + w^{(j)}(x,u, \nabla_g u) = 0, \text{ for } j=1,2.
    \]
    Assume the quadratic form $ w^{(j)}(x,u, \xi)$ has the Taylor expansion in (\ref{def_taylorw})
    and satisfies the assumption (\ref{assum_n0}). 
    If the Neumann-to-Dirichlet map $\Lambda_j$ are equal for 
    all functions $f$ in a small neighborhood of the zero functions in $C^6([0,T] \times \partial N)$, 
    then there are smooth $c_1(x), c_2(x)$ such that
    \begin{align*}
    \mathcal{N}^{(2)}_1 (x, \xi) = \mathcal{N}^{(1)}_1 (x, \xi) + c_1(x) g(\xi, \xi), \quad 
    \mathcal{M}^{(2)} (x, \xi) = \mathcal{M}^{(1)} (x, \xi) + c_2(x) g(\xi, \xi),
    \end{align*}
    for $ x \in \mathbb{W}$ and $\xi \in T^*_x M$, where 
    \[
    \mathbb{W} = \bigcup_{y^-, y^+ \in (0,T) \times \partial N} I(y^-, y^+). 
    \]
\end{thm}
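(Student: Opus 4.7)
The approach is higher-order linearization at the boundary combined with propagation of singularities, adapted from the source-to-solution treatment of \cite{Wang2019}. Consider boundary inputs $f = \sum_{j=1}^{N}\epsilon_j f_j$ and the solutions $u^{(i)}(\vec{\epsilon})$ of (\ref{ihmNBC}) with $w = w^{(i)}$, $i=1,2$, and write $u^{(i)}_{j_1 \cdots j_k} := \partial_{\epsilon_{j_1}}\cdots \partial_{\epsilon_{j_k}} u^{(i)}|_{\vec{\epsilon}=0}$. Since $w^{(i)}$ vanishes to second order at the origin, the first linearizations coincide: $u_j = u^{(1)}_j = u^{(2)}_j$ solves the free linear Neumann problem $\square_g u_j = 0$, $\partial_\nu u_j|_{\partial M} = f_j$, with zero past data; the $f_j$'s can be chosen so that each $u_j$ is a distorted plane wave whose wavefront set lies on a prescribed null bicharacteristic entering from $(0,T)\times \partial N$ and passing through a chosen interior point $q \in \mathbb{W}$ with chosen null covector $\xi_j \in T_q^* M$. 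The hypothesis $\Lambda_1 = \Lambda_2$ transfers to each mixed derivative, giving $(u^{(1)}_{j_1 \cdots j_k} - u^{(2)}_{j_1 \cdots j_k})|_{\partial M} = 0$ for every $k$ and all admissible data.

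To recover $\mathcal{N}_1$, I would use three-fold linearization ($N=3$). The difference $v^{(3)} := u^{(1)}_{123} - u^{(2)}_{123}$ solves a linear wave equation whose source gathers the cubic pathways through (\ref{def_taylorw}): the direct contribution $u_a \mathcal{N}_1^{(i)}(\nabla u_b, \nabla u_c)$ and the cascaded contribution $\mathcal{N}_0^{(i)}(\nabla u_a, \nabla u^{(i)}_{bc})$ coming from the second-order waves $u^{(i)}_{bc}$. Choose three null covectors $\xi_1,\xi_2,\xi_3$ at $q \in \mathbb{W}$ in general position. The source is, near $q$, a conormal distribution whose principal symbol is polynomial in $\mathcal{N}_1^{(1)} - \mathcal{N}_1^{(2)}$ and $\mathcal{N}_0^{(1)} - \mathcal{N}_0^{(2)}$, evaluated on the $\xi_j$'s; a Neumann parametrix for $\square_g$ propagates this new singularity along the bicharacteristic through $q$ to $\partial M$. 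Vanishing of $v^{(3)}|_{\partial M}$ forces the principal symbol at $q$ to vanish. Because $\mathcal{N}_0^{(1)} - \mathcal{N}_0^{(2)}$ is itself a null form (null forms being a vector space), its cascaded contribution on null covectors is of null-form type and can be separated from the $\mathcal{N}_1$ contribution by varying $(\xi_1,\xi_2,\xi_3)$. This leaves $(\mathcal{N}_1^{(1)} - \mathcal{N}_1^{(2)})(\xi,\xi) = 0$ for every null $\xi \in T_q^* M$, and the algebraic fact that a symmetric quadratic form vanishing on the null cone of $g$ is a multiple of $g(\xi,\xi)$ gives $\mathcal{N}_1^{(2)} = \mathcal{N}_1^{(1)} + c_1(x)\, g(\xi,\xi)$ on $\mathbb{W}$.

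For $\mathcal{M}$, I would repeat the argument with $N=4$. The principal symbol at a quadruple interaction point $q$ of the source for $v^{(4)} := u^{(1)}_{1234} - u^{(2)}_{1234}$ involves the direct quartic $u_a u_b \mathcal{M}^{(i)}(\nabla u_c, \nabla u_d)$, together with cascades through $u^{(i)}_{jk}$ and $u^{(i)}_{jkl}$ involving $\mathcal{N}_1^{(i)}$ and $\mathcal{N}_0^{(i)}$. Using the step 1 identity $\mathcal{N}_1^{(2)} - \mathcal{N}_1^{(1)} = c_1(x) g$ and the null-form structure of $\mathcal{N}_0^{(i)}$, the cascaded contributions at leading order reduce, on null covectors, to quantities already of null-form type; the same propagate-and-observe argument then yields $(\mathcal{M}^{(1)} - \mathcal{M}^{(2)})(\xi,\xi) = 0$ on null $\xi$, and symmetry of $\mathcal{M}^{(i)}$ gives $\mathcal{M}^{(2)} = \mathcal{M}^{(1)} + c_2(x)\, g(\xi,\xi)$ on $\mathbb{W}$.

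The principal obstacles sit on the microlocal and geometric side of the Neumann boundary formulation. First, one must construct boundary data $f_j \in C^6$ producing distorted plane waves that reach any prescribed point $q \in \mathbb{W}$ with any prescribed null direction; the strict null-convexity of $\partial M$ is used to preclude grazing rays and to guarantee that the interaction singularities exit through $(0,T) \times \partial N$, where they can be detected. Second, the principal symbols of the cascaded interactions must be computed with a parametrix for $\square_g$ with Neumann condition that correctly transports symbols through boundary reflections. Third, and most delicate, the $\mathcal{N}_0^{(i)}$ cascades are not assumed equal for $i=1,2$; their contributions must be organized using the null-form hypothesis so that they either cancel or collapse to null-form terms that can be absorbed into the $c_i(x) g$ ambiguity. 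The final algebraic step—polarization of a symmetric form vanishing on the null cone of $g$—is standard once the symbol identity is established.
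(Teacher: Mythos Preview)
Your strategy is the paper's: three-fold linearization with distorted plane waves for $\mathcal{N}_1$ (Section~\ref{Sec_threewaves}), four-fold for $\mathcal{M}$ (Section~\ref{Sec_fourwaves}), reading the principal symbol of the new singularity at the interaction point $q$ from the boundary trace of the linearized ND map.

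There is, however, a real gap in your algebraic step. After the $\mathcal{N}_0$-cascade is disposed of (the paper computes the full cascade sum explicitly as $2C_0^2\,g(\zeta,\zeta)$, which vanishes when the outgoing $\zeta$ is null; note this works because each $\mathcal{N}_0^{(i)}$-cascade \emph{separately} is a multiple of $g(\zeta,\zeta)$, not because the difference $\mathcal{N}_0^{(1)}-\mathcal{N}_0^{(2)}$ is null---the cascade is quadratic in $\mathcal{N}_0$), the three-wave symbol you actually recover is
\[
\mathcal{Q}(\zeta^{(1)},\zeta^{(2)},\zeta^{(3)})=\mathcal{N}_1(\zeta,\zeta)-\sum_{j=1}^3\mathcal{N}_1(\zeta^{(j)},\zeta^{(j)}),\qquad \zeta=\zeta^{(1)}+\zeta^{(2)}+\zeta^{(3)},
\]
evaluated only on the set $\mathcal{Y}$ of triples of linearly independent null covectors with null sum. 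Your sentence ``This leaves $(\mathcal{N}_1^{(1)}-\mathcal{N}_1^{(2)})(\xi,\xi)=0$ for every null $\xi$'' is precisely what must be \emph{proved} from $\mathcal{Q}^{(1)}=\mathcal{Q}^{(2)}$ on $\mathcal{Y}$, and it is not a polarization identity: each $\zeta^{(j)}$ is already null, so the constraint mixes values of $\mathcal{N}_1$ at four null covectors simultaneously and there is no obvious way to isolate a single one. The paper supplies this missing step as Lemma~\ref{lm_recoverN1} via a constant-rank argument: viewing $\mathcal{F}=g(\zeta,\zeta)$ and $\mathcal{G}=\mathcal{Q}$ as maps from the $9$-manifold of triples to $\mathbb{R}$, the hypothesis says $\mathcal{G}$ vanishes on the $8$-manifold $\mathcal{F}^{-1}(0)$, so $(\mathcal{F},\mathcal{G})$ has rank one there and $D\mathcal{G}$ is a scalar multiple of $D\mathcal{F}$; unwinding gives $\mathcal{N}_1(\zeta,\zeta)=\tfrac{3\lambda}{2}g(\zeta,\zeta)$ on null $\zeta$. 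The four-wave analogue (Proposition~\ref{pp34}, quoting \cite{Wang2019}) has the same character. Without this lemma your argument does not close.
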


Next we make the assumption on 
the quadratic form $w(x,u,\xi)$ that
\begin{align}\label{assum_n0n1}
\mathcal{N}_0, \mathcal{N}_1 \text{ are null forms and } \mathcal{M} \text{ is not null.} 
\end{align}
With this assumption, we show that from the ND map restricted to a proper open set on the boundary, one can determine the conformal class of the metric at observable interior points up to diffeomorphisms. 
\begin{thm}\label{thm_recoverg} 
    Let $g_j$, $j=1,2$ be two Lorentzian metrics on a smooth manifold $ M = \mathbb{R} \times N $ such that $(M, g_j)$ are globally hyperbolic Lorentzian manifolds with timelike and strictly null-convex boundary. 
    Let $\hmupj: \ [0,1] \rightarrow (0,T) \times \partial N$ be two timelike paths 
    with 
    $y^{\pm,(j)} = \hmupj(s_\pm)$,  where $ 0 < s_- < s_+ <1$.
    Consider 
    the quadratic derivative nonlinear wave equations
    \[
    \square_{g_j}u + w^{(j)}(x,u, \nabla_{g_j} u) = 0, \quad j=1,2,
    \] 
    and the Neumann-to-Dirichlet map $\Lambda_j$.
    Assume the quadratic form $w^{(j)}(x,u, \xi)$ has the Taylor expansion in (\ref{def_taylorw}) and 
    satisfies the assumption (\ref{assum_n0n1}). 
    Let ${O}_j$ be connected open sets in $\partial N$ and let $V_{\partial, j} = (0, T) \times {O}_j$ 
    be small neighborhoods of $\hmupj((0,1))$ satisfying $(I(\hmupj(0), \hmupj(1)) \cap \partial M) \subset V_{\partial, j}$. 
    If there is a diffeomorphism $\Phi_\partial: V_{\partial, 1} \rightarrow V_{\partial, 2}$ 
    with $y^{\pm,(2)} = \Phi_\partial(y^{\pm,(1)})$ such that
\begin{align*} 
(\Phi_\partial^{-1})^* \circ \Lambda_1 \circ \Phi_\partial^* (f) = \Lambda_2(f)
\end{align*}
    for all functions $f$ in a small neighborhood of the zero function in $C^6(V_{\partial, 2})$, 
    then there exists a diffeomorphism $\Psi: I(y^{-,(1)}, y^{+,(1)}) \rightarrow I(y^{-,(2)}, y^{+,(2)}) $ such that 
    $\Psi^*g_2 = e^\gamma g_1$ in $I(y^{-,(1)}, y^{+,(1)})$
    with $\gamma$ smooth.
    
    In particular, one can choose ${O}_j= \partial N$, i.e. $V_{\partial, j} = (0, T )\times \partial N$.
    In this case, there exists a diffeomorphism $\Psi: \mathbb{W}_1 \rightarrow \mathbb{W}_2$ such that $\Psi^*g_2 = e^\gamma g_1$,
    where 
    \[
    \mathbb{W}_j = \bigcup_{y^{-,(j)}, y^{+,(j)} \in {V}_{\partial,j}} I(y^{-,(j)}, y^{+,(j)}), \text{ for } j = 1,2.
    \] 
    
    Additionally, if $\mathcal{M}^{(1)} =\mathcal{M}^{(2)}$ are independent of $x$ or $g_1, g_2$ are Ricci flat, then the diffeomorphism $\Psi$ is an isometry. 
\end{thm}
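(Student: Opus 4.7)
The plan is to adapt the four-fold linearization method of Kurylev--Lassas--Uhlmann to the boundary value setting, using the ND map in place of the source-to-solution map. The argument proceeds in three stages: higher-order linearization of $\Lambda_j$ at zero, analysis of the principal symbol of the new singularity generated by a four-wave interaction at an interior point, and geometric reconstruction of the conformal class from singularities registered at $\partial M$.

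First I would take $f = \sum_{k=1}^{4} \epsilon_k f_k$ and examine $\epslamvec$. The first-order linearizations $u^{(k)}$ satisfy $\square_{g_j} u^{(k)} = 0$ with Neumann data $f_k$, and the mixed fourth derivative isolates the quartic interaction term whose top-order contribution comes from $\mathcal{M}^{(j)}$; under the null hypothesis \eqref{assum_n0n1} the terms coming from $\mathcal{N}_0^{(j)}$ and $\mathcal{N}_1^{(j)}$ vanish at the leading symbolic level, because a null form pairs a null codirection with itself to give zero. I would choose the $f_k$ so that the $u^{(k)}$ are distorted plane waves conormal to null hypersurfaces $K_k$ entering $\intM$ along covectors in $\LcMng$. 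The timelike and strictly null-convex hypothesis on $\partial M$ ensures these waves can be built near $\partial M$ without grazing or trapping issues, via the standard linear Neumann parametrix.

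At a common interaction point $q \in \intM$ where four wavefronts $K_k$ meet transversally, the quadratic-derivative nonlinearity produces a new conormal singularity propagating out of $q$ along the null geodesic with codirection $\zeta = \sum_k \zeta^{(k)}$, whose principal symbol is proportional to $\mathcal{M}^{(j)}(q, \zeta, \zeta)$. Since $\mathcal{M}^{(j)}$ is not null, I can arrange the $\zeta^{(k)}$ so that this symbol is nonzero. The new singularity reaches $\partial M$ (again invoking strict null-convexity to preclude trapping) and is registered by $\Lambda_j$. Sweeping over four-tuples of incoming null directions, I reconstruct, for each $q \in \mathbb{W}_j$, its earliest observation set on $V_{\partial, j}$. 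Because $\Phi_\partial$ intertwines $\Lambda_1$ and $\Lambda_2$, the two families of earliest observation sets are matched through $\Phi_\partial$; the Kurylev--Lassas--Uhlmann reconstruction procedure, adapted to boundary problems as in \cite{Hintz2020, Hintz2021}, then produces the diffeomorphism $\Psi$ carrying the conformal class, yielding $\Psi^* g_2 = e^\gamma g_1$ on $I(y^{-,(1)}, y^{+,(1)})$.

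For the isometry assertion I would use Theorem~\ref{thm_recoverNM}: pulled back by $\Psi$, the two quadratic forms satisfy $\Psi^* \mathcal{M}^{(2)} - \mathcal{M}^{(1)} = c_2(x) g_1$ modulo an antisymmetric null form. When $\mathcal{M}^{(j)}$ is independent of $x$ this rigidity forces $e^\gamma$ to be constant along $\Psi$, and matching values on $\partial M$ gives $\gamma \equiv 0$; in the Ricci-flat case, the conformal transformation law of the Ricci tensor becomes a second-order equation on $\gamma$ whose only solution compatible with $\gamma|_{\partial M \cap \mathbb{W}_1} = 0$ is $\gamma \equiv 0$, so $\Psi$ is an isometry. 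The main obstacle throughout is the boundary microlocal analysis: constructing the distorted plane waves as solutions of the linear Neumann problem rather than as compactly supported interior sources, controlling reflections at $\partial M$ so they do not corrupt the interior four-wave interaction (this is precisely where strict null-convexity is used), and verifying that the principal symbol of the new singularity at $q$ survives propagation back to the boundary so it can be extracted unambiguously from $\Lambda_j$.
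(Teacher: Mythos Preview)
Your outline for the conformal class is broadly in the right spirit, but it diverges from the paper at a crucial structural point and has a genuine gap in the isometry step.

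\textbf{Conformal class: extension versus direct boundary observation.} The paper does \emph{not} read off earliest light observation sets directly on $V_{\partial,j}$. Instead it first recovers the boundary jet of $g_j$ on $V_{\partial,j}$ from the linearized ND map, then extends $(M,g_j)$ smoothly to a globally hyperbolic $(\tM,\tg_j)$, and builds an \emph{open} observation set $V_j \subset \tM\setminus M$ containing a timelike path $\hmuj$. Singularities produced at $q\in\intM$ by the four-wave interaction are tracked out to $V_j$ using the ND map and the lens relation on $V_{\partial,j}$ (Lemmas~\ref{lm_setA}, \ref{lm_R}, \ref{lm_lens}); this reduces the problem to Assumption~\ref{assump}, after which \cite[Theorem~1.2]{Kurylev2018} applies verbatim because observation now takes place in an open subset of the ambient manifold. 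Your direct-on-the-boundary approach is what \cite{Hintz2020,Hintz2021} do, but there it relies on scattering control and a no-conjugate-points hypothesis; the present paper explicitly avoids that assumption, and the extension is precisely the device that replaces it. A correct write-up along your route would have to supply that machinery, which is not a minor adaptation.

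\textbf{Isometry step: the use of Theorem~\ref{thm_recoverNM} is invalid.} Theorem~\ref{thm_recoverNM} compares two nonlinearities for the \emph{same} metric $g$; you cannot invoke it once the metrics differ, even after pulling back by $\Psi$. The paper's argument is symbolic: after reducing to $g_1=e^{2\gamma}g_2$ with $\gamma$ and all its normal derivatives vanishing on $V_\partial$, it compares the principal symbols of $\epslamvec$ computed with $g_1$ and with $g_2$. Using the conformal scaling $\sigma^{(p)}(\tQ_{g_1})=e^{-\gamma(q)}\sigma^{(p)}(\tQ_{g_2})e^{3\gamma(p)}$ from \cite[Proposition~4.5]{Lassas2018}, the relation $\sigma^{(p)}(u^{(1)}_j)(q,\zj)=e^{-\gamma(q)}\sigma^{(p)}(u^{(2)}_j)(q,\zj)$, and $\mathcal{P}^{(1)}=e^{-4\gamma(q)}\mathcal{P}^{(2)}$ (which uses that $\mathcal{M}$ is $x$-independent), equality of the ND maps forces $e^{-5\gamma(q)}=1$, hence $\gamma(q)=0$. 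Your sketch ``rigidity forces $e^\gamma$ constant'' does not capture this mechanism and, as written, does not prove the claim. The Ricci-flat case is fine; it is exactly \cite[Corollary~1.3]{Kurylev2014}.
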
  
    \begin{remark}
    The ND map are gauge invariant under a diffeomorphism of $M$ that fixes $\partial M$, see Lemma \ref{lm_gauge}.
    This kind of invariance happens if we have the DN map for the wave equations, see \cite[Lem 2.1]{Stefanov2018} (see also \cite{Hintz2021} for semilinear wave equations). 

    \end{remark}
    As we mentioned, the inverse problems of recovering the metric and the nonlinear terms were originated in \cite{Kurylev2018} for the semilinear wave equation
    $
    \square_g u(x) + a(x)u^2(x) = f(x)
    $
    in a manifold without boundary. 
    The authors use the nonlinear interactions of distorted planes waves to 
    produce point source like singularities in an observable set, and then 
    determine the conformal class of the metric $g$ in this set by observing the leading singularities of the linearized source-to-solution map.
    The recovery for semilinear wave equations with general derivative-free nonlinear terms are considered in \cite{Lassas2018}. 
Compared to \cite{Wang2019} where it was considered as data the source to
solution map for the same type of nonlinear wave equations,
the main difficulty that we need to handle when considering the inverse boundary
value problem is the presence of the boundary. 
Our analysis consists of analyzing the interaction of singularities for the linearized
equation, 
and we need to consider multiple reflections of waves as well as
gliding rays following the propagation of singularities along the
generalized bicharacteristic flow as proved in \cite{Melrose1978}. 
With the null-convex boundary, we can only consider the reflections of singularities. 
In \cite{Hintz2020}, the
multiple reflections were handled using the scattering control method of \cite{Caday2018},
under the additional assumption of no-conjugate points. 
We don't make such an assumption here.
    
    
    


The structure of the paper is as follows. 
In Section \ref{Sec_well}, we establish the well-posedness of the boundary value problems (\ref{ihmNBC}) for small boundary data. 
In Section \ref{Sec_prelim}, we present some preliminaries from Lorentzian geometry and microlocal analysis, construct the distorted planes waves, and derive the asymptotic expansion of the solution to (\ref{ihmNBC}). 
In Section \ref{Sec_threewaves} and \ref{Sec_fourwaves}, we analyze the singularities produced by the nonlinear interactions of three and four distorted plane waves.
Based on these results, 
we establish Proposition \ref{thm33} to describe when a covector over $\partial M$ can be observed 
from the leading singularities of the boundary measurements, and we prove Theorem \ref{thm_recoverNM}. 
In Section \ref{obs}, we show the determination of the earliest light observation set by using the ND map, see Proposition \ref{pp_recoverEOS}, under some assumptions of the extended manifold $(\tM, \tg)$. 
In Section \ref{sec_recovery}, we finish the proof of Theorem \ref{thm_recoverg} by showing the desired assumptions can be fulfilled. 

\section{Local well-posedness}\label{Sec_well}
The well-posedness of the boundary value problem (\ref{ihmNBC}) with small boundary data $f$ can be established by the same arguments as in \cite{Hintz2020}. 
In the following let $T >0$ be fixed. Let $f \in C^{m+1} ([0,T] \times {\partial N})$ with $m \geq 5$ and $\|f\|_{C^{m+1} ([0,T] \times {\partial N})} \leq \epsilon_0$ with small positive number $\epsilon_0$ to be specified later. 
There exists a function $h \in C^{m+1} ([0,T] \times N)$ such that $\partial_\nu h|_{\partial M} = f$ and 
\[
\|h\|_{C^{m+1} ([0,T] \times N)} \leq\|f\|_{C^{m+1} ([0,T] \times {\partial N})} .
\]
Let $\tilde{u} = u -h $. 
If we rewrite the quadratic term as
\[
w(x,u, \nabla_g u) = q_{ij}(x,u) \nabla_g^i u \nabla_g^j u,
\] 
then we have 
\begin{equation}\label{hmNBC}
\begin{cases}
\square_g \tilde{u} = - \square_g h  - q_{ij}(x,\tilde{u} + h) \nabla^i_g (\tilde{u} + h) \nabla^j_g (\tilde{u} + h), &\mbox{on } M,\\
\partial_\nu \tilde{u} = 0, &\mbox{on } \partial M,\\
\tilde{u} = 0, &\mbox{for } t <0.
\end{cases}
\end{equation}
For $R>0$, we define $Z^m(R,T)$ as the set containing all functions $v$ such that
\[
v \in \bigcap_{k=0}^{m} W^{k, \infty}([0,T]; H^{m-k}(N)), 
\quad \|v\|^2_{Z^m} = \sup_{t \in [0, T]} \sum_{k=0}^m \|\partial_t^k v(t)\|^2_{H^{m-k}} \leq R^2.
\] 
In the following we abuse the notation $C$ to denote different constants that depends on $m, M, T$. 
We can show the following claim by Sobolev Embedding Theorem.
\begin{claim}\label{normineq}
Suppose $u \in Z^m(R, T)$.
Then $\|u\|_\zmm \leq \|u\|_\zm$ and $\nabla^j_g u \in Z^{m-1}(R, T)$,
$j = 1, \dots, 4$. Moreover, we have the following estimates. 
\begin{enumerate}[(1)]
    \item If $v \in Z^m(R', T)$, then $\|
    uv\|_\zm \leq C \|u\|_\zm \|v\|_\zm$.
    \item If $v \in \zmm(R', T)$, then $\|uv\|_\zmm \leq C \|u\|_\zm \|v\|_\zmm$.
    \item If $q(x,s) \in C^m(M \times \mathbb{R})$, 
    then $\|q(x, u)\|_\zm \leq C \|q\|_{C^m(M \times \mathbb{R})} (\sum_{l=0}^{m} \|u\|^l_\zm)$.
\end{enumerate}
\end{claim}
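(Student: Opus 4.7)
My plan is to reduce each assertion to standard Sobolev multiplication on the three-dimensional slice $N$, applied pointwise in $t \in [0,T]$. The workhorse is the bilinear estimate: for $s_1,s_2,s \geq 0$ with $s \leq \min(s_1,s_2)$ and $s_1+s_2-s > 3/2$,
\[
\|fh\|_{H^s(N)} \leq C\,\|f\|_{H^{s_1}(N)}\,\|h\|_{H^{s_2}(N)},
\]
which follows from three-dimensional Sobolev embeddings together with Kato--Ponce/Moser estimates, transferred to the manifold-with-boundary $N$ by a bounded extension operator. From this, $\|u\|_\zmm \leq \|u\|_\zm$ is immediate via $H^{m-k}(N) \hookrightarrow H^{m-1-k}(N)$. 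For $\nabla_g^j u = g^{jk}\partial_k u$, the derivative $\partial_k u$ simply reassigns one derivative (time vs.\ space) in the $Z^{m-1}$ count, and multiplication by the smooth coefficient $g^{jk}$ costs only a constant depending on $\|g\|_{C^{m-1}}$ by Leibniz.

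For (1), I would Leibniz out $\partial_t^k(uv) = \sum_{i=0}^k \binom{k}{i}(\partial_t^i u)(\partial_t^{k-i}v)$ and bound each summand in $H^{m-k}(N)$ by the bilinear estimate with $s_1 = m-i$, $s_2 = m-(k-i)$, $s = m-k$: trivially $s \leq s_1, s_2$, and $s_1+s_2-s = m \geq 5 > 3/2$. Summing over $i,k$ and taking $\sup_t$ yields $\|uv\|_\zm \leq C\|u\|_\zm\|v\|_\zm$. Part (2) is the same computation with $s_2 = m-1-(k-i)$ and $s = m-1-k$ for $k \leq m-1$, again giving $s_1+s_2-s = m$.

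For (3), I would apply Faà di Bruno / iterated chain rule so that any derivative of $q(x,u(x))$ of total order at most $m$ is a finite sum of monomials
\[
(\partial_x^\alpha \partial_s^{\,j} q)(x, u) \cdot \prod_{r=1}^{j} D^{\gamma_r} u, \qquad |\alpha| + \sum_r |\gamma_r| \leq m,\ |\gamma_r| \geq 1,\ j \leq m.
\]
The coefficient $(\partial_x^\alpha \partial_s^{\,j} q)(x, u)$ is bounded in $L^\infty$ by $\|q\|_{C^m(M \times \mathbb{R})}$, using $u \in \zm \hookrightarrow L^\infty(N)$ for $m \geq 2$. The remaining product of $j$ derivatives of $u$ is placed in $L^2(N)$ by iterating the bilinear estimate, each step permitted because $\sum_r |\gamma_r| \leq m$ leaves enough regularity to distribute one factor in $H^0$ and the rest in $L^\infty \supset H^2$. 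This yields a bound $\leq C\|q\|_{C^m}\|u\|_\zm^j$ per monomial; summing over $j = 0,1,\ldots,m$ gives the stated polynomial bound $C \|q\|_{C^m} \sum_{l=0}^m \|u\|_\zm^l$.

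The main obstacle is the multilinear bookkeeping in (3): at each stage of the iterated bilinear estimate, one must verify that the running Sobolev exponent stays nonnegative and that the index condition $s_1+s_2-s > 3/2$ is preserved across the product. These reduce to combinatorial inequalities on $m$, $j$, and the $|\gamma_r|$, and the abundant regularity $m \geq 5$ in three spatial dimensions together with the embedding $H^s(N) \hookrightarrow L^\infty(N)$ for $s \geq 2$ leaves ample slack to absorb the extreme cases where a single factor is forced into $H^0$.
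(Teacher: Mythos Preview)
Your proposal is correct and follows essentially the same route as the paper. The only cosmetic difference is in packaging: the paper expands in \emph{both} time and space derivatives via Leibniz and then argues that at most one factor can carry total order $\geq m-2$, placing that factor in $L^2$ and the rest in $L^\infty$ via Sobolev embedding; you instead expand only in time and invoke the bilinear Sobolev multiplication estimate $\|fh\|_{H^s} \lesssim \|f\|_{H^{s_1}}\|h\|_{H^{s_2}}$ on each spatial slice, which encapsulates the same embedding argument. Both rely on $m \geq 5 > 3/2$ in three spatial dimensions, and the Fa\`a di Bruno bookkeeping in (3) is identical in spirit.
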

\begin{proof}
{
The first statement is straightforward by the definition of $Z^m(R, T)$.
We show the three estimates below.
To prove (1), for fixed $k$ we need to estimate 
\[
\| \partial_t^k  (uv) \|^2_{H^{m-k}} = 
\sum_{|\alpha| \leq m-k} \|\partial_t^k \partial_{x'}^\alpha (uv)\|^2_{L^2}
=\sum_{|\alpha| \leq m-k} 
\|\sum_{\substack{\alpha_1 + \alpha_2 = \alpha \\ k_1 + k_2 = k}}
(\partial_t^{k_1} \partial_{x'}^{\alpha_1} u)
(\partial_t^{k_2} \partial_{x'}^{\alpha_2} v)\|^2_{L^2}.
\]
Note that $\partial_t^{k_1} \partial_{x'}^{\alpha_1} u \in H^{m-k_1 - |\alpha_1|}$ and at most one of $k_j + |\alpha_j|$, $j = 1,2$ is no less than $m-2$, with the assumption $m \geq 5$. 
We can assume $k_1 + |\alpha_1| < m-2$. 
Then by Sobolev Embedding Theorem, we have $\partial_t^{k_1} \partial_{x'}^{\alpha_1} u \in C^{m-k_1 - |\alpha_1|-2}$ 
with 
\[
\|\partial_t^{k_1}  \partial_{x'}^{\alpha_1} u\|_{C^{m-k_1 - |\alpha_1|-2}} \leq C\|\partial_t^{k_1} \partial_{x'}^{\alpha_1} u\|_{H^{m-k_1 - |\alpha_1|}}
\leq C\|\partial_t^{k_1}  u\|_{H^{m-k_1 }}.
\]
This implies 
\[
\| \partial_t^k  (uv) \|^2_{H^{m-k}} 
\leq C \sum_{k_1 + k_2 =k} \|\partial_t^{k_1}  u\|^2_{H^{m-k_1 }} \|\partial_t^{k_2}  v\|^2_{H^{m-k_2 }}
\leq C \| u\|^2_{\zm} \|v\|^2_{\zm}
\]
and therefore we have (1). 
Combining (1) and the first statement $\|u\|_\zmm \leq \|u\|_\zm$, we have the estimate in (2).
For (3), we write $G(t,x') = q(x,u)$ and compute
\[
\partial_t^k \partial_{x'}^{\alpha}  G(t,x')  = \sum_{\substack{k_0 + \ldots + k_l = k\\ \alpha_0 + \ldots + \alpha_l = \alpha}} (\partial_t^{k_0}  \partial_{x'}^{\alpha_0}  \partial_{u}^{m-k_0 - |\alpha_0|} q)(\partial_t^{k_1}  \partial_{x'}^{\alpha_1} u) \ldots (\partial_t^{k_l}  \partial_{x'}^{\alpha_l} u).
\]
Similarly, for each fixed set of indexes $(k_0, \ldots, k_l, \alpha_0, \ldots, \alpha_l)$, at most one of $k_j + |\alpha_j|$ is no less than $m-2$. 
Then by Sobolev Embedding Theorem we have
\begin{align*}
\| (\partial_t^{k_0}  \partial_{x'}^{\alpha_0} \partial_{u}^{m-k_0 - |\alpha_0|} q)(\partial_t^{k_1}  \partial_{x'}^{\alpha_1} u) \ldots (\partial_t^{k_l}  \partial_{x'}^{\alpha_l} u) \|_{L^2}
&\leq C \|q\|_{C^m}\|\partial_t^{k_1}u\|_{H^{m-k_1}} \ldots \|\partial_t^{k_l}u\|_{H^{m-k_l}}\\
&\leq C \|q\|_{C^m}\|u\|_\zm^l.
\end{align*}
This leads to 
$\|q(x, u)\|_\zm \leq C \|q\|_{C^m(M \times \mathbb{R})} (\sum_{l=0}^{m} \|u\|^l_\zm)$. 
}
\end{proof}

For $v \in Z^m(\rho_0, T)$ with $\rho_0$ to be specified later, we consider the linearized problem 
\begin{equation*}
\begin{cases}
\square_g \tilde{u} = - \square_g h  - q_{ij}(x,v + h) \nabla^i_g (v + h) \nabla^j_g (v + h), &\mbox{on } M,\\
\partial_\nu \tilde{u} = 0, &\mbox{on } \partial M,\\
\tilde{u} = 0, &\mbox{for } t <0,
\end{cases}
\end{equation*}
and we define the solution operator $\mathcal{J}$ which maps $v$ to the solution $\tilde{u}$.
By Claim \ref{normineq}
we have 
\begin{align*}
&\| - \square_g h  - q_{ij}(x,v + h) \nabla^i_g (v + h) \nabla^j_g (v + h) \|_{Z^{m-1}} \\
\leq &  \| - \square_g h \|_\zm + C\|q_{ij}(x,v + h)\|_\zm \|\nabla^i_g (v + h)\|_\zmm \|\nabla^j_g (v + h) \|_\zmm )\\
\leq & C( \ep_0 + (1 + (\rho_0 + \ep_0) + \ldots +  (\rho_0 + \ep_0)^m) (\rho_0 + \ep_0)^2 ). 
\end{align*}
By \cite[Theorem 3.1]{Dafermos1985} 
the linearized problem has a unique solution 
\[
\tilde{u} \in \bigcap_{k=0}^{m} C^{k}([0,T]; H^{m-k}(N))
\] 
such that
\[
 \| \tilde{u}\|_{Z^m} \leq C(\ep_0 + (1 + (\rho_0 + \ep_0) + \ldots +  (\rho_0 + \ep_0)^m) (\rho_0 + \ep_0)^2 )e^{KT},
\]
where $C, K$ are positive constants. 
If we assume $\rho_0$ and $\epsilon_0$ are small enough, then the above inequality implies that
\[
\| \tilde{u}\|_{Z^m} \leq C(\ep_0 + (\rho_0 + \ep_0)^2 )e^{KT}.
\] 
For any $\rho_0$ satisfying $\rho_0 < 1/({2C e^{KT}})$, 
we can choose $\ep_0 = {\rho_0}/({8C e^{KT}}) $  such that 
\begin{equation}\label{eps}
C(\ep_0 + (\rho_0 + \ep_0)^2 )e^{KT} < \rho_0.
\end{equation}
In this case, we have  $\mathcal{J}$ maps $Z^m(\rho_0, T)$ to itself. 

In the following we show that $\mathcal{J}$ is a contraction map if $\rho_0$ is small enough. 
It follows that the boundary value problem (\ref{hmNBC}) has a unique solution $\tilde{u} \in Z^m(\rho_0, T)$ as a fixed point of $\mathcal{J}$.
Indeed, for $\tilde{u_j} = \mathcal{J}(v)_j$ with $v_j \in Z^m(\rho_0, T)$, we have $\tilde{u}_2 - \tilde{u}_1$ satisfies 
\begin{align*}
\square_g (\tilde{u}_2 - \tilde{u}_1) &= - (q_{ij}(x,v_2 + h) \nabla^i_g (v_2 + h) \nabla^j_g (v_2 + h) -q_{ij}(x,v_1 + h) \nabla^i_g (v_1 + h) \nabla^j_g (v_1 + h))\\
& = -(\int_0^1 \partial_u q_{ij}(x, h + v_1 + \tau (v_2 - v_1)) \diff \tau )(v_2 - v_1)\nabla^i_g (v_2 + h) \nabla^j_g (v_2 + h)\\
& -q_{ij}(x,v_1 + h)\nabla^i_g (v_2 - v_1) \nabla^j_g (v_2 + h) -q_{ij}(x,v_1 + h)\nabla^i_g (v_1 + h)\nabla^j_g (v_2 -v_1).\\
\end{align*}
We denote the right-hand side by $\mathcal{I}$ and by Claim \ref{normineq} we have the following estimates 
\begin{align*}
\|\mathcal{I}\|_\zmm 
&\leq C\|q_{ij}(x, u)\|_{C^m(M \times \mathbb{R})} 
(\|v_2 - v_1\|_\zm \|\nabla^i_g (v_2 + h)\|_\zmm \|\nabla^j_g (v_2 + h)\|_\zmm \\
& +  \|\nabla^i_g (v_2 - v_1)\|_\zmm \|\nabla^j_g (v_2 + h)\|_\zmm +\|\nabla^i_g (v_1 + h)\|_\zmm \|\nabla^j_g (v_2 -v_1)\|_\zmm)\\
&\leq C\|q_{ij}(x, u)\|_{C^m(M \times \mathbb{R})} 
\|v_2 - v_1\|_\zm ((\rho_0 + \ep_0)^2  + 2(\rho_0 + \ep_0))\\
&\leq C'  \|v_2 - v_1\|_\zm (\rho_0 + \ep_0),
\end{align*}
where last inequality holds if we choose $\rho_0, \ep_0$ small enough. 
By \cite[Theorem 3.1]{Dafermos1985} and (\ref{eps}) one has
\[
\| \tilde{u}_2 - \tilde{u}_1 \|_\zm \leq CC' \|v_2 - v_1\|_\zm (\rho_0 + \ep_0) e^{KT} < C'{C{ e^{KT} }} (1 + 1/(8Ce^{KT}))\rho_0 \|v_2 - v_1\|_\zm.
\]
This implies that if we choose $\rho \leq \frac{1}{C'Ce^{KT}(1 + 1/(8Ce^{KT}))} $ then 
\[
\| \mathcal{J}(v_2 -v_1)\|_\zm < \|v_2 - v_1\|_\zm
\]
shows that $\mathcal{J}$ is a contraction, which proves that there exists a unique solution $\tilde{u}$ to the problem (\ref{hmNBC}).
Furthermore, by \cite[Theorem 3.1]{Dafermos1985} this solution 
satisfies the estimates $\|\tilde{u}\|_\zm \leq 8C e^{KT} \ep_0.$
Therefore, we prove the following proposition. 
\begin{pp}
Let $f \in C^{m+1} ([0,T] \times N)$ with $m \geq 5$. 
Suppose $f = \partial_t f = 0$ at $t=0$. 
Then there exists small positive $\ep_0$ such that for any 
$\|f\|_{C^{m+1} ([0,T] \times N)} \leq \epsilon_0$, we can find a unique solution 
\[
u \in \bigcap_{k=0}^m C^k([0, T]; H_{m-k})
\]
to the boundary value problem (\ref{ihmNBC}), which satisfies the estimate
\[
\|{u}\|_\zm \leq C \|f\|_{C^{m+1}([0,T]\times N)}
\]
for some $C>0$ independent of $f$. 
\end{pp}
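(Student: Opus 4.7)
The plan is to reduce (\ref{ihmNBC}) to a problem with homogeneous Neumann data and then run a Picard iteration / contraction mapping argument on the space $Z^m(\rho_0, T)$. First I would pick a lift $h \in C^{m+1}([0,T]\times N)$ with $\partial_\nu h|_{\partial M} = f$ and $\|h\|_{C^{m+1}} \lesssim \|f\|_{C^{m+1}([0,T]\times \partial N)}$, which exists by a standard extension and is compatible with the vanishing initial condition because $f = \partial_t f = 0$ at $t=0$. Setting $\tilde u = u - h$, the problem becomes (\ref{hmNBC}), which has homogeneous Neumann data and a source that is polynomial in $\tilde u$, $h$ and their gradients with coefficients depending smoothly on $x$ and $\tilde u + h$.

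Next, for $\rho_0, \ep_0 > 0$ to be fixed, I would define a solution map $\mathcal{J}$ sending $v \in Z^m(\rho_0, T)$ to the unique $\tilde u$ solving the linear wave equation with source $F(v) := -\square_g h - q_{ij}(x, v+h)\nabla^i_g(v+h)\nabla^j_g(v+h)$ and homogeneous Neumann data. Applying Claim \ref{normineq} term by term gives an estimate of the shape $\|F(v)\|_\zmm \leq C\bigl(\ep_0 + (1+(\rho_0+\ep_0)+\cdots+(\rho_0+\ep_0)^m)(\rho_0+\ep_0)^2\bigr)$. Feeding this into the linear energy estimate from \cite[Theorem 3.1]{Dafermos1985}, which transfers $Z^{m-1}$ control of the source to $Z^m$ control of the solution with a factor $e^{KT}$, I would then choose $\rho_0$ small (say $\rho_0 < 1/(2Ce^{KT})$) and $\ep_0 \sim \rho_0/(8Ce^{KT})$ so that $\|\mathcal{J}(v)\|_\zm < \rho_0$, i.e.\ $\mathcal{J}$ maps $Z^m(\rho_0, T)$ into itself.

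To produce and characterize the fixed point I would verify that $\mathcal{J}$ is a contraction. Writing $\tilde u_j = \mathcal{J}(v_j)$, the difference $\tilde u_2 - \tilde u_1$ solves a linear wave equation whose source is obtained by expanding $q_{ij}(x, v_j + h)$ via the mean value theorem in the $u$-argument and grouping the gradient factors, producing a sum of terms each containing one factor of $v_2 - v_1$ or $\nabla_g(v_2 - v_1)$ and two factors from $\{v_i + h\}$. Using Claim \ref{normineq} these are bounded in $\zmm$ by $C'(\rho_0 + \ep_0)\|v_2 - v_1\|_\zm$, and the energy estimate gives $\|\mathcal{J}(v_2) - \mathcal{J}(v_1)\|_\zm \leq C'Ce^{KT}(\rho_0 + \ep_0)\|v_2 - v_1\|_\zm$. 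Shrinking $\rho_0$ and $\ep_0$ further makes this constant strictly less than $1$, so by the Banach fixed point theorem there is a unique $\tilde u \in Z^m(\rho_0, T)$ with $\mathcal{J}(\tilde u) = \tilde u$; the original solution is $u = \tilde u + h$ and the linear estimate applied one last time yields $\|u\|_\zm \leq C\|f\|_{C^{m+1}([0,T]\times N)}$.

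The main delicate point is threading the two smallness parameters $\rho_0$ and $\ep_0$ so that the self-map property and the contraction property close simultaneously, and doing so inside the algebra $Z^m$ whose multiplicative behavior requires $m\geq 5$ for Sobolev embedding to give $L^\infty$ control of all but at most one factor. The quadratic-in-gradient structure of $w$ also means $F(v)$ naturally lives only in $\zmm$, so the use of a linear energy estimate with one-derivative loss (as in \cite[Theorem 3.1]{Dafermos1985}), rather than a plain $Z^m \to Z^m$ bound, is essential for the argument to produce a $Z^m$ solution.
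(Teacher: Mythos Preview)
Your proposal is correct and follows essentially the same route as the paper: lift the Neumann data by $h$, define the Picard map $\mathcal{J}$ on $Z^m(\rho_0,T)$, bound the source in $Z^{m-1}$ via Claim~\ref{normineq}, invoke the linear energy estimate of \cite[Theorem~3.1]{Dafermos1985}, and choose $\rho_0, \ep_0$ exactly as you indicate to get both the self-map and contraction properties. Your closing remarks on why $m\ge 5$ is needed and why the $Z^{m-1}\to Z^m$ gain in the linear estimate is essential are also on point.
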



\section{Preliminaries and the asymptotic expansion}\label{Sec_prelim}
Let $(M,g)$ be a globally hyperbolic Lorentzian manifold with timelike and 
null-convex boundary. 
As in \cite{Hintz2020}, we extend the metric $g$ smoothly to a slightly larger open manifold $\tM  = \mathbb{R}_t \times \widetilde{N}$, 
such that $N$ is contained in the interior of $\widetilde{N}$.
Suppose we are given an open set $V$, the observation set,  in $[0,T] \times \widetilde{N} \setminus N$.
For more details of this extension and the construction of $V$, see Section \ref{sec_recovery}.

In this section, we recall some preliminaries in \cite{Kurylev2018}.
Then we construct the distorted planes waves and carry out the asymptotic analysis of the solution to (\ref{ihmNBC}).
In the last subsection, we recall the propagation of singularities for the wave equation on a manifold with boundary and use these results to understand the singularities of the nonlinear interaction of waves in Section \ref{Sec_threewaves} and Section \ref{Sec_fourwaves}.

\subsection{Notations}
For $\eta \in T_p^*\tM$, the corresponding vector of $\eta$  is denoted by $ \eta^\# \in T_p \tM$.
The corresponding covector of a vector $\xi \in T_p \tM$ is denoted by $ \xi^b \in T^*_p \tM$.
We denote by 
\[
L_p \tM = \{\zeta \in T_p \tM \setminus 0: \  g(\zeta, \zeta) = 0\}
\]
the set of light-like vectors at $p \in \tM$ and, similarly by $L^*_p \tM$ the set of light-like covectors.
The sets of future (or past) light-like vectors are denoted by $L^+_p \tM$ (or $L^-_p \tM$), and those of future (or past) light-like covectors are denoted by $L^{*,+}_p \tM$ (or $L^{*,-}_p \tM$).

The null bicharacteristic set $\Char(\square_g)$ is the set $p^{-1}(0) \subset T^*M$, where 
$
p(x, \zeta) =g^{ij}\zeta_i \zeta_j
$
is the principal symbol of $\square_g$. 
It is also the set of light-like covectors with respect to $g$.  
We denote by $\Theta_{x, \zeta}$ the null bicharacteristic of $\square_g$ that contains $(x, \zeta) \in L^*\tM$. 
Then a covector $(y, \eta) \in \Theta_{x, \zeta}$ if and only if there is a light-like geodesic $\gamma_{x, \zeta^\#}$ such that 
\[
(y, \eta) = (\gamma_{x, \zeta^\#}(t), (\dot{\gamma}_{x, \zeta^\#}(t))^b), \ \text{ for } t \in \mathbb{R}.
\]
We denote by $\Theta^+_{x, \zeta}$ the set of such covector $(y, \eta)$ with $t>0$ above, i.e. if $(y, \eta)$ lies in the forward bicharacteristic starting from $(x,\zeta)$. 
The future directed light-cone emanating from $p$ is defined as 
\[
\mathcal{L}^+(p) = \{\gamma_{p, \zeta} (t) \in \tM: \zeta \in L_p^+ \tM, t \geq 0\}.
\]
The time separation function $\tau(x,y) \in [0, \infty)$ between two points $x < y$ in  $\tM$
is the supremum of the lengths \[
L(\alpha) =  \int_0^1 \sqrt{-g(\dot{\alpha}(s), \dot{\alpha}(s))} ds
\] of 
the piecewise smooth causal paths $\alpha: [0,1] \rightarrow M$ from $x$ to $y$. 
If $x<y$ is not true, we define $\tau(x,y) = 0$. 
Note that $\tau(x,y)$ satisfies the reverse triangle inequality
\[
\tau(x,y) +\tau(y,z) \leq \tau(x,z), \text{ where } x \leq y \leq z.
\]
For $(x,\xi) \in L^+M$, recall the cut locus function
\[
\rho(x,\xi) = \sup \{ s\in [0, \mathcal{T}(x,\xi)]:\ \tau(x, \gamma_{x,\xi}(s)) = 0 \},
\]
where $\mathcal{T}(x,\xi)$ is the maximal time such that $\gamma_{x,\xi}(s)$ is defined.

The light observation set of a point $q \in M$ in a set $W$ is 
\begin{align*}
\mathcal{P}_W (q) = \mathcal{L}^+(q) \cap W, 
\end{align*}
and the earliest light observation set $\EWq$ in a set $W$ of $q \in M$  is
\begin{align}\label{def_eob}
\EWq = & \{x \in \mathcal{P}_W (q): \text{ there are no }y \in \mathcal{P}_W (q) 
\text{ and a future-pointing }  \\
& \text{ time-like path } \alpha:[0,1] \rightarrow W \text{ with } \alpha(0) = y \text{ and } \alpha(1) = x\}. \nonumber
\end{align}
The direction set of $q$ in a set $W$ is 
\begin{align*}
\mathcal{C}_W(q) = \{
(y, \eta) \in L^+W: \ &y = \gamma_{q,\xi}(t), \eta = \dot{\gamma}_{q,\xi}(t), \\
& \text{for some } \xi \in L^+_q M, \ 0 \leq t \leq \rho(x,\xi)\},
\end{align*}
and the regular direction set of $q$ in a set $W$ is 
\begin{align*}
\mathcal{C}^{\text{reg}}_W(q) = \{
(y, \eta) \in L^+W: \ &y = \gamma_{q,\xi}(t), \eta = \dot{\gamma}_{q,\xi}(t), \\
& \text{for some } \xi \in L^+_q M, \ 0 < t < \rho(x,\xi)\}.
\end{align*}
It is shown in \cite{Kurylev2018} that $\EWq = \pi(\mathcal{C}_W(q))$ and 
one can define the regular earliest light observation set of $q$ as $\EWqr = \pi(\mathcal{C}^{\text{reg}}_W(q))$.
The lower semicontinuity of $\rho(x,\xi)$ implies that $\EWqr \subset W$ is a smooth submanifold of dimension $3$. 

Suppose $\Lambda$ is a conic Lagrangian submanifold in $T^*\tM$ away from the zero section. 
We denote by $I^\mu(\Lambda)$ the set of Lagrangian distributions in $\tM$ associated with $\Lambda$ of order $\mu$. 
In local coordinates, a Lagrangian distribution can be written as an oscillatory integral and we regard its principal symbol, 
which is invariantly defined on $\Lambda$ with values in the half density bundle tensored with the Maslov bundle, as a function in the cotangent bundle. 

If $\Lambda$ is a conormal bundle of a submanifold $K$ of $\tM$, i.e. $\Lambda = N^*K$, then such distributions are also called conormal distributions. 
The space of distributions in $\tM$ associated with two cleanly intersecting conic Lagrangian manifolds $\Lambda_0, \Lambda_1 \subset T^*\tM \setminus 0$ is denoted by $I^{p,l}(\Lambda_0, \Lambda_1)$. 
If $u \in I^{p,l}(\Lambda_0, \Lambda_1)$, then one has $\wfset{(u)} \subset \Lambda_0 \cup \Lambda_1$ and 
\[
 u \in I^{p+l}(\Lambda_0 \setminus \Lambda_1), \quad  u \in I^{p}(\Lambda_1 \setminus \Lambda_0)
\]
away from their intersection $\Lambda_0 \cap \Lambda_1$. The principal symbol of $u$ on $\Lambda_0$  and  $\Lambda_1$ can be defined accordingly and they satisfy some compatible conditions on the intersection. 

For more detailed introduction to Lagrangian distributions and paired Lagrangian distributions, see \cite[Section 3.2]{Kurylev2018} and \cite[Section 2.2]{Lassas2018}. 
The main reference are \cite{MR2304165, Hoermander2009} for conormal and Lagrangian distributions and
\cite{Hoop2015,Greenleaf1990,Greenleaf1993,Melrose1979} for paired Lagrangian distributions. 

In particular, the causal inverse $\tQ_g$ of the wave operator $\square_g$ in $\tM$ is a paired Lagrangian distribution by \cite{Melrose1979} (see also \cite{Hoop2015, Baer2007, Greenleaf1993}).
Here $\tQ_g$ is the solution operator in the microlocal sense to the linear wave equation
\begin{align*}
\square_g v &= f, \quad \text{ on } \tM,\\
v & = 0, \quad \text{ on } \tM \setminus J^+(\supp(f)).
\end{align*}
If 
additionally we assume $\Lambda$ intersects $\Char(\square_g)$ transversally such that its intersection with each bicharacteristics has finite many times, then 
\[
\tQ_g: I^\mu(\Lambda) \rightarrow I^{p,l}(\Lambda, \Lambda^g),
\]
where $ \Lambda^g$ is the flow out of $\Lambda \cap \Char(\square_g)$ under the Hamiltonian flow of $\square_g$, according to \cite[Proposition 2.1]{Greenleaf1993}, 
see also \cite[Proposition 2.1]{Lassas2018} for more details.

\subsection{Distorted Plane Waves}\label{subsec_plane}
We briefly review the distorted plane waves defined in \cite{Kurylev2018} (see also \cite{Lassas2018, Wang2019, Hintz2020}).
Roughly speaking, before the first conjugate point they are conormal distributions propagating along the fixed null geodesic.

Let $g^+$ be a Riemannian metric on $\tM$.
For $x_0 \in \tM \setminus M$, $\xi_0 \in L^+_{x_0} \tM$, and a small parameter $s_0 >0$, 
we define 
\begin{align*}
\mathcal{W}({x_0, \xi_0, s_0}) &= \{\eta \in L^+_{x_0} \tM: \|\eta - \xi_0\|_{g^+} < s_0\}
\end{align*}
as a neighborhood of $\xi_0$ at the point $x_0$ and 
\begin{align*}
K({x_0, \xi_0, s_0}) &= \{\gamma_{x_0, \eta}(s) \in \tM: \eta \in \mathcal{W}({x_0, \xi_0, s_0}), s\in (0, \infty) \},
\end{align*}
as the subset of the light cone emanating from $x_0$ by light-like vectors in $\mathcal{W}({x_0, \xi_0, s_0})$. 
As $s_0$ goes to zero, the surface $K({x_0, \xi_0, s_0})$ tends to the geodesic $\gamma_{x_0, \xi_0}((0, \infty) )$. 
We define
\begin{align*}
\Lambda({x_0, \xi_0, s_0})
 = &\{(\gamma_{x_0, \eta}(s), r\dot{\gamma}_{x_0, \eta}(s)^b )\in \tM: \\
 & \quad \quad \eta \in \mathcal{W}({x_0, \xi_0, s_0}), s\in (0, \infty), r \in \mathbb{R}\setminus \{0 \} \}
\end{align*}
as the flow out from $\{(x_0, r \eta^b )\in T^*M: \eta \in \mathcal{W}({x_0, \xi_0, s_0}) \}$ by the Hamiltonian vector field of $\square_g$ in the future direction. 
Note that $\Lambda({x_0, \xi_0, s_0})$ is the conormal bundle of $K({x_0, \xi_0, s_0})$ near $\gamma_{x_0, \xi_0}$ before the first conjugate point of $x_0$. 

Now let 
\[
x_j \in V \text{ with } (x_j, \xi_j) \in L^+\tM, \quad j=1,2,3,4
\] 
be four lightlike covectors 
and we denote this quadruplet by $(\vec{x}, \vec{\xi})$.
Note the null geodesic $\gamma_{x_j, \xi_j}(t)$ starting from $x_j \in V$ could never intersect $M$ or could enter $M$ more than once. 
Thus, we define
\begin{align}\label{def_bpep}
t_j^0  = \inf\{s > 0 : \  \gamma_{x_j, \xi_j}(s) \in M \}, \quad t_j^b  = \inf\{s > t_j^0 : \  \gamma_{x_j, \xi_j}(s) \in \tM \setminus M \}
\end{align}
as the first time when it enters $M$ and 
the first time when it leaves $M$ from inside, 
if such limits exist. 
In the following, we focus on  $(\vec{x}, \vec{\xi})$ 
that satisfies  
\begin{enumerate}[\hspace{0.5cm}(\ref{assp_causalindep}-a)] \label{assp_causalindep}
    \item 
    each $\gamma_{x_j, \xi_j}(t)$ hits $\partial M$ transversally at $t_j^0$,
    \item the intersection points of the four geodesics on $\partial M$ at $t_j^0$ are causally independent; more strictly we require
    \begin{align*}
    \gamma_{x_j, \xi_j}(t_j^0) \notin J^+(x_k), \quad 1 \leq j \neq k \leq 4.
    \end{align*}
\end{enumerate}
We emphasize that these two assumptions are open conditions.
Given $(\vec{x}, \vec{\xi})$ satisfying them, we can find small perturbation $(\vec{x}', \vec{\xi}')$ such that they are true for $(\vec{x}', \vec{\xi}')$. 
Roughly speaking, the second assumption helps us to avoid the complications caused by the interaction of the distorted plane waves at boundary points, see Lemma \ref{lm_intM} and \ref{lm_w1}. 
On the other hand, by \cite[Propositon 2.4]{Hintz2017}, 
if there a null geodesic $\gamma_{x_j, \xi_j}$ passing a point $q \in \intM$,
then it 
must hits $\partial M$ transversally at some point before $q$, 
since $\partial M$ is (strictly) null-convex. 

\begin{figure}[h]
    \centering
    \includegraphics[height=0.25\textwidth]{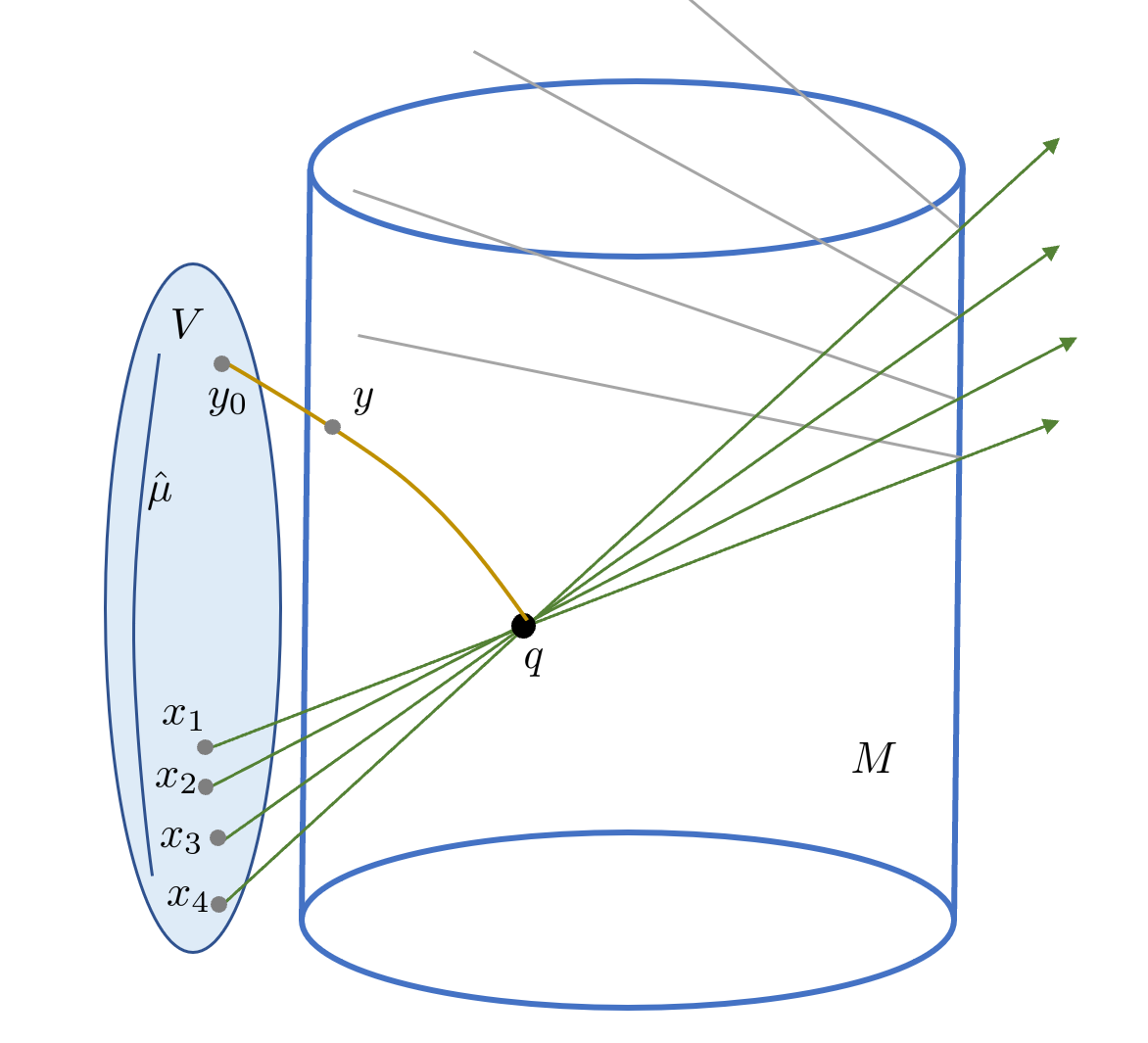}
    \caption{The interaction of four distorted plane waves at $q$. Distorted plane waves satisfying (\ref{assp_causalindep}) are sent from the observation set $V$, the light blue region. The interaction produces new singularities at $q$ propagating to $\yu$ and these singularities can be observed at $y \in \partial M$. }
    \label{fig_setting}
\end{figure}

Next, according to \cite[Lemma 3.1]{Kurylev2018}, 
we can construct distributions 
\[
u_j \in I^\mu(\Lambda(x_j, \xi_j, s_0)) \text{ satisfying } \square_g u_j \in C^\infty(M)
\]
with nonzero principal symbol along $(\gamma_{x_j, \xi_j}(s), \dot{\gamma}_{x_j, \xi_j}(s)^b )$. 
Note that $u_j \in \mathcal{D}'(\tM)$ has no singularities conormal to $\partial M$. 
Thus its restriction to the submanifold $\partial M$ is a well-defined, see \cite[Corollary 8.2.7]{Hoermander2003}.
Let $f_j = \partial_\nu u_j|_{\partial M}$ and $v_j$ solve the boundary value problem
\begin{equation}\label{eq_v1}
\begin{aligned}
\square_g v_j &= 0, & \  & \mbox{on } M ,\\
\partial_\nu v_j &= f_j, & \ &\mbox{on } \partial M,\\
v_j &= 0,  & \  &\mbox{for } t <0.
\end{aligned}
\end{equation}
It follows that $v_j  = u_j \mod C^\infty(M)$. 
We consider the boundary value problem (\ref{ihmNBC}) for the semilinear wave equation with the Neumann boundary condition 
$
f = \sum_{j=1}^4 \ep_j f_j, 
$
and write the solution $u$ to (\ref{ihmNBC}) as an asymptotic expansion with respect to $v_j$ in Section \ref{Sec_aexpan}. 


For convenience, we sometimes denote  $\gamma_{x_j, \xi_j}$ by $\gamma_j$, $K({x_j, \xi_j, s_0})$  by $K_j$, and $\Lambda({x_j, \xi_j, s_0})$ by $\Lambda_j$ if there are no confusion. 
To analyze the wave front set of the interactions of distorted plane waves, we introduce the notations
\[
\Lambda_{ij} = N^*(K_i \cap K_j), \quad \Lambda_{ijk} =  N^*(K_i \cap K_j \cap K_k), \quad \Lambda_q = T^*_q M \setminus 0,
\] 
if such $q$ as the intersection point in $\capgamma$ exists. 
We define
\[
\Lambda^{(1)} = \cup_{j=1}^4 \Lambda_j, \quad \Lambda^{(2)} = \cup_{i<j} \Lambda_{ij}, \quad 
\Lambda^{(3)} =  \cup_{i<j<k} \Lambda_{ijk}.
\]
Then we denote the flow out of $\Lambda^{(3)} \cap \Char(\square_g)$ under the null bicharacteristics of $\square_g$ in $T^*\tM$
by  
\[
\Lambda^{(3), g} = \{(z, \zeta) \in T^*\tM; \ \exists \ (y, \eta) \in \Lambda^{(3)}\cap \Char(\square_g) \text{ such that } (z, \zeta) \in \Theta_{y, \eta} \}.
\] 
According to \cite{Kurylev2018}, the Hausdorff dimension of $ \pi(\Lambda^{(3), g})$ is at most $2$, where $\pi: T^*\tM \rightarrow \tM$ is the natural projection.  
The flow out of $\Lambda^{(3)} \cap \Char(\square_g)$ under the broken bicharacteristic arcs of $\square_g$ in $T^*M$ is denoted by 
\[
\Lambda^{(3), b} = \{(z, \zeta) \in T^*M; \ \exists \ (y, \eta) \in \Lambda^{(3)} \text{ such that } (z, \zeta) \in \Theta^b_{y, \eta}
\},
\]
see Section \ref{subsec_Qg} for the broken characteristics and the definition of $\Theta^b_{y, \eta}$. 
Basically, we can regard $\pi(\Lambda^{(3), b})$ as a union of $\pi(\Lambda^{(3), g})$ and its reflection part, which implies the Hausdorff dimension of is $\pi(\Lambda^{(3), b})$ at most $2$ as well. 
Moreover, we define
\begin{align}\label{def_Lambda3bp}
\Lambda^{(3), b, +} = \{(z, \zeta) \in T^*\tM; \ \exists \ (y, \eta) \in \Lambda^{(3),b} \text{ such that } (z, \zeta) \in \Theta^+_{y, \eta}
\}
\end{align}
by extending the broken bicharacteristic arcs to $T^*\tM$ in the forward direction. 
Let 
\begin{align*}
\Gamma({\vec{x}, \vec{\xi}}, s_0) =  \Lambda^{(1)} \cup \Lambda^{(2)} \cup \Lambda^{(3)} \cup \Lambda^{(3),b, +},
\end{align*}
which depends on the parameter $s_0$ by definition. 
Then we define
\begin{align}\label{def_Gamma}
\unionGamma = \bigcap_{s_0>0}\Gamma({\vec{x}, \vec{\xi}}, s_0)
\end{align}
as the set containing all possible singularities caused by the Hamiltonian flow 
and the interactions of at most three distorted plane waves. 
Its projection $\pi(\unionGamma )$ on $\tM$  is relatively small (with Hausdorff dimension at most $2$) compared to the singularities caused by the interaction of four waves. 

As in \cite{Kurylev2018}, to deal with the complications caused by the cut points, we consider the interactions in the set
\begin{align}\label{def_nxxi}
\nxxi  = \tM \setminus \bigcup_{j=1}^4 J^+(\gamma_{x_j, \xi_j}(\rho(x_j, \xi_j))),
\end{align}
which is the complement of the causal future of the first cut points. 
To deal with the complications caused by the reflection part, for $j = 1, \ldots, 4$, we define
\begin{align}\label{def_ntxxi}
\ntxxi  = \tM \setminus \bigcup_{j=1}^4 J^+(\gamma_{x_j, \xi_j}(t_j^b)),
\end{align}
as the complement of the causal future of the point at $t=t_j^b$, see (\ref{def_bpep}), where $\gamma_{x_j, \xi_j}$ leaves $M$ from the inside for the first time. 
Note that in $\ntxxi \cap \nxxi$, each null geodesic $\gamma_{x_j, \xi_j}$ enters $M$ at most once and any two of them intersect at most once.
Moreover, in this open set we can show that all intersections happen in $\intM$ 
when $s_0$ is small enough.
Here $\intM$ is the interior of $M$.
\begin{lm}\label{lm_intM}
Suppose $(p, \zeta) \in \Lambda^{(2)} \cup \Lambda^{(3)}$ with arbitrarily small $s_0$.
If  $p \in \ntxxi$,  then we must have $p \in \intM$. 
\end{lm}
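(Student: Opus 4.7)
My plan is to reduce the statement to the limiting geodesic configuration $s_0 = 0$ and then carry out a direct case analysis on the parameter $s_i$. The cone surface $K(x_i, \xi_i, s_0)$ shrinks monotonically to the null geodesic $\gamma_{x_i, \xi_i}((0,\infty))$ as $s_0 \downarrow 0$, so the hypothesis that $(p,\zeta) \in \Lambda^{(2)} \cup \Lambda^{(3)}$ for arbitrarily small $s_0$ forces $p$ to lie on at least two of the actual geodesics $\gamma_{x_j, \xi_j}$. Write $p = \gamma_{x_i, \xi_i}(s_i) = \gamma_{x_j, \xi_j}(s_j)$ with $s_i, s_j > 0$ (the triple-intersection case $p \in K_i \cap K_j \cap K_k$ is identical). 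The goal is to prove $s_i \in (t_i^0, t_i^b)$: once this is established, transversality from assumption (\ref{assp_causalindep}-a) together with strict null-convexity forces $\gamma_{x_i, \xi_i}$ to stay strictly inside $M$ on this open interval, since a null geodesic entering $M$ transversally cannot tangentially touch $\partial M$ from the interior before its first exit.

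The range $s_i \leq t_i^0$ is ruled out by causal independence (\ref{assp_causalindep}-b). Indeed, $p$ lies on the future-directed null geodesic $\gamma_{x_j, \xi_j}$ issued from $x_j$, so $p \in J^+(x_j)$; if $s_i \leq t_i^0$, then the entry point $\gamma_{x_i, \xi_i}(t_i^0)$ lies in $J^+(p)$ along $\gamma_{x_i, \xi_i}$, and transitivity yields $\gamma_{x_i, \xi_i}(t_i^0) \in J^+(x_j)$, contradicting (\ref{assp_causalindep}-b). The range $s_i \geq t_i^b$ is ruled out by the hypothesis $p \in \ntxxi$: in this range $p$ lies at or past $\gamma_{x_i, \xi_i}(t_i^b)$ on the same null geodesic, so $p \in J^+(\gamma_{x_i, \xi_i}(t_i^b))$, directly contradicting $p \notin \bigcup_{k} J^+(\gamma_{x_k, \xi_k}(t_k^b))$. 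Together these two exclusions pin down $s_i \in (t_i^0, t_i^b)$, and hence $p \in \intM$.

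The only conceptual step is the limiting reduction to the exact geodesics, which follows from the monotone inclusion $K_i(s_0) \subset K_i(s_0')$ for $s_0 < s_0'$; the rest is a direct application of the two assumptions (\ref{assp_causalindep}-a), (\ref{assp_causalindep}-b), strict null-convexity, and the definition of $\ntxxi$. I do not anticipate a serious obstacle; the main care point is justifying that $\gamma_{x_i, \xi_i}$ remains in $\intM$ on the open interval $(t_i^0, t_i^b)$ between its first two transversal crossings of $\partial M$, which follows from strict null-convexity (as used in Section \ref{subsec_plane} via \cite[Proposition 2.4]{Hintz2017}) since it prevents a null geodesic in the interior from tangentially touching $\partial M$ between transversal crossings.
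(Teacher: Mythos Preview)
Your proof is correct and follows essentially the same approach as the paper: reduce to the exact geodesic intersection via the limit $s_0 \downarrow 0$, then use causal independence (\ref{assp_causalindep}-b) to exclude $s_i \leq t_i^0$ and the definition of $\ntxxi$ to exclude $s_i \geq t_i^b$. The paper phrases this as a contradiction argument (assume $p \notin \intM$, hence $s_i \leq t_i^0$ in $\ntxxi$, then derive a violation of (\ref{assp_causalindep}-b)), while you run it directly and are slightly more explicit about why $\gamma_{x_i,\xi_i}((t_i^0,t_i^b)) \subset \intM$ via strict null-convexity --- a point the paper uses implicitly.
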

\begin{proof}
We prove by contradiction. 
Assume there is $p \in \tM \setminus \intM$ 
such that $(p, \zeta) \in \Lambda^{(2)} \cup \Lambda^{(3)}$. 
Then we can find $K_i, K_j$ such that $p$ is in their intersection. 
If $p \notin \gamma_{x_i, \xi_i}(\mathbb{R}_+) \cap \gamma_{x_j, \xi_j}(\mathbb{R}_+) $, then by choosing small enough $s_0$ one has $p \notin K_i \cap K_j$. 
This implies $p = \gamma_{x_i, \xi_i}(s_i) =  \gamma_{x_j, \xi_j}(s_j)$ for some $s_i, s_j>0$. 
In $\ntxxi$, we must have 
$s_i \leq t_i^0$, where recall $t_i^0$ is the earliest time when $\gamma_{x_i, \xi_i}$ hits the boundary.
Since $p \in \gamma_{x_j, \xi_j}$, we have
\[
x_j \leq p  = \gamma_{x_i, \xi_i}(s_i)  \leq \gamma_{x_i, \xi_i}(t_i^0),
\]
which implies that $\gamma_{x_i, \xi_i}(t_i^0)$ is contained in the casual future of $(x_j, \xi_j)$.
This contradicts with the assumption (\ref{assp_causalindep}). 
%
\end{proof}
Recall $V$ is the given observation set in $\tM \setminus M$. 
The following lemma shows a covector $(y, \eta)$ with $y \in \partial M$ belongs to $\nxxi$ (or $ \ntxxi$), if this is true for a covector in $V$ given by the forward null bicharacteristics. 
\begin{lm}\label{connecty}
    Let $y \in \pmt$ and $\eta \in L_y^{*,+} \tM$ such that $\yu = \gamma_{y, \eta^\#}(t_0), \etau = (\dot{\gamma}_{y, \eta^\#}(t_0))^b$ with $\yu \in V$ for some $t_0 > 0$. 
    We have following statements.  
    \begin{enumerate}[(1)]
        \item If $\yu \in \nxxi$, then $y \in \nxxi$.
        \item If $\yu \in \ntxxi$, then $y \in \ntxxi$.
        \item Suppose $\yu \in \ntxxi$. If $(\yu, \etau) \notin \unionGamma$, then $(y, \eta) \notin \unionGamma$.
    \end{enumerate}
    \begin{proof}
        We prove the first two statements by contradiction arguments.  
        For (1), assume 
        $y \notin \nxxi$, there exists some $x_j$ such that $y \in J^+(\gamma_{x_j, \xi_j}(\rho(x_j, \xi_j)))$. 
        Since 
        $\eta^\# \in L_y^+ \tM$ and $\yu = \gamma_{y, \eta^\#}(t_0)$, we have $y_0 \in J^+(y)$. 
        Notice the casual relation $\leq$ is transitive. It follows that $\yu \in J^+(\gamma_{x_j, \xi_j}(\rho(x_j, \xi_j)))$, which contradicts with the assumption  $\yu \in \nxxi$. 
        The same arguments proves (2). 
        
        For (3), we show $(y, \eta) \in \unionGamma$ implies that $(\yu, \etau) \in \unionGamma$.
        Indeed, with $y \in \partial M$,  by Lemma \ref{lm_intM} one has $(y,\eta) \notin \Lambda^{(3)}$.
        If $(y, \eta) \in \unionGamma$, then we have 
        $(y, \eta)$ belongs to either  $\Lambda^{(1)}$ or $\Lambda^{(3),b,+}$, 
        To show $(y, \eta) \in \Lambda^{(1)}$ implies that $(\yu,\etau) \in \Lambda^{(1)}$ for small enough $s_0$, we can extend $\gamma_{x_j, \xi_j}$ if needed. 
        Then note that $(y,\eta) \in T^*M$ belongs to $\Lambda^{(3), b, +}$ exactly when it is in $\Lambda^{(3), b}$. 
        By its definition in (\ref{def_Lambda3bp}), we have 
        $(\yu, \etau)\in \Theta_{y, \eta} \subset \Lambda^{(3), b,+}$. 
    \end{proof}
\end{lm}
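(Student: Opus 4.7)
My plan is to argue by contrapositive in all three parts, exploiting the fact that $\yu$ lies in the causal future of $y$ along a future-pointing null geodesic, so by transitivity of the causal relation $\leq$, any forward causal set containing $y$ automatically contains $\yu$.

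For part (1), if $y \notin \nxxi$ then $y \in J^+(\gamma_{x_j,\xi_j}(\rho(x_j,\xi_j)))$ for some $j$. Since $\eta^\#$ is future-pointing null and $\yu=\gamma_{y,\eta^\#}(t_0)$ with $t_0>0$, we have $y \leq \yu$, so transitivity of $\leq$ yields $\yu \in J^+(\gamma_{x_j,\xi_j}(\rho(x_j,\xi_j)))$, contradicting $\yu \in \nxxi$. Part (2) is identical with $t_j^b$ in place of $\rho(x_j,\xi_j)$, using definition (\ref{def_ntxxi}).

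For part (3) I would show the contrapositive: if $(y,\eta) \in \unionGamma$ then $(\yu,\etau) \in \unionGamma$. Combining parts (1) and (2) with the hypothesis $\yu \in \ntxxi$ gives $y \in \ntxxi$ (and also $y \in \nxxi$ is automatic since $\ntxxi \subset \nxxi$ when the relevant $t_j^b \leq \rho(x_j,\xi_j)$; otherwise one argues directly). Since $y \in \partial M$, Lemma \ref{lm_intM} excludes $(y,\eta) \in \Lambda^{(2)}\cup\Lambda^{(3)}$ for arbitrarily small $s_0$. So $(y,\eta)$ must lie in $\Lambda^{(1)}$ or $\Lambda^{(3),b,+}$. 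If $(y,\eta)\in\Lambda^{(1)}$, then by its flowout definition $(y,\eta)$ arises from some covector at $x_j$ via the Hamiltonian flow; advancing that same null bicharacteristic a further time $t_0$ (extending $\gamma_{x_j,\xi_j}$ into $\tM$ if necessary, which is permitted for sufficiently small $s_0$ since the extension stays in $\tM$) puts $(\yu,\etau)$ in $\Lambda^{(1)}$. If $(y,\eta) \in \Lambda^{(3),b,+}$, then by definition (\ref{def_Lambda3bp}) this set is closed under forward null bicharacteristic flow in $T^*\tM$, since $\Theta^+$ itself is defined as the forward bicharacteristic extension; concatenating the two forward flow segments gives $(\yu,\etau) \in \Lambda^{(3),b,+}$.

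The main obstacle I expect is the bookkeeping in the $\Lambda^{(3),b,+}$ case, where one must verify that an element which is already a forward extension of a broken bicharacteristic remains in the set after a further forward flow of time $t_0$. This is morally just the semigroup property of the bicharacteristic flow in $T^*\tM$, but one should check that no reflection off $\partial M$ is missed along the way; here the condition $\yu \in \ntxxi$ (which ensures $\gamma_{y,\eta^\#}$ does not re-enter regions where further reflections of the original $\gamma_{x_j,\xi_j}$ could occur) is what makes the argument clean. The $\Lambda^{(1)}$ case requires only the mild observation that extending a geodesic forward in the open extended manifold is always possible locally, which is why the parameter $s_0$ can be taken small.
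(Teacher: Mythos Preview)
Your proof is correct and follows essentially the same route as the paper: contrapositive via transitivity of the causal relation for (1) and (2), then for (3) use Lemma~\ref{lm_intM} (which needs only $y\in\ntxxi$, obtained from part (2)) to rule out $\Lambda^{(2)}\cup\Lambda^{(3)}$ and treat $\Lambda^{(1)}$ and $\Lambda^{(3),b,+}$ by forward flow. One small clarification: your closing worry about missed reflections is unnecessary, since $\Lambda^{(3),b,+}$ is defined via $\Theta^+$, the \emph{unbroken} null bicharacteristic flow in $T^*\tM$, so closure under further forward flow is immediate from the semigroup property and does not rely on $\yu\in\ntxxi$; that hypothesis is used only to invoke Lemma~\ref{lm_intM}.
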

Furthermore, we prove the following lemma that describes the connection between the set $\ntxxi$ and the earliest observation set. 
\begin{lm}\label{lm_ntxxi}
    Let $(\capgamma) \cap \nxxi = \{ q \}$ and let $U$ be an open set in $\tM \setminus M$ containing $\yu$. 
    Suppose $y \in \pmt$ and $\eta^\# \in L_y^+ \tM$ such that $\yu = \gamma_{y, \eta^\#}(t_0), \etau = (\dot{\gamma}_{y, \eta^\#}(t_0))^b$ for some $t_0 > 0$. 
    Then the following statements are true.
    \begin{enumerate}[(1)]
        \item Assume $\yu \notin \uniongamma$. If $\yu \notin \ntxxi$, then $\yu \notin \mathcal{E}_U(q)$. 
        \item We have $J^+(\gamma_j(t_j^b)) \subset J^+(q)$, for $j = 1,2,3,4$. 
    \end{enumerate}
\end{lm}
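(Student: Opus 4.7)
The plan for part~(2) is brief. Since $q \in \capgamma \cap \nxxi$, Lemma~\ref{lm_intM} places $q \in \intM$; writing $q = \gamma_j(s_j)$, the definition of $t_j^b$ as the first exit time of $\gamma_j$ from $M$ and the fact that $\gamma_j(t_j^b) \in \partial M$ force $s_j \in (t_j^0, t_j^b)$. Hence $\gamma_j|_{[s_j, t_j^b]}$ is a future-directed null geodesic segment from $q$ to $\gamma_j(t_j^b)$, realizing $q \leq \gamma_j(t_j^b)$, and transitivity of the causal relation immediately yields $J^+(\gamma_j(t_j^b)) \subset J^+(q)$.

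For part~(1) I argue by contradiction: suppose $\yu \in \EUq$. Then $\yu \in \mathcal{L}^+(q) \cap U$, so some future null geodesic $\gamma^*$ from $q$ reaches $\yu$. The first step upgrades the implicit causal relation to a strict chronological one. Because $\yu \notin \ntxxi$, pick $j$ with $\yu \in J^+(p_j)$, where $p_j := \gamma_j(t_j^b)$, and let $\sigma$ be a future causal curve from $p_j$ to $\yu$. Concatenating the null segment $\gamma_j|_{[s_j, t_j^b]}$ with $\sigma$ produces a future causal curve from $q$ to $\yu$. Were this concatenation a single smooth null geodesic, $\sigma$ would be the forward continuation of $\gamma_j$ past $p_j$ in the same null direction, putting $\yu \in \gamma_j((0,\infty))$ and contradicting $\yu \notin \uniongamma$. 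The concatenation is therefore a causal curve that fails to be a null geodesic, and the classical Lorentzian result that every such causally related pair is chronologically related yields $q \ll \yu$.

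The second step is the delicate one: I must produce $z \in \mathcal{L}^+(q) \cap U$ together with a future timelike curve in $U$ from $z$ to $\yu$, contradicting $\yu \in \EUq$. The coincidence $\yu \in \mathcal{L}^+(q) \cap I^+(q)$ obtained above forces $\gamma^*$ to admit a first conjugate point $\gamma^*(\tau_c)$ with $\tau_c < 1$; otherwise $\yu$ would lie on $\partial J^+(q)$, contradicting $\yu \in I^+(q)$. Past $\tau_c$ the standard second-variation argument supplies, for $\tau \in (\tau_c,1)$ sufficiently close to $1$, a future timelike shortcut from $z := \gamma^*(\tau)$ to $\yu$; openness of $U$ about $\yu$ then lets me choose $\tau$ close enough to $1$ so that both $z$ and the shortcut lie in $U$, giving the desired earlier observation.

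The hard part is this last step: one must keep the conjugate-point shortcut inside the prescribed open set $U$ while connecting a point $z \in \mathcal{L}^+(q)$ to $\yu$ by a future timelike curve in $U$. This balances a global Lorentzian input (existence of a conjugate point on $\gamma^*$ and the attendant second-variation shortcut) against the local requirement of containment in $U$, and is the only nontrivial step in an otherwise structural proof.
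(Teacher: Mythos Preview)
Your proof of part~(2) and the first step of part~(1) are correct and match the paper's argument: you establish $q \ll \yu$ (equivalently $\tau(q,\yu)>0$) by the broken–null–geodesic shortcut argument, exactly as the paper does. Your unified formulation via concatenation is in fact slightly cleaner than the paper's case split on $\tau(p_b,\yu)$.

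The second step, however, is both unnecessary and flawed. It is unnecessary because the paper concludes immediately at this point: by the characterization $\EUq = \pi(\mathcal{C}_U(q))$ stated in Section~\ref{Sec_prelim} (from \cite{Kurylev2018}), any $y \in \EUq$ satisfies $\tau(q,y)=0$, so $\tau(q,\yu)>0$ already gives $\yu \notin \EUq$. No further geometric work is required.

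Your attempted direct verification of the definition contains two genuine errors. First, the implication ``$\yu \in \mathcal{L}^+(q)\cap I^+(q)$ forces a conjugate point on $\gamma^*$ before $\yu$'' is false: what it forces is a \emph{cut} point, which may arise from intersecting null geodesics rather than from conjugacy, so the second-variation argument is not available in general. Second, even granting a conjugate point, the claimed ``timelike shortcut from $z=\gamma^*(\tau)$ to $\yu$'' does not exist: $z$ and $\yu$ lie on the same null geodesic with $z$ shortly before $\yu$, so generically $\tau(z,\yu)=0$. The second-variation shortcut runs from $q$ to points past the conjugate point, not between two nearby points on $\gamma^*$. Drop the second step entirely and invoke the characterization $\EUq=\pi(\mathcal{C}_U(q))$ after your first step.
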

\begin{proof}   
    For (1), if $\yu \notin \ntxxi$, 
    then there exists $1 \leq j \leq 4$ such that $\yu \in J^+(\gamma_{x_j, \xi_j}(t_j^b))$. 
    For convenience, we denote $\gamma_{x_j, \xi_j}(t_j^b)$ by $p_b$ in the following.
    With $\yu > p_b$, there are two possibilities, either $\tau(p_b, \yu) > 0$ or $\tau(p_b, \yu)  = 0$. 
    In the first case, one has
    \[
    \tau(q, \yu) \geq \tau(q,p_b) + \tau(p_b, \yu) > 0.
    \]
    By \cite[Lemma 14.19]{Neill1983}, there is a time-like geodesic connecting $q$ and $\yu$. 
    The definition (\ref{def_eob}) of the earliest observation set implies that $\yu \notin \mathcal{E}_U(q)$. 
    In the second case, there is a future pointing light-like geodesic $\gamma_{p_b, \zeta}(t)$ from $p_b$ to $\yu$. 
    Notice $\gamma_{x_j, \xi_j}(t)$ is a future pointing light-like geodesic from $q$ to $p_b$. 
    We would like to use the short-cut argument in \cite[Section 2.1]{Kurylev2018} to show that $\tau(q, y)>0$. 
    It suffices to check that $\dot{\gamma}_{x_j, \xi_j}(t_j^b)$ is not proportional to $\dot{\gamma}_{p_b, \zeta}(0)$ at $p_b \in \partial M$, and therefore we have a union of two geodesics that cannot be one light-like geodesics by reparameterization. 
    This is true with the assumption $\yu \notin \uniongamma$. 
    It follows that
    $
    \tau(q, \yu)> 0
    $
    and therefore $\yu \notin \mathcal{E}_U(q)$. 
    \noindent
    For (2), it is a result of the causal relation $\gamma_j(t_j^b) > q$. 
\end{proof}


%
%
%

\subsection{Asymptotic expansion}\label{Sec_aexpan}
Recall the Taylor expansion of $w(x,u,\xi)$ in (\ref{def_taylorw}). 
For the moment, we do not assume $\mathcal{N}_0, \mathcal{N}_1$ are null forms and leave this to be assumed later. 

Let $f = \sum_{j = 1}^J \epsilon_j f_j$, where $J = 3$ or $4$. 
The small boundary data $f_j$ are properly chosen as before for $j = 1, \dots, J$. 
Let $v_j$ solve the boundary value problem (\ref{eq_v1})
and we write $w  = Q_g(h)$ if $w$ solves the boundary value problem
\begin{equation}\label{bvp_qg}
\begin{aligned}
\square_g w &= h, & \  & \mbox{on } M ,\\
\partial_\nu w &= 0 , & \  & \mbox{on } \partial M,\\
w &= 0, & \  & \mbox{for } t <0.
\end{aligned}
\end{equation}
Let $v = \sum_{j=1}^J \ep_j v_j$ and we have
\begin{align}\label{expand_u}
u &= v - Q_g{[w(x,u(x), \nabla_g u(x))]}\nonumber \\
& = v - 
\underbrace{ Q_g[\mathcal{N}_0(\nabla_g u, \nabla_g u) + u \mathcal{N}_1 (\nabla_g u, \nabla_g u) + u^2 \mathcal{M}(\nabla_g u, \nabla_g u) + o(|u|^2 \cdot |\xi|^2)]}_{\text{denote it by } A \coloneqq A_2 + A_3 + A_4 + \dots},
\end{align}
where we rewrite the term $A$ by the order of $\epsilon$-terms, such that $A_2$ denotes terms with $\epsilon_i \epsilon_j$, 
$A_3$ denotes terms with $\epsilon_i \epsilon_j \epsilon_k$, and $A_4$ denotes terms with $\epsilon_i \epsilon_j \epsilon_k \epsilon_l$, for $1 \leq i,j,k,l \leq J$. 
Notice the term $v$ appears $j$ times in each $A_j$, for $j = 2, 3, 4$. 
Therefore, we introduce the notation $A_2^{ij}$ to denote the result if we replace $v$ by $v_i, v_j$ in $A_2$ in order, and similarly the notations
$A_3^{ijk}$, $A_4^{ijkl}$
such that 
\[
A_2 = \sum_{i,j} \ep_i \ep_j A_2^{ij}, \quad 
A_3 = \sum_{i,j, k} \ep_i\ep_j\ep_k  A_3^{ijk},\quad  
A_4 =\sum_{i,j, k,l} \ep_i\ep_j\ep_k \ep_l A_4^{ijkl}.
\] 
By (\ref{expand_u}), we have
\begin{align}\label{eq_A}
\begin{split}
	&A_2^{ij} = Q_g[\mathcal{N}_0(\nabla_g v_i, \nabla_g v_j)],\\
	&A_3^{ijk} = (Q_g[\mathcal{N}_0(\nabla_g v_i, \nabla_g A_2^{jk})] + Q_g[v_i \mathcal{N}_1(\nabla_g v_j, \nabla_g v_k)])\\
	&A_4^{ijkl} = (Q_g[\mathcal{N}_0(\nabla_g v_i, \nabla_g A_3^{jkl})] + Q_g[\mathcal{N}_0(\nabla_g A_2^{ij}, \nabla_g A_2^{kl})]\\
	&+ 2 Q_g[v_i \mathcal{N}_1(\nabla_g v_j, \nabla_g A_2^{kl})] 
	-  Q_g[A_2^{ij} \mathcal{N}_1(\nabla_g v_k, \nabla_g v_l)]
	+  Q_g[v_i v_j \mathcal{M}(\nabla_g v_k, \nabla_g v_l)]).
    \end{split}
\end{align}
In Section \ref{Sec_threewaves} and \ref{Sec_fourwaves},
we analyze the interactions of nonlinear waves microlocally by analyzing each terms above.

\subsection{The solution operator $Q_g$}\label{subsec_Qg}
In this subsection, we study how the singularities propagate after applying $Q_g$, i.e. the solution operator to the boundary value problem (\ref{bvp_qg}).
We denote the outward (+) and inward (-) pointing tangent bundles by 
\begin{equation}\label{def_tbundle}
\TMpm = \{(x, v) \in \partial TM: \ \pm g(v, n)>0 \},
\end{equation}
where $n$ is the outward pointing unit normal of $\partial M$.
For convenience, we also introduce the notation 
\begin{equation}\label{def_Lbundle}
\LcMpm =\{(z, \zeta)\in L^* M \text{ such that } (z, \zeta^\#)\in \TMpm \}
\end{equation}
to denote the lightlike covectors that are outward or inward pointing on the boundary.

First we recall some definitions and notations in 
\cite{Melrose1978,Melrose1982,Vasy2008,MR2304165,melrose1992atiyah} (see also \cite{LEBEAU1997,Taylor1975}). For a smooth manifold $M$ with boundary $\partial M$, 
let $\bTM$ be the compressed cotangent bundle, see \cite[Lemma 2.3]{melrose1992atiyah}. 
There is a natural map \[
\pi_b: T^*M \rightarrow \bTM
\] 
satisfying that 
it is the identity map in $T^*\intM$ and for $p \in \partial M$ it has the kernel $N^*_p(\partial M)$ and the range identified with $T^*_p(\partial M)$. 
Suppose locally $M$ is given by $(\bx, x^n)$ with $x^n \geq 0$ and the one forms are given by $ \sum \bxi \diff \bx + \xi_n \diff x^n$. 
Then in local coordinates, $\pi_b$ takes the from 
\[
\pi_b(\bx,x^n,\bxi,\xi_n)  = (\bx,x^n,  \bxi, x^n\xi_n).
\]
The compressed cotangent bundle $\bTM$ can be decomposed into the elliptic, glancing, and hyperbolic sets by 
\begin{align*}
&\mathcal{E} = \{\mu \in \bTM \setminus 0,\  \pi_b^{-1}(\mu) \cap \Char(\square_g) = \emptyset \},\\
&\mathcal{G} = \{\mu \in \bTM \setminus 0,\  \text{Card}(\pi_b^{-1}(\mu) \cap \Char(\square_g)) = 1 \},\\
&\mathcal{H} = \{\mu \in \bTM \setminus 0,\  \text{Card}(\pi_b^{-1}(\mu) \cap \Char(\square_g)) = 2 \}.
\end{align*}
Recall $\Char(\square_g)$ is the null bicharacteristic set of $\square_g$ and 
we define its image 
\[
\compchar = \pi_b{(\Char(\square_g))}
\] as the compressed bicharacteristic set. 
Obviously the set $\compchar$ is a union of the glancing set $\mathcal{G}$ and the hyperbolic set $\mathcal{H}$.
If $\mu \in T^* \intM$ and $\mu \in \Char{(\square_g)}$, then $\mu$ is in $\mathcal{G}$.
Let $\gsetint$ be the subset of $\mathcal{G}$ containing such $\mu$.    
Note  $\gsetint$ can be identified with $ T^* \intM$. 
If $\mu \in T^* \intM$ and $\mu \notin \Char({\square_g)}$, then $\mu$ is in the elliptic set $\mathcal{E}$. 

The glancing set on the boundary $\mathcal{G} \setminus \gsetint$ is the set of all points $\mu \in \compchar$ such that $\pi_b^{-1}(\mu) \in T^*(\partial M)$ and the Hamilton vector field $H_p$ of $\square_g$ is tangent to $\partial M$ at $\pi_b^{-1}(\mu) $. 
We define $\mathcal{G}^k$ as the  subset where $H_p$ is tangent to $\partial M$ with the order less than $k$, for $k \geq 2$, see \cite[(3.2)]{Melrose1978} and \cite[Definiton 24.3.2]{MR2304165}. 
Note that $\mathcal{G} = \gsetint \cup \mathcal{G}^2$. 
In particular, we denote by $\mathcal{G}^\infty$ the subset with infinite order of bicharacteristic tangency. 
The subset $\mathcal{G}^2 \setminus \mathcal{G}^3$ is the union of the diffractive part $\mathcal{G}_d$ and the gliding part $\mathcal{G}_g$ depending on whether $H_p^2 x^n > 0 $ or $H_p^2 x^n < 0 $. 

Next we are ready to define the generalized broken bicharacteristic of $\square_g$, 
see \cite{Melrose1978, Melrose1982,MR2304165} and \cite[Definition 1.1]{Vasy2008}.
\begin{df}[{\cite[Definition 24.3.7]{MR2304165}}]\label{def_gb}
    Let $I \subset \mathbb{R}$ be an open interval and $B \subset I$ is a discrete subset. 
    A generalized broken bicharacteristic arc of $\square_g$ is a map $\nu: I \rightarrow \pi_b^{-1}(\mathcal{G} \cup \mathcal{H})$ satisfying the following properties:
    \begin{enumerate}[(1)]
        \item $\nu(t)$ is differentiable and $\nu'(t) = H_p(\nu(t))$, if $\nu(t) \in T^*\intM$ 
        \item $\nu(t)$ is differentiable and $\nu'(t) = H_p^G(\nu(t))$ , if $\nu(t) \in \pi_b^{-1}( \mathcal{G}^2 \setminus \mathcal{G}_d)$, see \cite[Definition 24.3.6]{MR2304165} for the vector field $H_p^G(\nu(t))$,
        \item every $t \in B$ is isolated, and $\nu(s) \in  T^*\intM$ if $|s-t|$ is small enough and $s \neq t$.
        The limits $\nu(t\pm 0)$ exist and are different points in the same fiber of $\partial T^*M$. 
    \end{enumerate}
The continuous curve $\dot{\nu}$ obtained by mapping $\nu$ into $\compchar$ by $\pi_b$  is called a generalized broken bicharacteristic. 
    \end{df}  

If $\nu$ is a generalized bicharacteristic arc contained in $\gsetint \cup \mathcal{H}$, then we call it a broken bicharacteristic arc, see \cite[Definiton 24.2.2]{MR2304165}, which is roughly speaking the union of bicharacteristics of $\square_g$ over $\intM$ with reflection points on the boundary. 
By the definition, a broken bicharacteristic arc arrives transversally to $\partial M$ at $\nu(t-0)$ and then leaves the boundary transversally from the reflected point $\nu(t+0)$, with the same projection in $T^*\partial M \setminus 0$. 
The image of such $\nu$ under $\pi_b$ is called a broken bicharacteristic.   

In this paper, with the assumption that $\partial M$ is null-convex, 
the generalized broken bicharacteristics we consider in Section \ref{Sec_threewaves} and \ref{Sec_fourwaves} are contained in $\gsetint \cup \mathcal{H}$, and therefore are all broken bicharacteristics. 
We denote
the broken bicharacteristic arc of $\square_g$ that contains the covector $(y, \eta) \in L^*M$ by $\Theta^b_{y, \eta}$. 
According to \cite[Corollary 24.3.10]{MR2304165}, the arc $\Theta^b_{y, \eta}$ is unique for each 
$\pi_b(y, \eta) \notin \mathcal{G}^\infty$. 
The following lemma shows that with proper assumptions on the boundary, a generalized bicharacteristics passing a point in $\gsetint \cup \mathcal{H}$ is always contained in $\gsetint \cup \mathcal{H}$, i.e., is always a broken bicharacteristic. 

\begin{lm}\label{lm_gh}
    Let $(M,g)$ be a Lorentzian manifold with timelike 
    boundary $\partial M$. 
    Suppose $\partial M$ is null-convex, see (\ref{def_nconvex}). 
    If $\dot{\nu}$ is a future pointing generalized bicharacteristic with $\dot{\nu}(0) \in \gsetint \cup \mathcal{H}$, then $\dot{\nu} \subset \gsetint \cup \mathcal{H}$ and therefore it is a future pointing broken characteristic. 
\end{lm}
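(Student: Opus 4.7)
My plan is to argue by contradiction, pinpoint the first possible exit time from $\gsetint \cup \mathcal{H}$, and then use null-convexity to rule out the only way such an exit could occur, namely entry into the boundary glancing set $\mathcal{G}^2 = \mathcal{G}\setminus \gsetint$. Concretely, suppose $\dot{\nu} \not\subset \gsetint \cup \mathcal{H}$, and set
\[
t_0 = \inf\{\, t > 0 : \dot{\nu}(t) \notin \gsetint \cup \mathcal{H}\,\}.
\]
Continuity of $\dot{\nu}$ into $\compchar$ (Definition \ref{def_gb}) together with the fact that $\compchar = \gsetint \cup \mathcal{G}^2 \cup \mathcal{H}$ forces $\dot{\nu}(t_0) \in \mathcal{G}^2$, so the projection $\pi_b^{-1}(\dot{\nu}(t_0))$ is a null covector $\zeta$ at a boundary point $p \in \partial M$ with $\zeta^\#$ tangent to $\partial M$.

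The key step is to compute $H_p^2\rho$ at $(p,\zeta)$ for a boundary defining function $\rho$ with $d\rho|_{\partial M} = \nu^b$ (so $\rho \ge 0$ in $M$). Since $p(x,\zeta)=g^{ij}\zeta_i\zeta_j$ and $H_p\rho = 2g(\zeta^\#,\nabla\rho)$, a direct computation at the tangent point gives $H_p\rho=0$ and, using the geodesic equation together with $\nabla\rho = \nu$ on $\partial M$,
\[
H_p^2\rho\bigm|_{(p,\zeta)} = -2\,g\bigl(\nabla_{\zeta^\#}\zeta^\#,\nu\bigr) = 2\,g\bigl(\nabla_{\zeta^\#}\nu,\zeta^\#\bigr) = 2\,\kappa(\zeta^\#,\zeta^\#),
\]
where the middle equality uses that $g(\zeta^\#,\nu)=0$ along $\zeta^\#$ at this point and $\kappa$ is the second fundamental form appearing in (\ref{def_nconvex}). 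The null-convexity assumption then yields $H_p^2\rho \geq 0$, which by definition rules out the gliding part $\mathcal{G}_g$ (where $H_p^2\rho < 0$); hence $\dot{\nu}(t_0) \in \mathcal{G}_d \cup \mathcal{G}^3$.

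If $H_p^2\rho > 0$ (the strictly diffractive case $\mathcal{G}_d$), then $\rho$ has a strict local minimum at $t_0$ along the projected null geodesic, so $\pi(\nu(t)) \in \intM$ for $0 < |t-t_0| \ll 1$. Consequently $\dot{\nu}(t) \in \gsetint$ for such $t$; but then $\dot{\nu}(t_0)$ is an isolated contact with $\mathcal{G}_d$, and Definition \ref{def_gb}(2) (via the vector field $H_p^G$) forces $\dot{\nu}$ to coincide at $t_0$ with the interior Hamilton flow, so in fact $\dot{\nu}(t_0)$ should be regarded as a limit of $\gsetint$ points, contradicting $\dot\nu(t_0)\notin \gsetint \cup \mathcal H$. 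In the degenerate case $H_p^2\rho = 0$ (so $\dot{\nu}(t_0) \in \mathcal{G}^3$), run the same argument after noting that null-convexity still gives $\rho(\pi(\nu(t)))\geq 0$ near $t_0$, so the projected trajectory remains in $M$; the tangential generalized Hamilton field $H_p^G$ of \cite[Definition 24.3.6]{MR2304165} at a non-gliding higher-order tangent point coincides with $H_p$, so again $\dot{\nu}$ near $t_0$ is an interior bicharacteristic segment with an isolated tangent touching, yielding the same contradiction.

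The main obstacle is handling the higher-order tangency set $\mathcal{G}^3$ (and a priori $\mathcal{G}^\infty$) rigorously: the tangential field $H_p^G$ is constructed using the extension of $H_p$ to the boundary and may a priori produce a trajectory sliding within $\mathcal{G}^2$. The argument there relies on the fact that the non-negativity of $H_p^{2k}\rho$ for all $k$ inherited from null-convexity prevents sustained tangential motion, so $\dot{\nu}$ must leave $\mathcal{G}^2$ immediately on either side of $t_0$ and stays in $\gsetint$, completing the contradiction. Once the absence of $\mathcal{G}^2$ is established, $\dot{\nu}$ consists of interior bicharacteristic segments joined at hyperbolic reflection points, which is exactly the definition of a broken bicharacteristic arc (\cite[Definition 24.2.2]{MR2304165}).
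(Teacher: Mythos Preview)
Your argument has a sign error that inverts the crucial conclusion. With $\nu$ the \emph{outward} unit normal, the condition $d\rho|_{\partial M}=\nu^b$ forces $\rho\le 0$ in $M$, not $\rho\ge 0$; equivalently, your $\rho$ is the negative of the paper's boundary defining function $x^n$. Your computation $H_p^2\rho = 2\kappa(\zeta^\#,\zeta^\#)\ge 0$ is correct, but translated to the paper's convention this says $H_p^2 x^n\le 0$, which places $\dot\nu(t_0)$ in $\mathcal{G}_g\cup\mathcal{G}^3$, not $\mathcal{G}_d\cup\mathcal{G}^3$. (This is consistent with Claim~\ref{cl_tancase}: a null geodesic tangent to a null-convex boundary leaves $\intM$ on both sides.) So the case you actually need to rule out is the gliding one, for which your local-minimum argument does not apply. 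Separately, even in the diffractive scenario your ``contradiction'' is not one: having $\dot\nu(t)\in\gsetint$ for $0<|t-t_0|\ll 1$ while $\dot\nu(t_0)\in\mathcal{G}_d$ is perfectly compatible with Definition~\ref{def_gb}; a diffractive tangency is just an interior bicharacteristic grazing $\partial M$, and $\dot\nu(t_0)$ being a limit of $\gsetint$ points does not force it into $\gsetint$. Finally, the appeal to ``non-negativity of $H_p^{2k}\rho$ for all $k$'' is not justified by null-convexity, which controls only the second-order term.

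The paper's argument avoids all of this by never analysing the glancing point itself. Instead it observes that on $[0,t_0)$ the curve $\pi(\nu)$ is an interior null geodesic (using that hyperbolic times are isolated, so there is a left neighbourhood of $t_0$ entirely in $\gsetint$), and then invokes \cite[Proposition~2.4]{Hintz2017}: null-convexity forces any null geodesic from $\intM$ to meet $\partial M$ transversally, hence at a point of $\mathcal{H}$, never of $\mathcal{G}^2$. This is the missing geometric input; once you use it, the contradiction at $t_0$ is immediate and no case analysis on $\mathcal{G}_d$, $\mathcal{G}_g$, $\mathcal{G}^3$ is needed.
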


\begin{proof}
    If $\dot{\nu}(0) \in \mathcal{H}$, then there are two different points in $\pi_b^{-1}(\dot{\nu})(0) \cap \Char({\square_g})$. 
    In this case suppose $\nu^\pm \in T^\pm_{\partial M}  M$ such that $\pi_b(\nu^\pm) = \dot{\nu}(0)$.
    By Definition \ref{def_gb}, the generalized bicharacteristic arc  $\nu $ of $\dot{\nu}$ has $\nu(0-) = \nu^+$ and $\nu(0+) = \nu^-$. 
    It leaves $\partial M$ transversally after $t >0$ and one has $\dot{\nu}(t) \in \gsetint$ for small enough positive $t$. 
    
    Now we consider the second case where $\dot{\nu}(0) \in \gsetint$. 
    The generalized bicharacteristic arc  $\nu $ of $\dot{\nu}$ has $\nu(0) \in T^* \intM$ and therefore near $\nu(0)$ it is the null bicharacteristic of $\square_g$. 
    Let $\pi: T^*M \rightarrow M$ be the natural projection. 
    Then $\pi(\nu)(t)$ near $t = 0$ is a light-like geodesic in $\intM$. 
    If $\pi(\nu)(t)$ hits the boundary at $\pi(\nu)(t_0) = p_0$, 
    then by \cite[Proposition 2.4]{Hintz2017} the null-convex boundary implies that the intersection is transversal with the tangent vector $v^+$ such that $(p_0, v^+) \in T^+_{\partial M} M$. 
    Let $v^- = v^+  - 2 g(v^+, n) n$ and it follows that
    $
    g(v^-, n) < 0, \  g(v^-, v^-) = 0
    $
    and $\pi_b(v^+) = \pi_b(v^-)$.
    Thus, $\dot{\nu}(t_0) \in \mathcal{H}$ and we arrive at the first case. 
    
    Combining the two cases, we have $\dot{\nu}$ is always contained in $\gsetint \cup \mathcal{H}$. 
    \end{proof}
Let $\dot{\mathcal{D}}'(M)$ be the set of distributions supported in $M$. 
The boundary wave front set $\wfset_b(u)$ of a distribution $u \in \dot{\mathcal{D}'}(M)$ is defined by 
\[
\wfset_b(u)  = \bigcap \pi_b(\Char (B)),
\]
where the intersection takes for any properly supported $B \in \Psi^0_b(M)$ and $Bu \in \dot{\mathcal{A}}(M)$.  
Here $\Psi^0_b(M)$ is the set of b-pseudodifferential operators on $M$ of order zero, for more details see \cite[Definition 18.3.18]{MR2304165}. 
The set $\dot{\mathcal{A}}(M) \equiv \bigcup_m I^m(M, \partial M)$ contains distributions that are smooth in $\intM$ and have tangential smoothness at $\partial M$, where $I^m(M, \partial M)$ denotes the class of conormal distributions on $\partial M$ of order $m$. 
Note that away from the boundary it coincides with the usual definition of wave from set, i.e. $\wfset_b(u)|_{\intM} = \wfset(u|_{\intM})$.  
We present the result about the singularities of solutions to the boundary value problem (\ref{bvp_qg}) in the following proposition according to \cite{Melrose1978, Melrose1982}, \cite[Theorem 8.7]{Vasy2008}, and  \cite{MR2304165}. 

\begin{pp} \label{pp_wfb}
    Let $h \in \dot{\mathcal{D}'}(M)$ and $w = Q_g(h)$ be the solution to the boundary value problem (\ref{bvp_qg}) in $M$. 
    Then \[
    \wfset_b(w) \setminus \wfset_b(h) \subset \compchar
    \] is a union of maximally extended generalized characteristics of $\square_g$. 
    
 \end{pp}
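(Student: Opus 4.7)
The plan is to reduce the statement to the Melrose--Sjöstrand propagation of singularities theorem for boundary value problems, adapted to the Neumann condition. The first step is to observe that, by the definition of $Q_g$, $w$ is the forward solution of $\square_g w = h$ with $\partial_\nu w = 0$, so $w$ lies in $\dot{\mathcal{D}}'(M)$. Away from $\partial M$, the statement reduces to Hörmander's classical propagation theorem for operators of real principal type: on $\gsetint = T^*\intM \cap \Char(\square_g)$, $\wfset(w)\setminus \wfset(h)$ is invariant under the Hamilton flow of the principal symbol $p(x,\zeta) = g^{ij}\zeta_i\zeta_j$.

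At the boundary, decompose $\bTM$ into the elliptic, hyperbolic, and glancing sets $\mathcal{E}$, $\mathcal{H}$, $\mathcal{G}$. On $\mathcal{E}$ the problem is elliptic in the tangential sense, and elliptic regularity for the Neumann problem yields $\wfset_b(w)\cap \mathcal{E} \subset \wfset_b(h)$; this is why the singularity set beyond that of $h$ must lie in $\compchar = \mathcal{G}\cup\mathcal{H}$. On $\mathcal{H}$, two distinct points of $\Char(\square_g)$ lie over a single compressed covector (an incoming and a reflected one); since $\partial M$ is timelike, each carries a transverse piece of bicharacteristic, and the Neumann condition (being a self-adjoint coercive condition for $\square_g$) gives the standard law of reflection: if the incoming covector is absent from $\wfset_b(w)$ (modulo $\wfset_b(h)$) so is the reflected one, and conversely. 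This is the content of \cite[Theorem 24.2.1]{MR2304165} and the main theorem of \cite{Melrose1978}.

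The delicate case is the glancing set $\mathcal{G}^2$, where the Hamilton field is tangent to $\partial M$. Here one uses the generalized Hamiltonian $H_p^G$ from \cite[Definition 24.3.6]{MR2304165} and runs the Melrose--Sjöstrand positive commutator argument along $H_p^G$; diffractive and gliding points are handled uniformly through the notion of generalized broken bicharacteristic (Definition \ref{def_gb}). This is precisely \cite[Theorem 24.5.3]{MR2304165}, whose conclusion is that $\wfset_b(w)\setminus \wfset_b(h)$, viewed as a subset of $\compchar$, is a union of maximally extended generalized bicharacteristics. The Neumann condition is admissible for this theorem since it is a symmetric boundary condition for the wave operator (equivalently, it defines a self-adjoint realization of $\square_g$), and the needed a priori estimates are standard for the forward Neumann problem; see also \cite[Theorem 8.7]{Vasy2008}.

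The main obstacle in a self-contained proof would be the commutator construction at $\mathcal{G}^2\setminus \mathcal{G}_d$, where one must carefully choose a symbol positive along $H_p^G$ and supported in a small conic neighborhood to propagate regularity through the glancing direction, together with showing that the reflection law for Neumann data gives no loss at hyperbolic points. Since the arguments of \cite{Melrose1978, Melrose1982, MR2304165, Vasy2008} handle exactly this setting (any symmetric elliptic boundary condition, in particular Neumann, on a smooth timelike boundary), the proposition follows by invoking those results.
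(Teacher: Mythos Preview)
Your proposal is correct and matches the paper's own treatment: the paper does not give an independent proof of this proposition but simply records it as a consequence of \cite{Melrose1978, Melrose1982}, \cite[Theorem 8.7]{Vasy2008}, and \cite[Chapter 24]{MR2304165}. Your sketch unpacks exactly these references (interior propagation, elliptic regularity at $\mathcal{E}$, reflection at $\mathcal{H}$, and the Melrose--Sj\"ostrand argument at $\mathcal{G}^2$ for the Neumann condition), so there is nothing to compare.
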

Combining this proposition and Lemma \ref{lm_gh}, we have the following corollary. 
\begin{corollary}
In particular, if $\partial M$ is timelike and null-convex 
with 
$\wfset_b(h)$ contained in $\gsetint \cup \mathcal{H}$, then 
\[
\wfset_b(w) \subset \gsetint \cup \mathcal{H}
\]
and is a union of broken bicharacteristics. 
\end{corollary}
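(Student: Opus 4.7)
The proof combines Proposition \ref{pp_wfb} and Lemma \ref{lm_gh}. Pick any $\mu \in \wfset_b(w)$; I claim $\mu \in \gsetint \cup \mathcal{H}$. If $\mu \in \wfset_b(h)$ this is immediate from the hypothesis, so I may assume $\mu \in \wfset_b(w) \setminus \wfset_b(h)$. Proposition \ref{pp_wfb} then places $\mu$ on a maximally extended generalized characteristic $\dot\nu$ of $\square_g$ contained in $\wfset_b(w) \setminus \wfset_b(h) \subset \compchar$.

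The key step is to trace $\dot\nu$ backward until it meets $\wfset_b(h)$. Since $Q_g$ is the causal forward solution operator to (\ref{bvp_qg}) and $w$ vanishes for $t<0$, both $\wfset_b(w)$ and $\wfset_b(h)$ are bounded below in time. The backward trace of $\dot\nu$ therefore cannot continue indefinitely within $\wfset_b(w) \setminus \wfset_b(h)$; by maximality combined with the fact that $\wfset_b(w)$ is closed, the backward endpoint of $\dot\nu$ must accumulate at some $\mu_0 \in \wfset_b(h)$, for otherwise we would have a singularity of $w$ with no source, contradicting the causal dependence $\supp w \subset J^+(\supp h)$.

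Now reparameterize so that $\dot\nu(0) = \mu_0$. By hypothesis $\mu_0 \in \wfset_b(h) \subset \gsetint \cup \mathcal{H}$, so Lemma \ref{lm_gh} applies and yields $\dot\nu|_{[0,\infty)} \subset \gsetint \cup \mathcal{H}$; moreover the lemma identifies the forward continuation as a broken bicharacteristic. The original point $\mu$ lies on this forward continuation, since singularities of the causal solution propagate into the future from their sources, and hence $\mu \in \gsetint \cup \mathcal{H}$. Taking the union over $\mu$ gives $\wfset_b(w) \subset \gsetint \cup \mathcal{H}$ and realizes $\wfset_b(w) \setminus \wfset_b(h)$ as a union of broken bicharacteristics, which is the claim.

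The only delicate point is the backward-tracing argument: making precise that the maximally extended characteristic $\dot\nu$ necessarily meets $\wfset_b(h)$ rather than escaping to infinity in the past. This is a standard causality statement for the forward solution operator and follows from the bounded-below supports of $h$ and $w$ together with closedness of $\wfset_b(h)$; it requires no additional microlocal input beyond the two quoted results, which is why the corollary is essentially a direct combination of the proposition with the geometric lemma.
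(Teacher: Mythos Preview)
Your proof is correct and follows the same approach as the paper, which simply states that the corollary follows by combining Proposition~\ref{pp_wfb} and Lemma~\ref{lm_gh} without spelling out the details. Your backward-tracing argument is the natural way to make this combination precise, and your identification of the one delicate point (that the maximally extended generalized bicharacteristic must eventually meet $\wfset_b(h)$ in the past, using that $t$ is bounded below on $\supp w$) is exactly right.
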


\section{The nonlinear interaction of three waves}\label{Sec_threewaves}
In this section, we consider the interaction of three distorted plane waves. 
Consider three vectors $(x_j, \xi_j)$ satisfying the assumption (\ref{assp_causalindep}) for $j=1,2,3$ (see also the construction in \cite[Section  3.2]{Hintz2020}). 
Let $K(x_j, \xi_j, s_0)$ and $\Lambda(x_j, \xi_j, s_0)$ be defined as in Section \ref{subsec_plane}. 
We assume the submanifolds $K_j \equiv K_j(x_j, \xi_j, s_0)$ intersect 3-transversally 
as in \cite{Kurylev2018,Lassas2018}, i.e.
\begin{itemize}
    \item[(1)] $K_i$ and $K_j$ intersect transversally at a codimension $2$ submanifold $K_{ij}$, for $1 \leq i < j \leq 3$;
    \item[(2)] $K_1, K_2, K_3$ intersect transversally at a codimension $3$ submanifold $K_{123}$. 
\end{itemize}
By Lemma \ref{lm_intM}, the causal independence assumption in (\ref{assp_causalindep}) implies that in $\nxxi \cap \ntxxi$,
with $s_0$ small enough, one has $K_{ij}, K_{ijk} \subset \intM$ and therefore
$
\pi_b(\Lambda_{ij}), \pi_b(\Lambda_{ijk}) \subset \gsetint
$
for $ 1 \leq i < j \leq 3$. 

We consider three distorted waves $u_j$ associated with $(\vec{x}, \vec{\xi})$ such that
\[
u_j \in I^{\mu}(\Lambda(x_j, \xi_j, s_0)), \quad j = 1,2,3,
\] 
solves the linearized wave problem in $\tM$, i.e.  $\square_g u_j \in C^\infty(M)$, with the principal symbol nonvanishing along $\gamma_{x_j, \xi_j}(\mathbb{R}_+)$. 
Let $f_j = \partial_\nu u_j|_{\partial M}$ and we use the Neumann data $f = \sum_{j = 1}^4 \ep_j f_j$ for the semilinear boundary value problem (\ref{ihmNBC}).
Consider $v_j$ solving (\ref{eq_v1}) and it follows that $v_j = u_j\mod C^\infty(M)$. 
We define
\[
\mathcal{U}^{(3)} = \partial_{\epsilon_1}\partial_{\epsilon_2}\partial_{\epsilon_3} u |_{\epsilon_1 = \epsilon_2 = \epsilon_3=0},
\]
and combine (\ref{expand_u}), (\ref{eq_A}) to have
\begin{align*}
\mathcal{U}^{(3)} 
&= - \sum_{(i,j,k) \in \Sigma(3)} A_3^{ijk} \\
&= - \sum_{(i,j,k) \in \Sigma(3)} Q_g[\mathcal{N}_0(\nabla_g v_i, \nabla_g Q_g[\mathcal{N}_0(\nabla_g v_i, \nabla_g v_j)])] + Q_g[v_i \mathcal{N}_1(\nabla_g v_j, \nabla_g v_k)].
\end{align*}
We emphasize that $v_j \in \dot{\mathcal{D}}'(M)$ and  
we can always identify it as en element of $\mathcal{D}'(\tM)$.
Particularly in $\nxxi \cap \ntxxi$, it can be identified as a conormal distribution in $\tM$. 
Thus, we can use the calculus of conormal distributions to analyze their multiplication, for example, $v_iv_j$ with $i \neq j$.
Note $v_iv_j$ has the wave front set away from $N^*\partial M$, since we have $K_{ij} \subset \intM$.
Then we treat $v_iv_j$ as a distribution supported in $M$ and we use Proposition \ref{pp_wfb} and its corollary to analyze the singularities of $Q_g(v_iv_j)$. 
We repeat this type of arguments to analyze the singularities 
and compute the principal symbol of $\mathcal{U}^{(3)}$ in the following.


In addition, we would like to relate the singularities of the ND map with those of $\mathcal{U}^{(3)}$ restricted on the boundary. 
Indeed, with $\mathcal{U}^{(3)}$  defined above we have  
\[
\epslamt  = \mathcal{U}^{(3)}|_{\partial M}. 
\]
Let $\mathcal{R}(\mathcal{U}^{(3)})$ be the trace of $\mathcal{U}^{(3)}$ on $\partial M$. 
Notice for any timelike covector $(y_|, \eta_|) \in T^* \partial M \setminus 0$, there is exactly one outward pointing lightlike covector $(y, \eta^+)$ and one inward pointing lightlike covector$(y, \eta^-)$ satisfying $y_| = y,\  \eta_| = \eta^\pm|_{T^*_{y} \partial M}$. 
It is shown in \cite{Hintz2020} that the trace operator $\mathcal{R}$ is an FIO of order ${1}/{4}$ with 
a nonzero principal symbol at such $(y_|, \eta_|, y, \eta^+)$ or $(y_|, \eta_|, y, \eta^-)$ .  

\subsection{The analysis of $A_2^{ij}$} 
First we analyze the singularities of the multiplication of two conormal distributions under the solution operator $Q_g$, i.e. 
 \[
A_2^{ij} = Q_g[\mathcal{N}_0(\nabla_g v_i, \nabla_g v_j)],
\] 
for $1 \leq i < j \leq 3$. 
By \cite[Lemma 3.3]{Lassas2018} and \cite{Wang2019}, one has 
\[
\mathcal{N}_0(\nabla_g v_i, \nabla_g v_j) \in I^{\mu+1, \mu+2}(\Lambda_{ij}, \Lambda_i) + I^{\mu+1, \mu+2}(\Lambda_{ij}, \Lambda_j),
\]
and away from 
$\Lambda_i$ and $\Lambda_j$, it has the principal symbol \[
(2\pi)^{-1}\mathcal{N}_0(\zeta^{(i)}, \zeta^{(j)})\sigma^{(p)}(v_i) (q, \zeta^{(i)}) \sigma^{(p)}(v_j) (q, \zeta^{(j)})
\] at the covector $(q, \zeta^{(i)}+ \zeta^{(j)}) \in \Lambda_{ij}$.
Note that $\mathcal{N}_0(\nabla_g v_i, \nabla_g v_j)$ is also a distribution supported in $M$.
Its boundary wave front set is contained in $\pi_b(\Lambda_i \cup \Lambda_j\cup {\Lambda_{ij}})$ and thus 
as a subset of $\mathcal{G}^{\text{int}} \cup \mathcal{H}$.
Then by Proposition \ref{pp_wfb} and its corollary, the set $\wfset_b(A_2^{ij})$ is contained in the union of $\pi_b(\Lambda_i \cup \Lambda_j\cup {\Lambda_{ij}})$ and their flow out under the broken bicharacteristics.
We notice that away from $J^+(\gamma_{x_i, \xi_i}(t_i^b)) $ and $ J^+(\gamma_{x_i, \xi_i}(t_j^b)) $, there are no new singularities produced by the flow out. 

More explicitly, in $\nxxi \cap \ntxxi$ we can identify $Q_g$ by $\tQ_g$, the causal inverse of $\square_g$ on $\tM$, to have
\[
A_2^{ij} \in I^{\mu, \mu+1}(\Lambda_{ij}, \Lambda_i) + I^{\mu, \mu+1}(\Lambda_{ij}, \Lambda_j),
\]
by \cite[Lemmma 3.4]{Lassas2018}. 
Here we regard the restriction of $A_2^{ij}$ to $\nxxi \cap \ntxxi$ as a distribution in  $ \tM$.
Additionally, 
if $(q, \zeta) \in \Lambda_{ij}$ is away from $\Lambda_i$ and $\Lambda_j$, then we have the principal symbol 
\begin{align*}
\sigma^{(p)}(A_2^{ij}) (q, \zeta) &= (2\pi)^{-1} |\zeta^{(i)} + \zeta^{(j)}|_{g^*}^{-2} \mathcal{N}_0(\zeta^{(i)}, \zeta^{(j)})\sigma^{(p)}(v_i) (q, \zeta^{(i)}) \sigma^{(p)}(v_j) (q, \zeta^{(j)}),
\end{align*}
with $\zeta = \zeta^{(i)} + \zeta^{(j)}$.

\subsection{The analysis of $A_3^{ijk}$}
The following proposition describes the singularities of $A_3^{ijk}$ produced by three waves interaction, see (\ref{eq_A}) for the explicit form of $A_3^{ijk}$, for $1 \leq i,j,k \leq 3$. 

\begin{pp}\label{pp_Aijk}
    Suppose the three submanifolds $K_i, K_j, K_k$ intersect 3-transversally at $K_{ijk}$. 
    Then in $\nxxi$ and $\ntxxi$ (for the definition see (\ref{def_nxxi},\ref{def_ntxxi})), we have the following statements. 
    \begin{enumerate}[(a)]
        \item  There is a decomposition $A_3^{ijk} = \tw_0 + \tw_1 + \tw_2 + \tw_3$ with 
        \begin{align}\label{eq_Aijk}
        \begin{split}
        &\tw_0 \in I^{3\mu + 3}(\Lambda_{ijk}), 
        \quad \wfset_b(\tw_1) \subset \pi_b((\Lambda_{ijk}^{g}(\epsilon) \cap \Lambda_{ijk}) \cup \Lambda_{ijk}^{b}),\\
        &\wfset_b(\tw_2) \subset \pi_b(\Lambda^{(1)} \cup (\Lambda^{(1)}(\epsilon) \cap \Lambda_{ijk}) \cup \Lambda_{ijk}^{b}),
        \quad  \wfset(\tw_3)\subset \Lambda^{(1)} \cup \Lambda^{(2)}.
        \end{split}
        \end{align}
        In particular, for $(q, \zeta) \in \Lambda_{ijk}$ the leading term $\tw_0$ has the principal symbol (\ref{eq_tw0ps}).
        \item Let $(y, \eta) \in \partial T^*M$ be a covector
        lying along the forward null-bicharacteristic starting at $(q, \zeta) \in \Lambda_{ijk}$. 
        Suppose $(y, \eta)$ is away from $\Lambda^{(1)}$. 
        Then the principal symbol of $\epslamt$ at $(y_|, \eta_|)$ is given in (\ref{eq_lambdaps}),
        where $(y_|, \eta_|)$ is the projection of $(y, \eta)$ on the boundary. 
    \end{enumerate}

\end{pp}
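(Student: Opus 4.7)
The plan is to decompose $A_3^{ijk}$ according to the explicit formula in (\ref{eq_A}), namely
\[
A_3^{ijk} = Q_g[\mathcal{N}_0(\nabla_g v_i, \nabla_g A_2^{jk})] + Q_g[v_i \mathcal{N}_1(\nabla_g v_j, \nabla_g v_k)],
\]
and analyze the two summands in $\nxxi \cap \ntxxi$ using the calculus of paired Lagrangian distributions, and then the propagation of singularities for $Q_g$ given by Proposition \ref{pp_wfb} and its corollary. Because we are in $\nxxi \cap \ntxxi$, I can identify $Q_g$ microlocally with the causal inverse $\tQ_g$ on $\tM$ away from the sets $J^+(\gamma_{x_j, \xi_j}(t_j^b))$, and then track the boundary reflections separately through the broken bicharacteristics guaranteed by Lemma \ref{lm_gh}.

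For the first summand I would feed in the already-established structure $A_2^{jk} \in I^{\mu, \mu+1}(\Lambda_{jk}, \Lambda_j) + I^{\mu, \mu+1}(\Lambda_{jk}, \Lambda_k)$ from the preceding subsection, multiply by $\nabla_g v_i$, and use the $3$-transversality of $K_i, K_j, K_k$ at $K_{ijk}$ together with \cite[Lemma 3.3]{Lassas2018} to produce a paired Lagrangian distribution on the union $\Lambda_{ijk} \cup \Lambda^{(1)} \cup \Lambda^{(2)}$. For the second summand, since $v_i v_j v_k$ with the $\mathcal{N}_1$ contraction is a product of three conormal distributions intersecting $3$-transversally, standard calculus gives it as a sum of a conormal distribution on $\Lambda_{ijk}$ of order $3\mu+3$ plus lower order pieces attached to $\Lambda^{(1)} \cup \Lambda^{(2)}$. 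Applying $\tQ_g \in I^{p,l}(\Delta, \Delta^g)$ and using the composition rule for paired Lagrangian operators produces the piece $\tw_0 \in I^{3\mu+3}(\Lambda_{ijk})$ (obtained by division by the symbol of $\square_g$ at $\zi + \zj + \zk$), plus pieces $\tw_1, \tw_2$ on the Hamiltonian flow out $\Lambda_{ijk}^g$ and the broken flow out $\Lambda_{ijk}^b$, plus a remainder $\tw_3$ carrying the partial interactions along $\Lambda^{(1)} \cup \Lambda^{(2)}$. The principal symbol of $\tw_0$ at $(q, \zeta)$ is then the product of the symbols of $v_i, v_j, v_k$ at $(q, \zi), (q, \zj), (q, \zk)$, multiplied by a sum of contractions built from $\mathcal{N}_0, \mathcal{N}_1$, and divided by $|\zi + \zj + \zk|_{g^*}^2$, which recovers the formula referenced in (\ref{eq_tw0ps}).

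For part (b), I would transport the symbol of $\tw_0$ along the forward null bicharacteristic from $(q, \zeta)$ to $(y, \eta)$ using the paired Lagrangian character of $Q_g$, noting that away from $\Lambda^{(1)}$ only $\tw_0$ can contribute to the singularity at $(y,\eta)$ since the remainders $\tw_2, \tw_3$ have wave front sets trapped in $\Lambda^{(1)} \cup \Lambda^{(2)}$ (or pushed forward by broken bicharacteristics staying away from the given covector by the causal independence assumption (\ref{assp_causalindep}) and the null-convexity of $\partial M$). Finally I compose with the trace FIO $\mathcal{R}$ of order $1/4$, whose symbol is nonzero at $(y_|, \eta_|, y, \eta^{\pm})$, to read off $\sigma^{(p)}(\epslamt)(y_|, \eta_|)$ as required by formula (\ref{eq_lambdaps}).

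The main obstacle I expect is the careful bookkeeping needed to isolate the "principal" contribution $\tw_0$ from the flow-out pieces $\tw_1, \tw_2$ and the partial-interaction remainder $\tw_3$ in a way compatible with the boundary-reflection structure. The null-convexity of $\partial M$ and Lemma \ref{lm_intM} ensure that the triple intersection $K_{ijk}$ stays in $\intM$, but one still has to verify that the broken bicharacteristics emanating from $\Lambda_{ijk}$ do not re-enter the transversal intersection region and contaminate $\tw_0$, and that outside $\Lambda^{(1)}$ the symbol transport along $\Theta^b_{q, \zeta}$ is unambiguous even across reflection points, which is where Lemma \ref{lm_gh} and the assumption $(y, \eta) \notin \Lambda^{(1)}$ are used decisively.
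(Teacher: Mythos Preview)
Your overall strategy matches the paper's: split $A_3^{ijk}$ into the $\mathcal{N}_0$-term and the $\mathcal{N}_1$-term, use the paired Lagrangian calculus (the paper invokes \cite[Lemma 3.6]{Lassas2018} rather than Lemma 3.3 for the triple product, and \cite[Lemma 3.9]{Lassas2018} to split $\tQ_g[w_0]\in I^{3\mu+7/2,-1/2}(\Lambda_{ijk},\Lambda_{ijk}^g)$ into the $\tw_0$ and $\tw_1$ pieces), then track wave front sets under $Q_g$ via Proposition \ref{pp_wfb}.

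There is, however, a genuine gap in your treatment of part (b). You write that ``only $\tw_0$ can contribute'' at $(y,\eta)$ and that you will ``transport the symbol of $\tw_0$'' there. But $\tw_0\in I^{3\mu+3}(\Lambda_{ijk})$ has wave front set in $\Lambda_{ijk}$, which sits over $K_{ijk}\subset\intM$; it carries no singularity at the boundary covector $(y,\eta)\in\Lambda_{ijk}^b$. The piece that actually reaches $(y,\eta)$ is the flow-out piece $\tw_1$. The paper does not compute the boundary symbol through the $\tw_0/\tw_1$ decomposition at all; instead it writes $Q_g[w_0]=u^{\text{inc}}+u^{\text{ref}}$ near $(y,\eta)$, with $u^{\text{inc}}=\tQ_g[w_0]$ and $u^{\text{ref}}$ the reflected wave solving $\square_g u^{\text{ref}}=0$ with $\partial_\nu(u^{\text{inc}}+u^{\text{ref}})|_{\partial M}=0$.

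This leads to the second omission: you never carry out the Neumann reflection analysis. Writing $u^\bullet$ locally as an oscillatory integral with phase $\phi^\bullet$ and amplitude $a^\bullet$, the eikonal equation forces $\partial_\nu\phi^{\text{inc}}=-\partial_\nu\phi^{\text{ref}}$, and the Neumann condition then gives $a^{\text{inc}}=a^{\text{ref}}$ on $\partial M$. Hence the traces of $u^{\text{inc}}$ and $u^{\text{ref}}$ have the \emph{same} principal symbol on $\partial M$, and $\sigma^{(p)}(\mathcal{R}(Q_g[w_0]))(y_|,\eta_|)=2\,\sigma^{(p)}(\mathcal{R})(y_|,\eta_|,y,\eta)\,\sigma^{(p)}(u^{\text{inc}})(y,\eta)$. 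Without this step you would miss the factor of $2$ appearing in (\ref{eq_rqw0}) and hence in (\ref{eq_lambdaps}). Simply composing with $\mathcal{R}$ as an FIO of order $1/4$, as you propose, accounts for $u^{\text{inc}}$ but not for $u^{\text{ref}}$.
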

\begin{proof}
By (\ref{eq_A}), we have $A_3^{ijk}  = B_3^{ijk} + C_3^{ijk}$, where
\[
B_3^{ijk} = Q_g[\mathcal{N}_0(\nabla_g v_i, \nabla_g A_2^{jk})], \quad 
C_3^{ijk} = Q_g[v_i \mathcal{N}_1(\nabla_g v_j, \nabla_g v_k)]. 
\]
To analyze $B_3^{ijk}$, since we are away from $J^+(\gamma_{x_i, \xi_i}(t_i^b)) $ and $ J^+(\gamma_{x_i, \xi_i}(t_j^b))$,
first one can write
$\mathcal{N}_0(\nabla_g v_i, \nabla_g A_2^{jk}) = w_0 + w_1 + w_2$
 using \cite[Lemma 3.6]{Lassas2018}, where
\begin{align}\label{eq_Bijk}
\begin{split}
\ & w_0 \in I^{3\mu + 5}(\Lambda_{ijk}), \quad \wfset(w_2) \subset \Lambda^{(1)} \cup \Lambda^{(2)},\\
\ &  \wfset(w_1) \subset \Lambda^{(1)} \cup (\Lambda^{(1)}(\epsilon) \cap \Lambda_{ijk}).
\end{split}
\end{align}
The leading term $w_0$ has principal symbol 
\begin{align*}
\sigma^{(p)}(w_0) (q, \zeta) = (2\pi)^{-2}
\frac{\mathcal{N}_0(\zeta^{(i)}, \zeta^{(j)} + \zeta^{(k)})}{|\zeta^{(j)} + \zeta^{(k)}|^2_{g^*}} \mathcal{N}_0(\zeta^{(j)}, \zeta^{(k)})
\prod_{m= i,j,k}
\sigma^{(p)}(v_m) (q, \zeta^{(m)}),
\end{align*}
where $\zeta = \zeta^{(i)}+ \zeta^{(j)}+ \zeta^{(k)}$. 

Next, we apply $Q_g$ to $w_0, w_1, w_2$ respectively. 
Since each $\wfset_b(w_k) \subset \gsetint \cup \mathcal{H}$, then by Proposition \ref{pp_wfb} and its corollary, the set $\wfset_b(Q_g(w_k))$ is a union of  $\wfset_b(w_k)$ and its flow out under the broken bicharacteristics of $\square_g$. 
For $w_0$, it follows that
\[
\wfset_b(Q_g [w_0] )\subset\pi_b(\Lambda_{ijk} \cup \Lambda_{ijk}^{b}). 
\]
To find its principal symbol, 
we decompose it as
$ Q_g[w_{0}] = u^{\text{inc}} + u^{\text{ref}}$,
where 
$u^{\text{inc}} $ is the incident wave before the reflection on the boundary and $u
^{\text{ref}}$ is the reflected one. 
Recall $\tQ_g$ is the causal inverse of $\square_g$ on $\tM$.
Then $ u^{\text{inc}}  = \tQ_g[w_{0}]$ and by \cite[Proposition 2.1]{Greenleaf1993} we have
\begin{align}\label{r_qw0}
u^{\text{inc}} =\tQ_g[w_{0}] \in I^{3 \mu + \frac{7}{2}, -\frac{1}{2}}  ( \Lambda_{ijk} ,\Lambda_{ijk}^{g} ).
\end{align}
The principal symbol of $u^{\text{inc}}$ at the covector $(q, \zeta) \in \Lambda_{ijk}$ equals to
\[
\sigma^{(p)}(u^{\text{inc}})(q, \zeta) = 
(2\pi)^{-2}|\zeta^{(i)} + \zeta^{(j)} + \zeta^{(k)}|^{-2}_{g^*}
\sigma^{(p)}(w_0) (q, \zeta),
\]
where $\zeta = \zeta^{(i)} + \zeta^{(j)} + \zeta^{(k)}$. 
This is also the principal symbol of $ Q_g[w_{0}]$ at $(q, \zeta)$. 

If $(y, \eta) \in \partial T^*M$ lies along the forward null-bicharacteristic starting at $(q, \zeta) \in \Lambda_{ijk}$, then $(y,\eta)$ belongs to $\Lambda_{ijk}^b$ and is the first point there to hit the boundary.
The transversal intersection implies $\zeta$ has a unique decomposition $\zeta = \sum_{j=1}^3 \zj$ with $\zeta^{(j)} \in \Lambda(x_j, \zeta_j, s_0)$. 
Suppose $(y, \eta)$ is away from $\Lambda^{(1)} \cup \Lambda^{(2)} \cup \Lambda^{(3)}$.
The principal symbol of $u^{\text{inc}} $ at $(y, \eta)$ is
\begin{align*}
\sigma^{(p)}(u^{\text{inc}})(y, \eta)  = (2\pi)^{-2} \sigma^{(p)}(\tQ_g)(y, \eta, q, \zeta)   
\sigma^{(p)}(w_{0})(q, \zeta).
\end{align*}
The reflected wave $u^{\text{ref}}$ satisfies 
$\square_g u^{\text{ref}}= 0$ with $\partial_\nu  (u^{\text{inc}} +   u^{ \text{ref}})|_{\partial M} = 0$ near $(y, \eta)$ to satisfy the Neumann boundary condition. 
Following the analysis in \cite{Stefanov2018} and \cite{Hintz2020}, 
in a small conic neighborhood of $(y, \eta)$ we write
\[
u^{\bullet} = (2 \pi)^{-3} \int e^{i \phi^\bullet(x, \theta)} a^\bullet (x, \theta) \diff \theta, 
\]  
for $\bullet = \text{inc, ref}$, with suitable amplitude $a^\bullet (x, \theta)$ and the phase functions $\phi^\bullet(x, \theta)$ satisfying the eikonal equation 
and $\partial_\nu \phi^\text{inc} = - \partial_\nu \phi^\text{ref}$.
The Neumann boundary condition implies $\partial_\nu \phi^\text{inc} a^{\text{inc}} = - \partial_\nu \phi^\text{ref}a^{\text{ref}}$ and therefore $a^{\text{inc}} = a^{\text{ref}}$ on $\partial M$. 
This implies  
the principal symbols of the restrictions of $ u^{\text{inc}} $ and  $ u^{\text{ref}}$ to $\partial M$ agree. 
Consider the restriction $\mathcal{R}(Q_g[w_{0}])$ of $Q_g[w_{0}]$ on the boundary. 
The analysis above shows that
\begin{align}\label{eq_rqw0}
&\sigma^{(p)}(\mathcal{R}(Q_g[w_{0}]))(y_|, \eta_|) = 2
\sigma^{(p)} (\mathcal{R})(y_|, \eta_|, y, \eta)
\sigma^{(p)}(u^{\text{inc}})(q, \zeta) \\
=&2(2\pi)^{-2} 
\sigma^{(p)} (\mathcal{R})(y_|, \eta_|, y, \eta)
\sigma^{(p)}(\tQ_g)(y, \eta, q, \zeta)   
\frac{\mathcal{N}_0(\zeta^{(i)}, \zeta^{(j)} + \zeta^{(k)})}{|\zeta^{(j)} + \zeta^{(k)}|^2_{g^*}} \mathcal{N}_0(\zeta^{(j)}, \zeta^{(k)}) \nonumber \\
& \times \prod_{m= i,j,k} 
\sigma^{(p)}(v_m) (q, \zeta^{(m)}), 
\nonumber 
\end{align}
where $(y_|, \eta_|)$ is the projection of the lightlike covector $(y, \eta)$ on the boundary. 

For $Q_g[w_1]$, 
first we identify $w_1$ as an element in $\dot{D}'(M)$.  
Its boundary wave front set in $M$ is a subset of $\pi_b(\Lambda^{(1)}) \cup \pi_b(\Lambda^{(1)}(\epsilon) \cap \Lambda_{ijk})$. 
Then $\wfset_b(Q_g[w_1]) $ is contained in the union of this set and its flow out under broken bicharacteristics, i.e. $\pi_b(\Lambda^{(1)} \cup (\Lambda^{(1)}(\epsilon) \cap \Lambda_{ijk}) \cup \Lambda_{ijk}^{b})$,
away from $\bigcup_{j=1}^4 J^+(\gamma_{x_j, \xi_j}(t_j^b))$. 
When $\ep$ goes to zero, the boundary wave front set tends to $\pi_b(\Lambda^{(1)} \cup \Lambda_{ijk}^{b})$.

For $Q_g[w_2]$, the boundary wave front set of $w_2$ in $M$ is a subset of $\pi_b(\Lambda^{(1)} \cup \Lambda^{(2)})$. 
Then $\wfset_b(Q_g(w_2)) $ is contained in the union of this set and its flow out under the broken bicharacteristics, i.e. 
$\pi_b(\Lambda^{(1)} \cup \Lambda^{(2)})$, away from $\bigcup_{j=1}^4 J^+(\gamma_{x_j, \xi_j}(t_j^b))$.

Thus, we write $B_3^{ijk} =  \tw_0 + \tw_1 + \tw_2 + \tw_3$ with 
\begin{align*}
\begin{split}
&\tw_0 \in I^{3\mu + 3}(\Lambda_{ijk}), 
\quad   \wfset_b(\tw_1) \subset \pi_b((\Lambda_{ijk}^{g}(\epsilon) \cap \Lambda_{ijk}) \cup \Lambda_{ijk}^{b}),\\
&\wfset_b(\tw_2) \subset \pi_b(\Lambda^{(1)} \cup (\Lambda^{(1)}(\epsilon) \cap \Lambda_{ijk}) \cup \Lambda_{ijk}^{b}),
\quad  \wfset(\tw_3)\subset \Lambda^{(1)} \cup \Lambda^{(2)},
\end{split}
\end{align*}
where the first two terms $\tw_0, \tw_1$ come from applying \cite[Lemma 3.9]{Lassas2018} to (\ref{r_qw0}).
Indeed, \cite[Lemma 3.9]{Lassas2018} implies that we can decompose (\ref{r_qw0}) as a conormal distribution supported in $\Lambda_{ijk}$ and a distribution microlocally supported in $\Lambda_{ijk}^g(\epsilon)$.
Since $u^{\text{inc}}$ in (\ref{r_qw0}) has wave front set contained in $\Lambda_{ijk} \cup \Lambda_{ijk}^g$, the second distribution in the decomposition must be microlocally supported in $(\Lambda_{ijk}^{g}(\epsilon) \cap \Lambda_{ijk}) \cup \Lambda_{ijk}^{g}$. 
Then we include the wave front set after $\Lambda_{ijk}^{g}$ hits the boundary for the first time to have $\wfset_b(\tw_1)$ in (\ref{eq_Aijk}).

To analyze $C_3^{ijk}$, we write it as $Q_g[h]$. 
Note that $h$ has a decomposition with the same pattern as that of $B_3^{ijk}$ in (\ref{eq_Bijk}). 
The leading term is a conormal distribution in $I^{3\mu + 5}(\Lambda_{ijk})$, with principal symbol 
\begin{align}\label{eq_Cijk}
(2\pi)^{-2}\mathcal{N}_1(\zeta^{(j)}, \zeta^{(k)})
\prod_{m= i,j,k}
\sigma^{(p)}(v_m) (q, \zeta^{(m)})
\end{align}
for $(q, \zeta^{(i)} + \zeta^{(j)} + \zeta^{(k)}) \in \Lambda_{ijk}$. 
Then we apply $Q_g$ and follow the same analysis as above. 

Combining the analysis of $B_3^{ijk}$ and $C_3^{ijk}$, we conclude that we can write 
$A_3^{ijk} = \tw_0 + \tw_1 + \tw_2 + \tw_3$ with $\tw_j$ satisfying (\ref{eq_Aijk}) for $j = 0,1,2,3$. 
In particular, 
the leading term $\tw_0$ has principal symbol given by 
\begin{align}\label{eq_tw0ps}
\sigma^{(p)}(\tw_0)(q, \zeta)  = (2\pi)^{-2}
\frac{\mathcal{A}_3(\zeta^{(i)}, \zeta^{(j)}, \zeta^{(k)})}{|\zeta^{(i)} + \zeta^{(j)} + \zeta^{(k)}|^{2}_{g^*}}
\prod_{m= i,j,k}\sigma^{(p)}(v_m) (q, \zeta^{(m)}),
\end{align}
where
\begin{align*}
\mathcal{A}_3(\zeta^{(i)}, \zeta^{(j)}, \zeta^{(k)})  
=
\frac{\mathcal{N}_0(\zeta^{(i)}, \zeta^{(j)} + \zeta^{(k)})}{|\zeta^{(j)} + \zeta^{(k)}|^2_{g^*}} \mathcal{N}_0(\zeta^{(j)}, \zeta^{(k)}) + \mathcal{N}_1(\zeta^{(j)}, \zeta^{(k)}).
\end{align*}

Suppose $(y, \eta) \in \partial T^*M$
lies along the forward null-bicharacteristic starting at $(q, \zeta) \in \Lambda_{ijk}$ and is 
away from $\Lambda^{(1)} \cup \Lambda^{(2)} \cup \Lambda^{(3)}$.
From the analysis above, we combine (\ref{eq_rqw0}) and (\ref{eq_Cijk}) to have 
\begin{align} \label{eq_lambdaps}
&\sigma^{(p)}(\epslamt)(y_|, \eta_|)
= \sum_{(i,j,k) \in \Sigma(3)} \sigma^{(p)}(\mathcal{R}(B_3^{ijk} + C_3^{ijk})) \\
=&  2\sigma^{(p)} (\mathcal{R})(y_|, \eta_|, y, \eta)\sigma^{(p)}(\tQ_g)(y, \eta, q, \zeta) 
\mathcal{Q}(\zeta^{(1)}, \zeta^{(2)}, \zeta^{(3)}) 
 \prod_{m=1}^3\sigma^{(p)}(v_m) (q, \zeta^{(m)}) \nonumber, 
\end{align}
where $(y_|, \eta_|)$ is the projection of $(y, \eta)$ on the boundary and 
\begin{align*}
\mathcal{Q}(\zeta^{(1)}, \zeta^{(2)}, \zeta^{(3)}) &=  \sum_{(i,j,k) \in \Sigma(3)}  \mathcal{A}_3(\zeta^{(i)}, \zeta^{(j)}, \zeta^{(k)}) \\
&= \sum_{(i,j,k) \in \Sigma(3)}
\frac{\mathcal{N}_0(\zeta^{(i)}, \zeta^{(j)} + \zeta^{(k)})}{|\zeta^{(j)} + \zeta^{(k)}|^2_{g^*}} \mathcal{N}_0(\zeta^{(j)}, \zeta^{(k)}) + \mathcal{N}_1(\zeta^{(j)}, \zeta^{(k)}).
\end{align*}
\end{proof}
\subsection{The recovery of $\mathcal{N}_1(x, \xi)$}
Now we use the nonlinear interaction of three waves to recover the symmetric part of the second quadratic form $\mathcal{N}_1 (x, \xi)$ in the Taylor expansion (\ref{def_taylorw}), when the first one  $\mathcal{N}_0 (x, \xi)$ is null.
From the analysis above, the ND map determines the quantity 
$\mathcal{Q}$ at any $(\zeta^{(1)}, \zeta^{(2)}, \zeta^{(3)})$ in the set
\begin{align*}
\mathcal{Y}=\{(\zeta^{(1)}, \zeta^{(2)}, \zeta^{(3)}) \in (L^*_q M \setminus 0)^3: \ \zj \text{ are linearly independent and } \sum_{j=1}^3 \zj \text{ light-like} \}. 
\end{align*}
In the following, recall the assumption (\ref{assum_n0}) on the quadratic form $w(x,u,\xi)$ that $\mathcal{N}_0$ is null and $\mathcal{N}_1$ is not null. 
By \cite[Lemma 2.1]{Wang2019}, we can assume $\mathcal{N}_0(x, \xi) =C_0(x) g(\xi,\xi)$ in (\ref{def_taylorw}), since only the symmetric part matters there. 
Then a direct computation 
shows that 
\begin{align*}
\sum_{(i,j,k) \in \Sigma(3)}  
\frac{\mathcal{N}_0(\zeta^{(i)}, \zeta^{(j)} + \zeta^{(k)})}{|\zeta^{(j)} + \zeta^{(k)}|^2_{g^*}} \mathcal{N}_0(\zeta^{(j)}, \zeta^{(k)})
=2 C_0^2(x)g(\zeta, \zeta)  = 0
\end{align*}
for $\zeta = \sum_{j=1}^3 \zj$ lightlike. 
Thus, on the set $\mathcal{Y}$ one has
\[
\mathcal{Q}(\zeta^{(1)}, \zeta^{(2)}, \zeta^{(3)}) = \sum_{(i,j,k) \in \Sigma(3)}   \mathcal{N}_1(\zeta^{(j)}, \zeta^{(k)}) = \mathcal{N}_1(\zeta, \zeta) - \sum_{j=1}^3 \mathcal{N}_1(\zj, \zj). 
\]
\begin{lm}\label{lm_recoverN1}
Suppose $\mathcal{N}^{(1)}_1 (x, \xi)$ and $\mathcal{N}^{(2)}_1 (x, \xi)$ are two symmetric quadratic form such that 
\[
 \mathcal{N}^{(1)}_1(\zeta, \zeta) - \sum_{j=1}^3 \mathcal{N}^{(1)}_1(\zj, \zj) = \mathcal{N}^{(2)}_1(\zeta, \zeta) - \sum_{j=1}^3 \mathcal{N}^{(2)}_1(\zj, \zj)
\]
for any 
$(\zeta^{(1)}, \zeta^{(2)}, \zeta^{(3)}) \in \mathcal{Y}$ at $q$. 
Then we conclude that $\mathcal{N}^{(1)}_1 (q, \xi)$ equals to $\mathcal{N}^{(2)}_1 (q, \xi)$ up to null forms. 
\end{lm}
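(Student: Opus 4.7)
Set $W := \mathcal{N}^{(2)}_1 - \mathcal{N}^{(1)}_1$, which is symmetric at $q$. Expanding $W(\zeta,\zeta) = \sum_{j} W(\zeta^{(j)},\zeta^{(j)}) + 2\sum_{i<j} W(\zeta^{(i)},\zeta^{(j)})$ with $\zeta = \sum_j \zeta^{(j)}$ converts the hypothesis into the single identity
\[
\sum_{i<j} W(\zeta^{(i)},\zeta^{(j)}) = 0, \qquad (\zeta^{(1)},\zeta^{(2)},\zeta^{(3)}) \in \mathcal{Y}.
\]
By \cite[Lemma 2.1]{Wang2019}, and because $W$ is symmetric (so its null part can only be a multiple of $g$), the desired conclusion is equivalent to $W(\xi,\xi)=0$ for every null $\xi \in L^*_q M$, i.e.\ $W = \lambda g^*$ at $q$ for some scalar $\lambda$.

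The strategy is to fix a Lorentzian orthonormal basis $(e_0,e_1,e_2,e_3)$ of $T^*_q M$ with $g^* = \mathrm{diag}(-1,1,1,1)$ and compute the matrix entries $W_{ij}:=W(e_i,e_j)$ by feeding explicit one-parameter families of $\mathcal{Y}$-triples into the identity above. The argument is in the same spirit as the null-form recoveries in \cite{Wang2019}, but we must verify the triples we use actually belong to $\mathcal{Y}$.

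Concretely: take $\zeta^{(1)}=e_0+e_1$, $\zeta^{(2)}=e_0-e_1$ and let $\zeta^{(3)}=-e_0+be_1+ce_2+de_3$ with $b^2+c^2+d^2=1$; each $\zeta^{(j)}$ is null, independence holds when $(c,d)\neq 0$, and the sum $(1,b,c,d)$ is null, so $(\zeta^{(1)},\zeta^{(2)},\zeta^{(3)})\in \mathcal{Y}$. Bilinearity turns the identity into $W(\zeta^{(1)}+\zeta^{(2)},\zeta^{(3)}) = -W(\zeta^{(1)},\zeta^{(2)})$, i.e.\ $-2W_{00}+2bW_{01}+2cW_{02}+2dW_{03}$ is constant as $(b,c,d)$ ranges over the unit sphere; this forces $W_{01}=W_{02}=W_{03}=0$ and then, matching the constant, $W_{00}+W_{11}=0$. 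Permuting the role of $e_1$ with $e_2$ and $e_3$ yields $W_{11}=W_{22}=W_{33}=-W_{00}=:\lambda$. Next, take $\zeta^{(1)}=e_0+e_1$, $\zeta^{(2)}=e_0+e_2$ and parametrize $\zeta^{(3)}=ae_0+be_1+ce_2+de_3$ subject to $\zeta^{(3)}$ null and $-2a+b+c=1$ (the sum-null condition); using the relations already derived, the identity collapses to
\[
(b+c+1)\,W_{12} + d\,(W_{13}+W_{23}) = 0
\]
on this real algebraic surface. Evaluating at the valid $\mathcal{Y}$-triples with $(a,b,c,d)=(-1/3,0,1/3,0)$ and $(a,b,c,d)=(-1/2,0,0,1/2)$ yields $W_{12}=0$ and $W_{13}+W_{23}=0$; cycling the spatial indices in the same construction gives $W_{12}=W_{13}=W_{23}=0$. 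Therefore $W=\lambda g^*$ at $q$, and hence $W(\xi,\xi)=0$ on $L^*_q M$, proving the lemma.

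The delicate step is the last one: one must ensure that each family of triples used actually lies in $\mathcal{Y}$ (null, linearly independent, sum null) and that the specializations span enough directions to determine every $W_{ij}$ uniquely. This is where the algebraic richness of the null-sum surface is used, and it is the main place where care is required beyond bookkeeping with the basis.
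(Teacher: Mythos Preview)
Your proof is correct, and it takes a genuinely different route from the paper's. The paper sets $\mathcal{N}_1 := \mathcal{N}_1^{(2)} - \mathcal{N}_1^{(1)}$ as you do, but then argues via a constant--rank theorem: it views the null--sum constraint as $\mathcal{F}^{-1}(0)$ for $\mathcal{F}(\zeta^{(1)},\zeta^{(2)},\zeta^{(3)}) = g(\zeta,\zeta)$ on the open set $\widetilde{\mathcal{Y}}\subset (L^*_qM)^3$, observes that $\mathcal{G}(\zeta^{(1)},\zeta^{(2)},\zeta^{(3)}) = \mathcal{N}_1(\zeta,\zeta) - \sum_j \mathcal{N}_1(\zeta^{(j)},\zeta^{(j)})$ vanishes on $\mathcal{F}^{-1}(0)$, and deduces that $D\mathcal{G}$ and $D\mathcal{F}$ are linearly dependent along $\mathcal{Y}$; comparing Jacobians yields $N_1\zeta = \tfrac{3\lambda}{2}\zeta$ for all null $\zeta$ arising as such sums, hence $\mathcal{N}_1$ is a null form. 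Your approach, by contrast, is an explicit linear--algebra computation in an orthonormal frame: you plug in concrete one--parameter families of $\mathcal{Y}$--triples and read off the matrix entries of $W$ directly. Your argument is more elementary and fully constructive (one can literally follow the arithmetic), and it avoids any appeal to the constant--rank theorem or to the implicit smoothness of the level set; the paper's argument is shorter and coordinate--free, but relies on the differential--geometric machinery and on the claim that the joint map $(\mathcal{F},\mathcal{G})$ has rank one on $\mathcal{Y}$. Both reach the same conclusion $W = \lambda g^*$ at $q$.
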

\begin{proof}
It suffices to show that if a symmetric quadratic form $\mathcal{N}_1$ satisfies $\mathcal{N}_1(\zeta, \zeta) - \sum_{j=1}^3 \mathcal{N}_1(\zj, \zj) = 0$  on $\mathcal{Y}$, then it is a null form. 
We follow the same ideas in the proof of \cite[Proposition]{Wang2019} and start from the set
\[
\widetilde{\mathcal{Y}}=\{(\zeta^{(1)}, \zeta^{(2)}, \zeta^{(3)}) \in (L^*_q M \setminus 0)^3: \ \zj \text{ are linearly independent} \}. 
\]
As an open subset of $ (L^*_q M )^3$, it is a smooth manifold of dimension $9$. 
Consider the smooth map $\mathcal{F}: \widetilde{\mathcal{Y}} \rightarrow \mathbb{R}$ given by ${\mathcal{F}}(\zeta^{(1)}, \zeta^{(2)}, \zeta^{(3)}) = g(\zeta, \zeta)$ with $\zeta =  \sum_{j=1}^3 \zj$. 
Then $\mathcal{Y} = \mathcal{F}^{-1}(0)$ and one can compute the Jacobian of $\mathcal{F}$ by 
\[
D_{(\zeta^{(1)}, \zeta^{(2)}, \zeta^{(3)})} \mathcal{F} = 2(G \zeta, G\zeta, G \zeta),
\]
where $G = (g^{ij}(x))$ is a $4 \times 4$ matrix. 
Since $G$ is non-degenerate, the map $\mathcal{F}$ has constant rank $1$. 
It follows that $\mathcal{Y}$ is a submanifold of dimension $8$. 
Now let the map $\mathcal{G}: \widetilde{\mathcal{Y}} \rightarrow \mathbb{R}$ be given by ${\mathcal{G}}(\zeta^{(1)}, \zeta^{(2)}, \zeta^{(3)}) =\mathcal{N}_1(\zeta, \zeta) - \sum_{j=1}^3 \mathcal{N}_1(\zj, \zj)$. 
It have the Jacobian
\[
D_{(\zeta^{(1)}, \zeta^{(2)}, \zeta^{(3)})} \mathcal{G} =2(GN_1G (\zeta -\zeta^{(1)}) , GN_1G (\zeta -\zeta^{(2)}), GN_1G (\zeta -\zeta^{(3)})),
\]
where $N_1 = (\mathcal{N}_1^{ij}(x))$ is a $4 \times 4$ matrix. 
With assumption $\mathcal{N}_1(\zeta, \zeta) - \sum_{j=1}^3 \mathcal{N}_1(\zj, \zj) = 0$  on $\mathcal{Y}$
we have $\mathcal{G}$ vanishes on $\mathcal{F}^{-1}(0)$.
Consider the map $f = (\mathcal{F}, \mathcal{G}): \widetilde{\mathcal{Y}} \rightarrow \mathbb{R}^2$ and we have $f^{-1}((0,0)) = \mathcal{Y}$ has dimension equal to $8$.
This implies $f$ has constant rank $1$, i,e.
$D \mathcal{F}$ and $D \mathcal{G}$ are linearly dependent. 
It follows there exists a constant $\lambda$ such that
\[
2GN_1G (\zeta -\zj) = 2 \lambda G \zeta, \text{ for } j = 1,2,3.
\]
We sum over $j$ to have $2GN_1G \zeta = 3 \lambda G \zeta$, for any lightlike $\zeta = \sum_{j=1} \zj$ with $(\zeta^{(1)}, \zeta^{(2)}, \zeta^{(3)}) \in \mathcal{Y}$. 
If follows that $\mathcal{N}_1(\zeta, \zeta) = \frac{3\lambda}{2} g(\zeta, \zeta) = 0$ for all lightlike vectors $\zeta$,
which implies it is a null form.

\end{proof}
This lemma and Proposition \ref{pp_Aijk} implies the the recovery of $\mathcal{N}_1$ up to null forms from the ND map with the assumption (\ref{assum_n0}), see Theorem \ref{thm_recoverNM}. 
Indeed, suppose  $\mathcal{N}^{(j)}_1 (x, \xi)$ is a symmetric quadratic form  in the Taylor expansion of $w^{(j)}(x, u, \xi)$, for $j = 1,2$. 
Consider the corresponding ND map $\Lambda_j$ for the semilinear boundary value problem (\ref{ihmNBC}) with the quadratic form  $w^{(j)}(x,u,\xi)$. 
If the ND map $\Lambda_j$ acting on $C^{m+1}([0,T] \times \partial N )$ are equal with $m \geq 5$, then we must have
$
\mathcal{N}^{(2)}_1 (x, \xi) = \mathcal{N}^{(2)}_1 (x, \xi) + c_1(x) g(\xi, \xi)$, for any $ x \in \mathbb{W}$ and $\xi \in T^*_x M$, according to (\ref{eq_lambdaps}).

\section{The nonlinear interaction of four waves}\label{Sec_fourwaves}
In this section, we consider the interaction of four distorted plane waves. 
Recall $(\vec{x}, \vec{\xi})$ is the quadruplet of the covectors $(x_j, \xi_j)$ for $j = 1,2,3,4$, satisfying the assumptions (\ref{assp_causalindep}). 
Let $K_j$ 
be defined as in Section \ref{subsec_plane}.
We assume $K_1, K_2, K_3, K_4$ intersect 4-transversally  
as in \cite{Kurylev2018,Lassas2018}, i.e.
\begin{enumerate}[(1)]
    \item $K_i, K_j$ intersect transversally at a codimension $2$ manifold $K_{ij}$, for $i < j$;
    \item $K_i, K_j, K_k$ intersect at a codimension $3$ submanifold $K_{ijk}$, for $i < j < k$;
    \item $K_j$ for $j = 1, 2,3, 4$ intersect at a point $q$;
    \item for any two disjoint index subsets $I, J \subset \{1, 2, 3, 4\}$, 
    the intersection of $\cap_{i \in I} K_i$ and $\cap_{j \in J} K_j$ is transversal if not empty. 
\end{enumerate}
By Lemma \ref{lm_intM}, in $\nxxi \cap \ntxxi$ we must have $K_{ij}, K_{ijk}$ and $q$ contained in $\intM$. 
We consider four distorted waves $u_j$ associated with $(\vec{x}, \vec{\xi})$ such that
 \[
u_j \in I^{\mu}(\Lambda(x_j, \xi_j, s_0)), \quad j = 1,2,3,4
\] 
solves the linearized wave problem with the principal symbol nonvanishing along $\gamma_{x_j, \xi_j}$. 
Let $f_j = \partial_\nu u_j|_{\partial M}$ and we use the Neumann data $f = \sum_{j = 1}^4 \ep_j f_j$.
Consider $v_j$ solving (\ref{eq_v1}) and it follows that $v_j = u_j\mod C^\infty(M)$ as before. 
We define
\begin{align}\label{def_ufour}
\ufour = \partial_{\epsilon_1}\partial_{\epsilon_2}\partial_{\epsilon_3}\partial_{\epsilon_4} u |_{\epsilon_1 = \epsilon_2 = \epsilon_3 = \epsilon_4=0},
\end{align}
and combine (\ref{expand_u}, \ref{eq_A}) to have 
\begin{align}\label{u4}
& \ufour = - \sum_{(i,j,k,l) \in \Sigma(4)} \mathcal{A}_4^{ijkl} \\
&=- \sum_{(i,j,k,l) \in \Sigma(4)} 
\big( Q_g[\mathcal{N}_0(\nabla_g v_i, \nabla_g A_3^{jkl})] + Q_g[\mathcal{N}_0(\nabla_g A_2^{ij}, \nabla_g A_2^{kl})] +
 \nonumber \\
&  2 Q_g[v_i \mathcal{N}_1(\nabla_g v_j, \nabla_g A_2^{kl})] +  Q_g[A_2^{ij} \mathcal{N}_1(\nabla_g v_k, \nabla_g v_l)]
+  Q_g[v_i v_j \mathcal{M}(\nabla_g v_k, \nabla_g v_l)]
\big). \nonumber
\end{align}

The following proposition describes the singularities of $\ufour$ and those of the ND map. 
\begin{pp}\label{pp_ufour}
    Suppose the four submanifolds $K_1,K_2,K_3, K_4$ intersect 4-transversally at a point $q$. 
    Let $\ufour$ be defined in (\ref{u4}) and $\unionGamma$ defined in (\ref{def_Gamma}). Then in $\nxxi$ and $\ntxxi$ ((see (\ref{def_nxxi},\ref{def_ntxxi}))),  we have the following statements. 
    \begin{enumerate}[(a)]
        \item $\wfset_b(\ufour) \subset 
        \pi_b(\unionGamma \cup \Lambda_q \cup \Lambda_q^b)$. 
        \item Before the first reflection of $\Lambda_q^b$ on the boundary and away from $\unionGamma$, one can regard $\ufour$ as a paired Lagrangian distribution in $\tM$ with  
        \[
        \ufour \in I^{4 \mu + \frac{7}{2}, -\frac{1}{2}}  ( \Lambda_q ,\Lambda_q^{g} ).
        \]
        \item Let $(y, \eta) \in T^*M|_{\partial M}$ be a covector away from $\unionGamma$ and lying along the forward null-bicharacteristic of $\square_g$ starting at $(q, \zeta)$. 
        Then the principal symbol of $\epslam$ at $(y_|, \eta_|)$ is given by (\ref{eq_pslambdafour}), 
        where $(y_|, \eta_|)$ is the projection of $(y, \eta)$ on the boundary. 
    \end{enumerate}
\end{pp}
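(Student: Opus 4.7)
The plan is to mimic the strategy of Proposition \ref{pp_Aijk}: decompose $\ufour$ term-by-term as in (\ref{u4}), analyze each summand using the calculus of conormal and paired Lagrangian distributions together with the product lemmas of \cite{Lassas2018}, apply $Q_g$ via Proposition \ref{pp_wfb} to propagate singularities along broken bicharacteristics, and finally restrict to the boundary using the Neumann reflection argument already used in the three-wave case. Throughout we work in $\nxxi \cap \ntxxi$ where, by Lemma \ref{lm_intM}, all intersection loci $K_{ij}$, $K_{ijk}$, and $K_{ijkl} = \{q\}$ lie in $\intM$, so the conormal and paired Lagrangian calculus on $\tM$ applies.

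First I would handle the five types of summands in (\ref{u4}) separately. The pure product term $v_i v_j \mathcal{M}(\nabla_g v_k, \nabla_g v_l)$, obtained by iterating \cite[Lemma 3.3]{Lassas2018}, decomposes into a leading conormal piece on $\Lambda_q$ with principal symbol proportional to $\mathcal{M}(\zeta^{(k)}, \zeta^{(l)})\prod_m \sigma^{(p)}(v_m)(q, \zeta^{(m)})$ plus remainders microlocally supported in $\Lambda^{(1)} \cup \Lambda^{(2)} \cup \Lambda^{(3)}$. The mixed terms involving $A_2^{ij}$ are treated by feeding the paired Lagrangian description $A_2^{ij} \in I^{\mu,\mu+1}(\Lambda_{ij}, \Lambda_i) + I^{\mu,\mu+1}(\Lambda_{ij}, \Lambda_j)$ into the same product lemmas, producing a leading conormal piece on $\Lambda_q$ with an extra symbol factor $\mathcal{N}_0(\zeta^{(i)}, \zeta^{(j)})/|\zeta^{(i)}+\zeta^{(j)}|_{g^*}^2$ per $A_2^{ij}$. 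The last term $\mathcal{N}_0(\nabla_g v_i, \nabla_g A_3^{jkl})$ uses the decomposition and leading symbol (\ref{eq_tw0ps}) of $A_3^{jkl}$ from Proposition \ref{pp_Aijk}. In every case the remainder terms carry wave front set contained in $\unionGamma$.

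Next I would apply $Q_g$ to each piece. Before the first boundary reflection we identify $Q_g$ with $\tQ_g$ and invoke \cite[Proposition 2.1]{Greenleaf1993}: the leading conormal piece in $I^{4\mu + 6}(\Lambda_q)$ is promoted to an element of $I^{4\mu + \frac{7}{2}, -\frac{1}{2}}(\Lambda_q, \Lambda_q^g)$, gaining an extra symbol factor $|\zeta|_{g^*}^{-2}$ from the principal symbol of $\tQ_g$ on $\Lambda_q$; this gives $(b)$. The remainders from the previous step contribute wave front set inside $\unionGamma$ only. For $(a)$, extending past the first reflection is handled by Proposition \ref{pp_wfb} together with its corollary and Lemma \ref{lm_gh} (using that $\partial M$ is null-convex), which together propagate $\wfset_b$ along broken bicharacteristics and produce the $\Lambda_q^b$ contribution. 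For $(c)$, in a conic neighborhood of the outgoing covector $(y,\eta) \in \Lambda_q^b$ away from $\unionGamma$, I would write $\ufour = u^{\text{inc}} + u^{\text{ref}}$ as oscillatory integrals with phases satisfying the eikonal equation and $\partial_\nu \phi^{\text{inc}} = -\partial_\nu \phi^{\text{ref}}$, so the Neumann condition forces the amplitudes to agree on $\partial M$. Hence $\sigma^{(p)}(\epslam)(y_|, \eta_|) = 2\,\sigma^{(p)}(\mathcal{R})(y_|, \eta_|, y, \eta)\,\sigma^{(p)}(\tQ_g)(y, \eta, q, \zeta)$ times the combined leading conormal symbol on $\Lambda_q$, summed over $\Sigma(4)$.

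The hard part will be the bookkeeping of the principal symbol on $\Lambda_q$: all five classes of terms must combine into a single closed-form expression $\mathcal{Q}_4(\zeta^{(1)}, \zeta^{(2)}, \zeta^{(3)}, \zeta^{(4)})$ in $\mathcal{N}_0, \mathcal{N}_1, \mathcal{M}$ decorated by denominator factors $|\zeta^{(i)}+\zeta^{(j)}|_{g^*}^{-2}$ and $|\zeta^{(i)}+\zeta^{(j)}+\zeta^{(k)}|_{g^*}^{-2}$ coming from the intermediate applications of $\tQ_g$ inside $A_2^{ij}$ and $A_3^{jkl}$. One must also verify that the paired Lagrangian pieces of $A_2^{ij}$ and $A_3^{jkl}$ do not leak additional conormal singularities onto $\Lambda_q$ beyond the stated order; this is guaranteed by the 4-transversality of $K_1, \ldots, K_4$, the transversality of their pairwise and triple intersections to $\Char(\square_g)$, and the fact that we have excluded $\unionGamma$ from the region of interest, which keeps the compositions within the clean regime of the paired Lagrangian calculus.
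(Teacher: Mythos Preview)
Your approach mirrors the paper's almost exactly: the paper also splits $\ufour$ into the five summands $\mathcal{U}^{(4)}_1,\ldots,\mathcal{U}^{(4)}_5$ of (\ref{u4}), extracts from each a leading conormal piece in $I^{4\mu+5}(\Lambda_q)$ (note your $4\mu+6$ is off by one and inconsistent with your own claimed output order after applying $\tQ_g$), applies $\tQ_g$ via \cite[Proposition 2.1]{Greenleaf1993} to land in $I^{4\mu+7/2,-1/2}(\Lambda_q,\Lambda_q^g)$, and uses the same Neumann reflection/doubling argument for the boundary trace to obtain (\ref{eq_pslambdafour}).

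The one place where your sketch is genuinely too quick is the product $\mathcal{N}_0(\nabla_g v_l, \nabla_g \tw_1)$, where $\tw_1$ is the piece of $A_3^{jkl}$ from (\ref{eq_Aijk}) with $\wfset_b(\tw_1)\subset \pi_b((\Lambda_{jkl}^g(\epsilon)\cap\Lambda_{jkl})\cup\Lambda_{jkl}^b)$. The paper devotes a separate lemma (Lemma \ref{lm_w1}) to showing that this product, and its flow-out under $Q_g$, stay inside $\unionGamma$; the argument is not just 4-transversality or clean intersection. It first uses the causal-independence assumption (\ref{assp_causalindep}) to force any intersection of $K_l$ with $\pi(\Lambda_{jkl}^b)$ in $\ntxxi$ to lie in $\intM$, and then the linear-algebra observation that a light-like covector of the form $\zeta^{(l)}+\zeta^{(jkl)}$ with $\zeta^{(l)}\in\Lambda_l$ and $\zeta^{(jkl)}\in\Lambda_{jkl}^b$ (or in $\Lambda_{jkl}^g(\epsilon)\cap\Lambda_{jkl}$ for $\epsilon$ small) must be parallel to one of the two, so no \emph{new} propagating singularities are created under the broken flow. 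You should flag this step explicitly rather than folding it into ``remainders are in $\unionGamma$''.
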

\begin{proof}
 We have five different types of terms for $\ufour$. 
 We denote them by $\mathcal{U}^{(4)}_j$, for $j = 1,\ldots, 5$. 
 We analyze each of them and compute the principal symbols in the five subsections \ref{subsec_U41d} - \ref{subsec_U45d} below. 
 For convenience, we use the notation $\epslamvec$ to denote $\epslam$ and 
 in \ref{subsec_U46d} we analyze its singularities.
 
\subsection{The analysis of $\mathcal{U}^{(4)}_1$}\label{subsec_U41d}
The most complicated term is the first one \[
\mathcal{U}^{(4)}_1 = \sum_{(i,j,k,l) \in \Sigma(4)}  Q_g[\mathcal{N}_0(\nabla_g v_l, \nabla_g A_3^{ijk})].
\]
With the decomposition of $A_3^{ijk} = w_0 + w_1 + w_2 + w_3$ given in (\ref{eq_Aijk}), we analyze the singularities of $\mathcal{N}_0(\nabla_g v_l, \nabla_g w_k)$ for $0 \leq k \leq 3$ in the following and then apply the solution operator $Q_g$ to each of them. Here we replace the notation $\tw_j$ by $w_j$ in the decomposition. 

    First, we compute $\mathcal{N}_0(\nabla_g v_l, \nabla_g w_0)$. 
    Since $w_0 \in I^{3\mu + 3}(\Lambda_{ijk})$, 
    by \cite[Lemma 3.10]{Lassas2018} one can write $\mathcal{N}_0(\nabla_g v_l, \nabla_g w_0)  = w_{0,1} + w_{0,2}$,
    where 
    \[
    w_{0,1} \in I^{4 \mu +5}(\Lambda_q),
    \quad  
    \wfset(w_{0,2}) \subset
    \Lambda_l(\epsilon) \cup \Lambda_{ijk} (\ep).
    \]
    The leading term $w_{0,1}$ has the principal symbol 
    \begin{align*}
    \sigma^{(p)}(w_{0,1})(q, \zeta)
     =(2\pi)^{-3} \mathcal{C}_1(\zeta^{(i)}, \zeta^{(j)}, \zeta^{(k)}, \zeta^{(l)})(\prod_{j=1}^4 \sigma^{(p)}(v_j) (q, \zeta^{(j)})),
    \end{align*}
    where $\zeta =\sum_{j=1}^4 \zeta^{(j)}$ and by (\ref{eq_tw0ps}) we have
    \begin{align}\label{C1}
    \mathcal{C}_1(\zeta^{(i)}, \zeta^{(j)}, \zeta^{(k)}, \zeta^{(l)}) 
    &= \frac{\mathcal{N}_0(\zeta^{(l)}, \zeta^{(i)}+ \zeta^{(j)}+ \zeta^{(k)})}{|\zi + \zj + \zk|^2_{g^*}} \times \\
    &(\frac{\mathcal{N}_0(\zeta^{(i)}, \zeta^{(j)} + \zeta^{(k)})}{|\zeta^{(j)} + \zeta^{(k)}|^2_{g^*}} \mathcal{N}_0(\zeta^{(j)}, \zeta^{(k)}) + \mathcal{N}_1(\zeta^{(j)}, \zeta^{(k)})). \nonumber
    \end{align}
    Then we apply $Q_g$ to $w_{0,1}$ and Proposition \ref{pp_wfb} with its corollary implies 
    \begin{align*}
    &\wfset_b(Q_g[w_{0,1}])
    \subset \pi_b(\Lambda_q \cup \Lambda_q^{b}) \subset \gsetint \cup \mathcal{H}.  
    \end{align*}
    To find the principal symbol,
    we decompose it as
    $ Q_g[w_{0,1}] = u^{\text{inc}} + u^{\text{ref}}$ as before,
    where 
    $u^{\text{inc}} $ is the incident wave before the reflection and $u
    ^{\text{ref}}$ is the reflected one. 
    Then $ w^{\text{inc}}  = \tQ_g[w_{0,1}]$ and we have
    \[
    w^{\text{inc}} \in I^{4 \mu + \frac{7}{2}, -\frac{1}{2}}  ( \Lambda_q ,\Lambda_q^{g} ).
    \]
    If $(y, \eta) \in T^*M|_{\partial M}$ lies along the forward null-bicharacteristic starting at $(q, \zeta)$, then it belongs to $\Lambda_q^b$ and is the first point where $\Lambda_q^b$ touches the boundary.
    Near $(y, \eta)$, the reflected wave satisfies $\square_g u^{\text{ref}} = 0$ and $(\partial_\nu u^{\text{inc}} +  \partial_\nu  u^{ \text{ref}})|_{\partial M} = 0 $. 
    We follow the same arguments as before, 
    and one can see 
    the principal symbols of the restrictions of $ u^{\text{inc}} $ and  $ u^{\text{ref}}$ to $\partial M$ agree. 
    The 4-transversal intersection assumption indicates each $\zeta \in \Lambda_q$ has a unique decomposition $\zeta = \sum_{j=1}^4 \zj$, with $\zj \in \Lambda(x_j, \xi_j, s_0)$. 
    Then the restriction of $Q_g[w_{0,1}]$ to the boundary has the principal symbol
    \begin{align}\label{eq_psrua}
    \sigma^{(p)}(\mathcal{R}(Q_g[w_{0,1}]))& (y_|, \eta_|) 
     = 2(2\pi)^{-2} \sigma^{(p)}(\mathcal{R})(y_|, \eta_|, y, \eta) \times\\
    & \sigma^{(p)}(\tQ_g)(y, \eta, q, \zeta)   
    \mathcal{C}_1(\zeta^{(i)}, \zeta^{(j)}, \zeta^{(k)}, \zeta^{(l)})(\prod_{j=1}^4 \sigma^{(p)}(v_j) (q, \zeta^{(j)})), \nonumber
    \end{align}
    if $(y, \eta)$ is away from $\unionGamma$. 
    
    For $Q_g[w_{0,2}]$,  
    we note that a broken bicharacteristic arc $\nu(t)$ depends on the initial point $\nu(0)$ in a continuous way, since from the definition it is the composition of the Hamiltonian flow of $\square_g$ and the reflection of transversal vectors on the boundary. 
    Thus, 
    the flow out of $\Lambda_l(\epsilon)$ 
    under the broken bicharacteristic arcs is a neighborhood of $\Lambda_l$, 
    and
    it tends to $\Lambda_l$ 
    when $\ep $ goes to $0$.
    Similarly the flow out of $\Lambda_{ijk}(\ep)$ under the broken bicharacteristic arcs tends to $\Lambda_{ijk}^b$ as $\ep $ goes to $ 0$. 
    
    Second, we analyze the singularities of $Q_g[\mathcal{N}_0(\nabla_g v_l, \nabla_g w_1)]$ in the following  lemma.
    \begin{lm}\label{lm_w1}
        As above assume we have  
        \[
        \wfset( v_l) \subset \Lambda_l, \quad \wfset_b( w_1) \subset \pi_b((\Lambda_{ijk}^g(\epsilon) \cap \Lambda_{ijk}) \cup \Lambda_{ijk}^{b}). 
        \]
        Then in $\nxxi$ and $\ntxxi$,
        the set
        $\wfset_b(Q_g(\mathcal{N}_0(\nabla_g v_l, \nabla_g w_1)))$ tends to a subset of 
        $\pi_b(\Lambda_l \cup \Lambda_{ijk} \cup \Lambda_{ijk}^{b})$, 
       as $\ep$ goes to zero.   
    \end{lm}
\begin{proof}
    First we consider all possible intersections of $K_l$ and 
    $\pi(\Lambda_{ijk}^b)$
    in $M$, where $\pi: T^*M \rightarrow M$ is the natural projection. 
    We claim that in $\nxxi \cap \ntxxi$ all intersections happens in $\intM$ 
    when $s_0$ is small enough. 
    Similarly to Lemma \ref{lm_intM}, we prove this by contradiction. 
    Assume there is $p \in \partial M$ 
    in the intersection of $K_l$ and $\pi(\Lambda_{ijk}^b)$. 
    If $p \notin \gamma_l $, then by choosing small enough $s_0$ one has $p \notin K_l$. 
    This implies that in $\ntxxi$ we must have $p = \gamma_l(t_l^0)$.
    In this case, one has $\gamma_l(t_l^0) \in \pi(\Lambda_{ijk}^b))$, 
    which is contained in the casual future of $K_{ijk}$ and therefore in the casual futures of $x_i,x_j,x_k$. 
    This contradicts with the assumption (\ref{assp_causalindep}). 
    
    Thus the intersection of the two wavefront sets happens only in $T^*\intM$. 
    We have two possibilities, at $q$ or at some $ p \in  \pi(\Lambda_{ijk}^b))\setminus K_{ijk}$. 
    Near $q$, one can locally regard $w_1$ as a distribution in $\tM$ 
    microlocally supported
    in $\Lambda_{ijk}^g(\epsilon) \cap \Lambda_{ijk}$, which is a small neighborhood of 
    $\Lambda_{ijk} \cap \Char(\square_g)$. 
    By \cite[Theorem 
    8.2.10]{Hoermander2003}, the singularities of $\mathcal{N}_0(\nabla_g v_l, \nabla_g w_1)$ locally near $q$ is in the span of $\Lambda_l$ and $\Lambda_{ijk}^g(\epsilon) \cap \Lambda_{ijk}$.
    Then these singularities will propagate along the broken bicharacteristic arcs after we apply $Q_g$ to $\mathcal{N}_0(\nabla_g v_l, \nabla_g w_1)$, if they are in $\Char(\square_g)$.
    Notice that if a covector 
    \[
    \zeta = \zeta^{(l)} + \zeta^{(ijk)}, \quad \text{with }(q, \zeta^{(l)}) \in \Lambda_l, 
    \ (q, \zeta^{(ijk)}) \in \Lambda_{ijk}^g(\epsilon) \cap \Lambda_{ijk}
    \]  
    is lightlike, 
    then as stated in \cite{Kurylev2018} one has $\zeta^{(ijk)}$ is not lightlike. 
    We can always choose $\ep$ small enough so that such $(q, \zeta^{(ijk)})$ is not contained in 
    the small neighborhood of 
    $\Lambda_{ijk} \cap \Char(\square_g)$. 
    Thus, we must have $\zeta$ is proportional to either $\zeta^{(l)}$ or $\zeta^{(ijk)} \in \Lambda_{ijk} \cap \Char(\square_g)$. 
    It follows that there are no new singularities produced under the broken flow. 
    
    Away from $q$, if $\gamma_l$ and $\pi(\Lambda_{ijk}^b)$ do not intersect, 
    then there are no new singularities produced either. 
    If $\gamma_l$ intersects with $\pi(\Lambda_{ijk}^b))$ at a point $p$, then the intersection must be transversal.
    Near $p$, we microlocally regard $w_1$ as a distribution supported in $\pi(\Lambda_{ijk}^{b}) \cap \intM$. 
    In this case the singularities of $\mathcal{N}_0(\nabla_g v_l, \nabla_g w_1)$ is in the span of $\Lambda_l$ and $\Lambda_{ijk}^{b}$ near $p$. The same argument shows that a light-like covector $\zeta$ as a sum of $\zeta^{(l)}$ in $\Lambda_l$ and $\zeta^{(ijk)}$ in $\Lambda_{ijk}^{b}$ must be parallel to one of them.
    Thus, we have no new singularities produced after applying $Q_g$ to $\mathcal{N}_0(\nabla_g v_l, \nabla_g w_1)$.       
    To conclude, when $\ep$ goes to zero, $\wfset_b(Q_g(\mathcal{N}_0(\nabla_g v_l, \nabla_g w_1))$ tends to a subset of $\pi_b(\Lambda_l\cup \Lambda_{ijk} \cup \Lambda_{ijk}^{b})$. 
\end{proof}


    Third, we analyze the singularities of $Q_g[\mathcal{N}_0(\nabla_g v_l, \nabla_g w_2)]$. 
    Recall 
    \[
    \wfset(w_2) \subset \pi_b(\Lambda^{(1)} \cup (\Lambda^{(1)}(\epsilon) \cap \Lambda_{ijk}) \cup \Lambda_{ijk}^{b}).
    \] 
    The same arguments in Lemma \ref{lm_w1} work for this case.
    To conclude, in $\nxxi$ and $\ntxxi$ we have 
         $
        \wfset_b(\mathcal{N}_0(\nabla_g v_l, \nabla_g w_2)) $ tends to a subset of
       $\pi_b(\Lambda^{(1)} \cup \Lambda_{ijk} \cup \Lambda_{ijk}^{b})$, as $\epsilon$ goes to zero. 
    
    Forth, we analyze the singularities of $Q_g[\mathcal{N}_0(\nabla_g v_l, \nabla_g w_3)]$. 
    In this case, we have
    \[
    \wfset(\nabla_g w_3)\subset \Lambda^{(1)} \cup \Lambda^{(2)}. 
    \]
    We use the same arguments in \cite[Propostion 3.11]{Lassas2018} to conclude 
    $
    \wfset_b(\mathcal{N}_0(\nabla_g v_l, \nabla_g w_3))$ tends to a subset of 
    $\pi_b(\Lambda^{(1)} \cup \Lambda_{ijk} \cup \Lambda_{ijk}^{b})$ as $\ep$ goes to zero.

     We emphasize that all the analysis above happens in the set $\nxxi \cap \ntxxi$.
     To summarize, the distribution $Q_g[\mathcal{N}_0(\nabla_g v_l, \nabla_g (w_1 + w_2 + w_3))]$ does not have new singularities other than those in 
     \[
     \pi_b(\Lambda^{(1)} \cup \Lambda^{(2)} \cup \Lambda^{(3)} \cup \Lambda^{(3),b}). 
     \]
     If we treat it as an element in $\mathcal{D}"(\tM)$, then it follows that $\mathcal{U}^{(4)}_1  = Q_g[w_{0,1}]$ away from $\unionGamma$, and therefore the conclusions of $ Q_g[w_{0,1}]$ in this subsection apply to $\mathcal{U}^{(4)}_1$. 
    
    
    
   
    \subsection{The analysis of $\mathcal{U}^{(4)}_2$}\label{subsec_U42d}
    Recall $\mathcal{U}^{(4)}_2 = Q_g[\sum_{(i,j,k,l)\in \Sigma(4)} h_{ijkl}]$, where we write 
    \[
    h_{ijkl} =\mathcal{N}_0(\nabla_g A_2^{ij}, \nabla_g A_2^{kl}).
    \]
    Away from $\bigcup_{j=1}^4 J^+(\gamma_{x_j, \xi_j}(t_j^b))$, we have 
    \[
    \nabla_g A_2^{ij} \in I^{\mu+1, \mu+2}(\Lambda_{ij}, \Lambda_i) + I^{\mu+1, \mu+2}(\Lambda_{ij}, \Lambda_j).
    \]
    The same analysis as in \cite{Kurylev2018, Lassas2018, Wang2019} applies to $h_{ijkl}$. 
    Indeed, by \cite[Lemma 3.8]{Lassas2018}, 
    one has
    \[
    h_{ijkl
    } = w_0 + w_1 + w_2,
    \]
    where 
    \begin{align*}
    w_0 \in I^{4 \mu + 5}(\Lambda_q) , 
    \quad \wfset(w_1) \subset
    \Lambda^{(1)} \cup \Lambda^{(2)} \cup \Lambda^{(3)},
    \quad \wfset(w_2) \subset \Lambda^{(1)}(\epsilon).
    \end{align*}
    The principal symbol for the leading term $w_0$ is 
    \[
    \sigma^{(p)}(w_0)(q, \zeta) = (2\pi)^{-3}  \mathcal{C}_2(\zeta^{(i)}, \zeta^{(j)}, \zeta^{(k)}, \zeta^{(l)})
    \Pi_{j=1}^4 \sigma^{(p)}(v_j) (q, \zeta^{(j)}),
    \]
    where 
    \begin{align}\label{C2}
    \mathcal{C}_2(\zeta^{(i)}, \zeta^{(j)}, \zeta^{(k)}, \zeta^{(l)})
    &=\mathcal{N}_0(\zeta^{(i)}+ \zeta^{(j)}, \zeta^{(k)}+ \zeta^{(l)})\frac{\mathcal{N}_0(\zk,\zl)}{| \zk + \zl|^2_{g^*}} \frac{\mathcal{N}_0(\zi,\zj)}{| \zi + \zj|^2_{g^*}}.
    \end{align}
    The same arguments in Subsection \ref{subsec_U41d} show that 
    \begin{align*}
    &\wfset_b(Q_g[w_{0}])
    \subset \pi_b(\Lambda_q \cup \Lambda_q^{b}), 
    \quad \wfset_b(Q_g[w_{1}])
    \subset \pi_b(\Lambda^{(1)} \cup \Lambda^{(2)} \cup  \Lambda^{(3)} \cup \Lambda^{(3),b}), 
    \end{align*}
    and $\wfset_b(Q_g[w_{2}]) $ tends to a subset of 
    $\pi_b(\Lambda^{(1)})$
    when $\ep$ goes to zero. 
    To find the principal symbol of the leading term $Q_g[w_{0}]$, we decompose it as
    $ Q_g[w_{0}] = u^{\text{inc}} + u^{\text{ref}}$ as before.
    The same arguments show that 
    if $(y, \eta) \in T^*M|_{\partial M}$ is away from $\unionGamma$ and lies along the forward null-bicharacteristic starting at $(q, \zeta)$, then we have 
    \begin{align}\label{eq_psrub}
    \sigma^{(p)}(\mathcal{R}(Q_g[w_{0}]))&(y_|, \eta_|) 
    = 2(2\pi)^{-2} \sigma^{(p)}(\mathcal{R})(y_|, \eta_|, y, \eta) \times\\
    &\sigma^{(p)}(\tQ_g)(y, \eta, q, \zeta)   
    \mathcal{C}_2(\zeta^{(i)}, \zeta^{(j)}, \zeta^{(k)}, \zeta^{(l)})(\prod_{j=1}^4 \sigma^{(p)}(v_j) (q, \zeta^{(j)})), \nonumber
    \end{align}
    where $(y_|, \eta_|) $ is the projection of $(y,\eta)$. 
    
    \subsection{The analysis of $\mathcal{U}^{(4)}_3$}\label{subsec_U43d}
    Recall $\mathcal{U}^{(4)}_3 = Q_g[\sum_{(i,j,k,l)\in \Sigma(4)} h_{ijkl}]$, where we write 
    \[
    h_{ijkl} =v_i \mathcal{N}_0(\nabla_g v_j, \nabla_g A_2^{kl}).
    \]
    Away from $\bigcup_{j=1}^4 J^+(\gamma_{x_j, \xi_j}(t_j^b))$, the same analysis as in \cite{Kurylev2018, Lassas2018, Wang2019} applies to $h_{ijkl}$. 
    Indeed, by the proof of \cite[Proposition 3.11]{Lassas2018}, see also \cite[Lemma 3.6, Lemma 3.10]{Lassas2018}, 
    one has
    \[
    h_{ijkl} = w_0 + w_1 + w_2,
    \]
    where 
    \begin{align*}
    &w_0 \in I^{4 \mu + 5}(\Lambda_q) , \quad \wfset(w_1) \subset
    \Lambda^{(1)} \cup \Lambda^{(2)} \cup \Lambda^{(3)}, \quad 
    \wfset(w_2) \subset \Lambda^{(1)}(\ep) \cup \Lambda^{(3)}(\ep)
    \end{align*}
    The principal symbol for the leading term $w_0$ is 
    \[
    \sigma^{(p)}(w_0)(q, \zeta) = (2\pi)^{-3}  \mathcal{C}_3(\zeta^{(i)}, \zeta^{(j)}, \zeta^{(k)}, \zeta^{(l)})
    \Pi_{j=1}^4 \sigma^{(p)}(v_j) (q, \zeta^{(j)}),
    \]
    where 
    \begin{align}\label{C3}
    \mathcal{C}_3(\zeta^{(i)}, \zeta^{(j)}, \zeta^{(k)}, \zeta^{(l)}) 
    &=2 \mathcal{N}_0( \zeta^{(j)}, \zeta^{(k)}+ \zeta^{(l)})\frac{\mathcal{N}_0(\zk,\zl)}{| \zk + \zl|^2_{g^*}}.
    \end{align}   
    The same arguments in Subsection \ref{subsec_U41d} show that 
    \begin{align*}
    \wfset_b(Q_g[w_{0}])
    \subset \pi_b(\Lambda_q \cup \Lambda_q^{b}), 
    \quad \wfset_b(Q_g[w_{1}])
    \subset  \pi_b(\Lambda^{(1)} \cup \Lambda^{(2)} \cup  \Lambda^{(3)} \cup \Lambda^{(3),b}),
    \end{align*}
    and $\wfset_b(Q_g[w_{2}])$ tends to a subset of 
    $\pi_b(\Lambda^{(1)} \cup \Lambda^{(3),b})$ 
    as $\ep$ goes to zero, since the broken flow is continuous. 

    Similarly, if $(y, \eta) \in T^*M|_{\partial M}$ is away from $\unionGamma$ and lies along the forward null-bicharacteristic starting at $(q, \zeta)$, then we have 
    \begin{align}\label{eq_psruc}
    \sigma^{(p)}(\mathcal{R}(Q_g[w_{0}]))&(y_|, \eta_|) 
    = 2(2\pi)^{-2} \sigma^{(p)}(\mathcal{R})(y_|, \eta_|, y, \eta) \times\\
    &  \sigma^{(p)}(\tQ_g)(y, \eta, q, \zeta)   
    \mathcal{C}_3(\zeta^{(i)}, \zeta^{(j)}, \zeta^{(k)}, \zeta^{(l)})(\prod_{j=1}^4 \sigma^{(p)}(v_j) (q, \zeta^{(j)})), \nonumber
    \end{align}
    where $(y_|, \eta_|) $ is the projection of $(y,\eta)$. 
    \subsection{Analyze $\mathcal{U}^{(4)}_4$}\label{subsec_U44d}
    Recall $\mathcal{U}^{(4)}_4 = Q_g[\sum_{(i,j,k,l)\in \Sigma(4)} h_{ijkl}]$, where we write 
    \[
    h_{ijkl} =A_2^{ij} \mathcal{N}_0(\nabla_g v_k, \nabla_g v_l).
    \]
    The same analysis as in \cite{Kurylev2018, Lassas2018, Wang2019} applies to $h_{ijkl}$. 
    Indeed, by \cite[Lemma 3.8]{Lassas2018}, 
    one has
    \[
    h_{ijkl
    } = w_0 + w_1 + w_2,
    \]
    where 
    \begin{align*}
        w_0 \in I^{4 \mu + 5}(\Lambda_q) , 
        \quad \wfset(w_1) \subset
        \Lambda^{(1)} \cup \Lambda^{(2)} \cup \Lambda^{(3)},
        \quad \wfset(w_2) \subset \Lambda^{(1)}(\epsilon).
    \end{align*}
    In this case, the principal symbol for the leading term $w_0$ is 
    \[
    \sigma^{(p)}(w_0)(q, \zeta) = (2\pi)^{-3}  \mathcal{C}_4(\zeta^{(i)}, \zeta^{(j)}, \zeta^{(k)}, \zeta^{(l)})
    \Pi_{j=1}^4 \sigma^{(p)}(v_j) (q, \zeta^{(j)}),
    \]
    where 
    \begin{align}\label{C4}
    \mathcal{C}_4(\zeta^{(i)}, \zeta^{(j)}, \zeta^{(k)}, \zeta^{(l)}) 
    &= \frac{\mathcal{N}_0(\zi,\zj)}{| \zi + \zj|^2_{g^*}} \mathcal{N}_1(\zk,\zl).
    \end{align}
    The same arguments in Subsection \ref{subsec_U41d} show that 
    \begin{align*}
    \wfset_b(Q_g[w_{0}])
    \subset \pi_b(\Lambda_q \cup \Lambda_q^{b}), 
    \quad \wfset_b(Q_g[w_{1}])
    \subset  \pi_b(\Lambda^{(1)} \cup \Lambda^{(2)} \cup  \Lambda^{(3)} \cup \Lambda^{(3),b}),
    \end{align*}
    and $\wfset_b(Q_g[w_{2}])$ tends to a subset of 
    $\pi_b(\Lambda^{(1)})$ 
    as $\ep$ goes to zero.
    Similarly, if $(y, \eta) \in T^*M|_{\partial M}$ is away from $\unionGamma$ and lies along the forward null-bicharacteristic starting at $(q, \zeta)$, then we have 
    \begin{align}\label{eq_psrud}
    \sigma^{(p)}(\mathcal{R}(Q_g[w_{0}]))&(y_|, \eta_|) 
    = 2(2\pi)^{-2} \sigma^{(p)}(\mathcal{R})(y_|, \eta_|, y, \eta) \times\\
    &  \sigma^{(p)}(\tQ_g)(y, \eta, q, \zeta)   
    \mathcal{C}_4(\zeta^{(i)}, \zeta^{(j)}, \zeta^{(k)}, \zeta^{(l)})(\prod_{j=1}^4 \sigma^{(p)}(v_j) (q, \zeta^{(j)})), \nonumber
    \end{align}
    where $(y_|, \eta_|) $ is the projection of $(y,\eta)$. 
    \subsection{The analysis of $\mathcal{U}^{(4)}_5$}\label{subsec_U45d}
    Recall $\mathcal{U}^{(4)}_5 = Q_g[\sum_{(i,j,k,l)\in \Sigma(4)} h_{ijkl}]$, where we write 
    \[
    h_{ijkl} = v_i v_j \mathcal{M}(\nabla_g v_k, \nabla_g v_l).
    \]
    The same analysis as in \cite{Kurylev2018, Lassas2018, Wang2019} applies to $h_{ijkl}$. 
    Indeed, by \cite[Lemma 3.8]{Lassas2018}, 
    one has
    \[
    h_{ijkl
    } = w_0 + w_1 + w_2,
    \]
    where 
    \begin{align*}
    w_0 \in I^{4 \mu + 5}(\Lambda_q) , 
    \quad \wfset(w_1) \subset
    \Lambda^{(1)} \cup \Lambda^{(2)} \cup \Lambda^{(3)},
    \quad \wfset(w_2) \subset \Lambda^{(1)}(\epsilon).
    \end{align*}
    The principal symbol for the leading term $w_0$ is 
    \[
    \sigma^{(p)}(w_0)(q, \zeta) = (2\pi)^{-3}  \mathcal{M}(q, \zeta^{(k)}, \zeta^{(l)}) 
    \prod_{j=1}^4 \sigma^{(p)}(v_j) (q, \zeta^{(j)}).
    \]
   The same arguments in Subsection \ref{subsec_U41d} show that 
   \begin{align*}
   \wfset_b(Q_g[w_{0}])
   \subset \pi_b(\Lambda_q \cup \Lambda_q^{b}), 
   \quad \wfset_b(Q_g[w_{1}])
   \subset  \pi_b(\Lambda^{(1)} \cup \Lambda^{(2)} \cup  \Lambda^{(3)} \cup \Lambda^{(3),b}),
   \end{align*}
   and $\wfset_b(Q_g[w_{2}])$ tends to a subset of 
   $\pi_b(\Lambda^{(1)})$ 
   as $\ep$ goes to zero.
    
    Next, 
    if $(y, \eta) \in T^*M|_{\partial M}$ is away from $\unionGamma$ and lies along the forward null-bicharacteristic starting at $(q, \zeta)$, then we have 
    \begin{align}\label{eq_psrue}
    \sigma^{(p)}(\mathcal{R}(Q_g[w_{0}]))&(y_|, \eta_|) 
    = 2(2\pi)^{-2} \sigma^{(p)}(\mathcal{R})(y_|, \eta_|, y, \eta) \times\\
    &  \sigma^{(p)}(\tQ_g)(y, \eta, q, \zeta)   
    \mathcal{M}(q, \zeta^{(k)}, \zeta^{(l)}) (\prod_{j=1}^4 \sigma^{(p)}(v_j) (q, \zeta^{(j)})), \nonumber
    \end{align}
    where $(y_|, \eta_|) $ is the projection of $(y,\eta)$. 
    
    
    \subsection{The analysis of $\epslamvec$} \label{subsec_U46d}
    With $\ufour$ defined in (\ref{def_ufour}), one has
    \[
    \epslam  = \ufour|_{\partial M} =\sum_{j=1}^5 \mathcal{R}(\mathcal{U}^{(4)}_j),
    \]
    where $\mathcal{R}$ is the restriction to $\partial M$. 
    Let $(y, \eta) \in \partial T^*M$ be a covector
    lying along the forward null-bicharacteristic starting at $(q, \zeta) \in \Lambda_q$ and away from $\unionGamma$. 
    Notice that $\zeta \in \Lambda_q$ has the unique decomposition $\zeta = \sum_{j=1}^4 \zj$ with $\zj \in \Lambda(x_j,\xi_j, s_0)$.  
    From the analysis above, we combine (\ref{eq_psrua}, \ref{eq_psrub}, \ref{eq_psruc}, \ref{eq_psrud}, \ref{eq_psrue}) and summarize over different permutations $(i,j,k,l)\in \Sigma(4)$ to have 
            \begin{align}\label{eq_pslambdafour}
            \sigma^{(p)}&(\epslamvec)(y_|, \eta_|) = -
              2 (2 \pi)^{-3}\sigma^{(p)}(\mathcal{R})(y_|, \eta_|, y, \eta) \times  \\
             &\sigma^{(p)}(\widetilde{Q}_g)(y, \eta, q, \zeta) (\mathcal{P} + \mathcal{C})(\zeta^{(1)}, \zeta^{(2)}, \zeta^{(3)}, \zeta^{(4)}) 
             (\prod_{j=1}^4 \sigma^{(p)}(v_j) (q, \zeta^{(j)})) \nonumber,
            \end{align}
            where $\zeta =\sum_{j=1}^4 \zeta^{(j)} $ and we have 
            \begin{align}\label{P}
            \mathcal{P}(\zeta^{(1)}, \zeta^{(2)}, \zeta^{(3)}, \zeta^{(4)})  &
            = \sum_{(i,j,k,l) \in \Sigma(4)} \mathcal{M}(q, \zeta^{(k)}, \zeta^{(l)}) \\
            &= 2 (\mathcal{M}(q, \zeta, \zeta) - \sum_{j=1}^4\mathcal{M}(q, \zeta^{(j)}, \zeta^{(j)})),\nonumber
            \end{align}
            as well as 
            \begin{align*}
            \mathcal{C}(\zeta^{(1)}, \zeta^{(2)}, \zeta^{(3)}, \zeta^{(4)}) 
            = \sum_{(i,j,k,l) \in \Sigma(4)}  
            (\mathcal{C}_1+ \mathcal{C}_2 +\mathcal{C}_3+\mathcal{C}_4) (\zeta^{(i)}, \zeta^{(j)}, \zeta^{(k)}, \zeta^{(l)})
            \end{align*}
            with $\mathcal{C}_j$ defined in (\ref{C1}, \ref{C2}, \ref{C3}, \ref{C4}) for $m = 1,2,3,4$.
    
\end{proof}
As the proof of \cite[propostion 4.3]{Wang2019} shows, the quantity $\mathcal{C}$ vanishes on $\mathcal{X}$ if $\mathcal{N}_0$ and $\mathcal{N}_1$ are null forms, where 
\[
\mathcal{X} = \{(\zeta^{(1)}, \zeta^{(2)}, \zeta^{(3)}, \zeta^{(4)}) \in (L_{q_0}^* M)^4: \ \zeta^{(j)} \text{ are linearly independent; } \sum_{j=1}^4 \zeta^{(j)} \text{ light-like} \}.
\]
In the following, we consider the assumption (\ref{assum_n0n1}) that $\mathcal{N}_0, \mathcal{N}_1$ are null forms and  $\mathcal{M}$ is not null.
Then by \cite[Propositon 4.3]{Wang2019}, we have the following proposition that shows the principal symbol of $\mathcal{U}^{(4)}$ is not vanishing on any open set of $\mathcal{X}$. 
Then we can always make small perturbation to have a nonvanishing principal symbol, see the proof of Lemma \ref{lm36}. This is an analog to 
\cite[Proposition 3.4]{Kurylev2018}.
\begin{pp}\label{pp34}
    If $\mathcal{M}(x,\zeta, \zeta)$ is not null, then the function 
    $\mathcal{P}(\zeta^{(1)}, \zeta^{(2)}, \zeta^{(3)}, \zeta^{(4)})$ defined in (\ref{P}) at the point $q$ is nonvanishing on any open set of $\mathcal{X}$.
\end{pp}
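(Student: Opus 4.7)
The plan is to argue by contrapositive, following the Jacobian/constant-rank strategy already used in Lemma \ref{lm_recoverN1} and mirroring \cite[Proposition 4.3]{Wang2019}. Suppose that $\mathcal{P}(\zeta^{(1)},\zeta^{(2)},\zeta^{(3)},\zeta^{(4)}) = 2(\mathcal{M}(q,\zeta,\zeta) - \sum_j \mathcal{M}(q,\zeta^{(j)},\zeta^{(j)}))$ vanishes on some nonempty open set $\mathcal{X}_0 \subset \mathcal{X}$. I will show that this forces $\mathcal{M}(q,\,\cdot\,)$ to be a null form, contradicting the hypothesis.

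First, set $\widetilde{\mathcal{X}} = \{(\zeta^{(1)},\ldots,\zeta^{(4)}) \in (L_q^*M\setminus 0)^4: \zeta^{(j)} \text{ linearly independent}\}$, an open subset of $(L_q^*M)^4$ of dimension $12$. Define $\mathcal{F}(\zeta^{(1)},\ldots,\zeta^{(4)}) = g^*(\zeta,\zeta)$ and $\mathcal{G}(\zeta^{(1)},\ldots,\zeta^{(4)}) = \mathcal{M}(q,\zeta,\zeta) - \sum_{j=1}^4 \mathcal{M}(q,\zeta^{(j)},\zeta^{(j)})$, with $\zeta = \sum_j \zeta^{(j)}$. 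As in Lemma \ref{lm_recoverN1}, $\mathcal{F}$ has constant rank $1$ (its Jacobian is $2(G\zeta,G\zeta,G\zeta,G\zeta)$ with $G$ nondegenerate), so $\mathcal{X}$ is a submanifold of dimension $11$. A direct computation gives
\[
D\mathcal{G} = 2\bigl(M(\zeta-\zeta^{(1)}),\,M(\zeta-\zeta^{(2)}),\,M(\zeta-\zeta^{(3)}),\,M(\zeta-\zeta^{(4)})\bigr),
\]
where $M$ is the matrix of $\mathcal{M}(q,\cdot,\cdot)$. Since $\mathcal{G}$ vanishes on $\mathcal{X}_0 \subset \mathcal{F}^{-1}(0)$, the map $f=(\mathcal{F},\mathcal{G})$ has $11$-dimensional level set $f^{-1}(0,0) \supset \mathcal{X}_0$, forcing $f$ to have constant rank $1$ on $\mathcal{X}_0$. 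Therefore $D\mathcal{G}$ and $D\mathcal{F}$ are linearly dependent on $\mathcal{X}_0$, yielding a scalar $\lambda$ with $M(\zeta-\zeta^{(j)}) = \lambda G\zeta$ for each $j=1,2,3,4$. Summing over $j$ gives $3M\zeta = 4\lambda G\zeta$, so $M\zeta$ is proportional to $G\zeta$, and hence
\[
\mathcal{M}(q,\zeta,\zeta) = \zeta^\top M\zeta = \tfrac{4\lambda}{3}\, g^*(\zeta,\zeta) = 0
\]
for every $\zeta = \sum_j \zeta^{(j)}$ arising from $(\zeta^{(1)},\ldots,\zeta^{(4)}) \in \mathcal{X}_0$.

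The last step is to promote this from an open piece of the light cone to the entire light cone. As the $\zeta^{(j)}$ vary over $\mathcal{X}_0$, the sums $\zeta = \sum \zeta^{(j)}$ sweep out a nonempty open subset $\mathcal{L}_0$ of the light cone $\{g^*(\zeta,\zeta)=0\} \subset T_q^*M$. Since $\mathcal{M}(q,\zeta,\zeta)$ is a polynomial in $\zeta$ vanishing on the open subset $\mathcal{L}_0$ of the irreducible quadric $\{g^*(\zeta,\zeta)=0\}$ (irreducible in Lorentzian signature in dimension $4$), it vanishes on the entire light cone. By \cite[Lemma 2.1]{Wang2019}, the symmetric quadratic form $\mathcal{M}(q,\cdot,\cdot)$ is then a scalar multiple of $g^*$, hence a null form, contradicting the hypothesis. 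The main obstacle I expect is verifying the constant-rank conclusion cleanly on the open set $\mathcal{X}_0$ (since $\lambda$ a priori may depend on the full quadruplet, one must check that the compatibility forced by the four equations $M(\zeta-\zeta^{(j)})=\lambda G\zeta$ still produces the single proportionality $M\zeta \propto G\zeta$); the algebraic extension from $\mathcal{L}_0$ to the whole light cone is routine given irreducibility.
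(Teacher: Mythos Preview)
Your proposal is correct and takes essentially the same approach as the paper. In fact, the paper does not give an independent proof of this proposition at all: it simply invokes \cite[Proposition 4.3]{Wang2019}, and the Jacobian/constant-rank argument you wrote out is precisely the method of that reference, carried over to four waves in the same way the paper's own Lemma~\ref{lm_recoverN1} handles three. The only point you flag as an obstacle (passing from linear dependence of $D\mathcal{F},D\mathcal{G}$ on $T\widetilde{\mathcal{X}}$ to the ambient relation $M(\zeta-\zeta^{(j)})=\lambda G\zeta$) is handled in the paper's Lemma~\ref{lm_recoverN1} at the same level of detail you give, so your write-up is already at parity with the paper's standard.
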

In particular, this proposition implies that if there are two symmetric quadratic form $\mathcal{M}^{(1)},\mathcal{M}^{(2)}$ such that the corresponding $\mathcal{P}^{(1)}$, $\mathcal{P}^{(2)}$ defined by (\ref{P}) are equal, then we must have 
$\mathcal{M}^{(1)} = \mathcal{M}^{(2)}$ up to null forms at $q$. 

Moreover, we formulate the following proposition that describes in which case a covector $(y, \eta)$ with $y \in \partial M$ is in the wave front set of the measurement $\epslamvec$,
if $(y, \eta)$ is not in the set $\unionGamma$. 
This is an analog to \cite[Thoerem 3.3]{Kurylev2018} (see also \cite[Proposition 4.2]{Wang2019}).
\begin{pp}\label{thm33}
    Assume $w(x,u,\xi)$ satisfies (\ref{assum_n0n1}). 
    Let $y \in \pmt$ such that $y \in \nxxi \cap \ntxxi$ defined in (\ref{def_nxxi}, \ref{def_ntxxi}). 
    Let $\eta \in T_y^* \tM$  such that $(y, \eta) \notin \unionGamma$, see (\ref{def_Gamma}). 
    Consider the projection $(\yr, \etar)$ of $(y, \eta)$ onto $T^* (\partial M)$. Then
    \begin{itemize}
        \item[(1)] If $(\capgamma) \cap \nxxi = \emptyset$, then $(\yr, \etar) \notin \wfset(\epslamvec)$. 
        \item[(2)] If $(\capgamma) \cap \nxxi = \{ q \}$ but $y \notin J^+(q)$, 
        then $(\yr, \etar) \notin \wfset(\epslamvec)$. 
        \item[(3)] Suppose $(\capgamma) \cap \nxxi = \{ q \}$ such that $q = \gamma_{x_j, \xi_j}(t_j)$ with $0 < t_j < \rho(x_j, \xi_j)$,
        for $j = 1, 2, 3, 4$. If $  \dot{\gamma}_{x_j, \xi_j}(t_j)$ are linearly independent 
        and $(y, \eta) \in \mathcal{C}^{\text{reg}}_M(q)$,
        then $(\yr, \etar) \in \wfset(\epslamvec)$. 
    \end{itemize}
\end{pp}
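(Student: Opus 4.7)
The proof is essentially a direct corollary of Proposition~\ref{pp_ufour} together with Proposition~\ref{pp34}. Parts (1) and (2) are soft: one shows that under the stated hypotheses the covector $(y,\eta)$ lies outside the set $\pi_b(\unionGamma \cup \Lambda_q \cup \Lambda_q^b)$ that contains $\wfset_b(\ufour)$, so after taking the boundary trace the projected covector $(\yr,\etar)$ cannot be in $\wfset(\epslamvec)$. Part (3) is the extraction of information and requires unpacking the principal symbol formula~(\ref{eq_pslambdafour}) and showing that, under assumption (\ref{assum_n0n1}), none of the factors vanish.

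\textbf{Parts (1) and (2).} For (1), if $\capgamma \cap \nxxi = \emptyset$ then no four-fold intersection point $q$ exists inside $\nxxi$, so $\Lambda_q$ and $\Lambda_q^b$ do not contribute in the decomposition of $\ufour$ carried out in Section~\ref{Sec_fourwaves}. Proposition~\ref{pp_ufour}(a) therefore gives $\wfset_b(\ufour) \subset \pi_b(\unionGamma)$, and since $(y,\eta)\notin \unionGamma$ by hypothesis and $\mathcal{R}$ is an FIO that does not enlarge wave front sets transversely, $(\yr,\etar)\notin \wfset(\epslamvec)$. For (2), Proposition~\ref{pp_ufour}(a) still gives $\wfset_b(\ufour) \subset \pi_b(\unionGamma \cup \Lambda_q \cup \Lambda_q^b)$. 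Every covector in $\Lambda_q$ or $\Lambda_q^b$ projects onto a point of $J^+(q)$ (the former has $q$ as its base point; the latter is the forward broken bicharacteristic flow-out from $\Lambda_q$ and so stays in $J^+(q)$ by causality). Since $y \notin J^+(q)$ and $(y,\eta)\notin\unionGamma$, the covector $(y,\eta)$ avoids the whole set, and taking traces yields (2).

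\textbf{Part (3).} Here $K_1,\ldots,K_4$ intersect $4$-transversally at $q$ since the $\dot\gamma_{x_j,\xi_j}(t_j)$ are linearly independent, and $(y,\eta)$ lies on a forward null bicharacteristic from $(q,\zeta)\in \Lambda_q$ that has not yet reached a cut point. This places $(y,\eta)$ before the first reflection of $\Lambda_q^b$ on the boundary (up to small $s_0$), so Proposition~\ref{pp_ufour}(b)--(c) applies and the principal symbol of $\epslamvec$ at $(\yr,\etar)$ is given by~(\ref{eq_pslambdafour}). I would verify non-vanishing factor by factor: $\sigma^{(p)}(\mathcal{R})$ is nonzero because $(\yr,\etar)$ is the timelike projection of the null covector $(y,\eta)$ as in the trace computation recalled before Section~\ref{subsec_U41d}; $\sigma^{(p)}(\tQ_g)(y,\eta,q,\zeta)$ is nonzero because $(y,\eta)$ and $(q,\zeta)$ lie on the same forward bicharacteristic before conjugate points, using $t_j<\rho(x_j,\xi_j)$; and each $\sigma^{(p)}(v_j)(q,\zeta^{(j)})$ is nonzero by the construction of the distorted plane waves. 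Finally, under assumption~(\ref{assum_n0n1}), the null-form terms cancel, i.e.\ $\mathcal{C}\equiv 0$ on $\mathcal{X}$, so the symbol reduces to the $\mathcal{P}$ contribution; Proposition~\ref{pp34} says $\mathcal{P}$ cannot vanish identically on any open subset of $\mathcal{X}$.

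\textbf{Expected main obstacle.} The delicate point is that Proposition~\ref{pp34} asserts only that $\mathcal{P}$ is nontrivial on open subsets of $\mathcal{X}$, not that it is pointwise nonzero at the particular quadruple $(\zeta^{(1)},\zeta^{(2)},\zeta^{(3)},\zeta^{(4)})$ we are handed. The fix is the perturbation argument alluded to after Proposition~\ref{pp34}: we replace $(\vec{x},\vec{\xi})$ by a nearby quadruple $(\vec{x}',\vec{\xi}')$ satisfying (\ref{assp_causalindep}) and the same $4$-transversality, for which the corresponding decomposition $\zeta=\sum\zeta^{\prime(j)}$ lies in a neighborhood where $\mathcal{P}\neq 0$; the corresponding principal symbol at the perturbed observation covector $(\yr',\etar')$ is then nonzero, so $(\yr',\etar')\in\wfset(\epslamvec)$. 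Because we can choose these perturbations so that $(\yr',\etar')$ runs through a neighborhood of $(\yr,\etar)$ as the parameters vary, and because the construction can be redone consistently for each perturbation, closedness of the wave front set gives $(\yr,\etar)\in\wfset(\epslamvec)$. Making this perturbation compatible with the geometric data $(q,y,\eta)$ while preserving the causal-independence and cut-locus conditions is the technical heart of the argument, and mirrors the analogous step in \cite{Kurylev2018,Wang2019}.
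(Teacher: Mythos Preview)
Your treatment of Part (2) matches the paper. There are genuine gaps in Parts (1) and (3).

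\textbf{Part (1).} You invoke Proposition~\ref{pp_ufour}(a) in a regime where its hypothesis fails: that proposition, and the entire analysis in Sections~\ref{subsec_U41d}--\ref{subsec_U45d}, is carried out under the standing assumption that $K_1,\dots,K_4$ intersect $4$-transversally at a point $q$. When $\capgamma\cap\nxxi=\emptyset$ there is no such $q$, so you cannot simply drop the $\Lambda_q\cup\Lambda_q^b$ term from the conclusion of a proposition whose proof you can no longer cite. The paper handles this by a separate case analysis: take $s_0$ small so that $\cap_j K_j=\emptyset$, then argue that any remaining triple interaction (transversal or not) or pairwise interaction produces only singularities in $\pi_b(\Lambda^{(1)}\cup\Lambda^{(2)}\cup\Lambda^{(3)}\cup\Lambda^{(3),b})\subset\pi_b(\unionGamma)$, by rerunning the analysis of \S\ref{subsec_U41d}--\ref{subsec_U45d} with $v_l$ smooth near $K_{ijk}$ and invoking Lemma~\ref{lm_w1} and \cite[Proposition~4.2]{Wang2019}. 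This case analysis is the actual content of (1); your argument skips it.

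\textbf{Part (3).} You correctly isolate the obstacle: Proposition~\ref{pp34} gives only generic nonvanishing of $\mathcal{P}$, not pointwise nonvanishing at the specific $(\zeta^{(1)},\dots,\zeta^{(4)})$ determined by the given $(\vec x,\vec\xi)$ and $(y,\eta)$. But your proposed fix is incorrect. Perturbing $(\vec x,\vec\xi)$ to $(\vec x',\vec\xi')$ changes the Neumann data $f$ to some $f'$, and hence replaces $\epslamvec$ by a \emph{different} distribution $\partial_{\vec\epsilon}\Lambda(f')|_{\vec\epsilon=0}$. Knowing that nearby covectors lie in the wave front sets of these \emph{other} distributions says nothing about $\wfset(\epslamvec)$ for the original $f$; the closedness-of-$\wfset$ argument you sketch does not apply across a family of distinct distributions. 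The paper's own proof of (3) is terse (``By Proposition~\ref{pp_ufour}, we have the desired result'') and does not perform any perturbation here. The perturbation you have in mind is exactly what is built into condition~(Da) and carried out in the proof of Lemma~\ref{lm36}(2): there one first chooses $(\vec x',\vec\xi')$ so that $\mathcal{P}\neq 0$ at the resulting covectors, and \emph{then} applies Proposition~\ref{thm33}(3) to the perturbed data, where Proposition~\ref{pp_ufour}(c) gives a nonzero principal symbol and hence membership in the wave front set. In short, the perturbation belongs to Lemma~\ref{lm36}, not to the proof of Proposition~\ref{thm33}(3) itself.
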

\begin{proof}
First from the analysis above, we can relate the singularities of ND map data with that of $\ufour$ by the trace operator  $\mathcal{R}$. 
Indeed, a covector $(y_|, \eta_|) \in T^*\partial M \setminus 0$ belongs to $\epslamvec$ if and only if $(y, \eta)$ is in $\wfset(\ufour)$ over $\tM$.     

For (1), with $s_0$ small enough, we can assume $\cap_{j=1}^4 K_j$ is empty. 
By the assumption (\ref{assp_causalindep}), in $\ntxxi$ the intersection of any $K_i$ and $K_j$ happens in $\intM$, if $s_0$ is sufficiently small. 
Moreover by \cite{Kurylev2018}, one has any $K_i$ and $K_j$ intersect at most once in $\nxxi$. 
Now we consider the rest of cases when $K_j$ may intersect for $j =1,2,3,4$. 
Suppose there are transversal interactions of three waves, for example, $K_{ijk}$ is not empty. 
We repeat the same analysis as in Section \ref{subsec_U41d} - \ref{subsec_U45d},  with additionally assuming $v_l$ to be smooth near $K_{ijk}$. 
It follows that $\ufour$ is reduced to triple interactions and has the boundary wave front set contained in $\unionGamma$. 
We note that $K_l$ may intersect with the flow out from $\Lambda_{ijk}$ under the broken bicharacteristics.
But the analysis in the proof of Lemma \ref{lm_w1} shows that their intersection in $\ntxxi$ should always happen in $\intM$. 
Thus we are in the same situation as in \cite[Thoerem 3.3]{Kurylev2018} and \cite[Proposition 4.2]{Wang2019}. 
If $K_i,K_j,K_k$ intersect at $K_{ijk}$ but not transversally, then by the same arguments as in \cite[Proposition 4.2]{Wang2019} we have $\wfset_b({\ufour}) \subset \pi_b(\Lambda^{(1)} \cup \Lambda^{(2)})$. 
If there are only the interactions of two waves, then $\wfset_b({\ufour})$ is contained in the same set.

For (2), from the analysis above one can see no matter whether $K_1, K_2, K_3,K_4$ intersect at $q$ transversally or not, the boundary wave front set of $\ufour$ is contained in  $\unionGamma \cup \Lambda_q \cup \Lambda_q^b$. Thus, if $y \notin J^+(q)$, then $(y, \eta)$ is not in the wave front set of $\ufour$ in $\tM$. 

For (3), with $\dot{\gamma}_j(t_j)$ linearly independent, we can assume $K_1, K_2, K_3,K_4$ intersect at $q$ transversally with sufficient small $s_0$. By Proposition \ref{pp_ufour}, we have the desired result. 
\end{proof}

\subsection{The proof of Theorem \ref{thm_recoverNM}}
By \cite[Lemma 3.5]{Kurylev2014}, for each $q \in \mathbb{W}$, there exist $(x_j, \xi_j) \in L^+V$, $j=1,2,3,4$, such that the four null geodesics corresponding to $(\vec{x}, \vec{\xi})$ intersect at $q$, with linearly independent tangent vectors and $q \in \nxxi$. 
The proof and construction in \cite[Section 3.2]{Hintz2020} indicate the same statement for three null geodesics. 
Then for each $q$,
first we use the interaction of three waves to recover the symmetric form $\mathcal{N}_1(x, \xi)$ at $q$. 
By Lemma \ref{lm_recoverN1}, one can determine $\mathcal{N}_1(q, \xi)$  up to null forms from the principal symbol of the ND map. 
Next we use the interaction of four waves to recover the symmetric form $\mathcal{M}(q, \xi)$.
From the analysis above, in this case the ND map determines the quantity $\mathcal{P} + \mathcal{C}$ 
at any $(\zeta^{(1)}, \zeta^{(2)}, \zeta^{(3)}, \zeta^{(4)})$ in the set $\mathcal{X}$.
By plugging $\mathcal{N}_1(q, \xi)$ module a null form into $\mathcal{C}$, we can assume we are recovering with the assumption (\ref{assum_n0n1}) that $\mathcal{N}_0, \mathcal{N}_1$ are null. 
By Proposition \ref{pp34}, the quantity $\mathcal{P}$ vanishes on $\mathcal{X}$ if and only if $\mathcal{M}$ is null. 
Thus, one can determine $\mathcal{M}$ up to null forms.

\section{Determination of the earliest light observation set.}\label{obs}
Throughout this section, we make the following assumptions. 
We will show these assumptions can be fulfilled by the extension later on in Section \ref{sec_recovery}, if the ND map is known in a proper open set on the boundary, especially in  $(0, T) \times \partial N$. 


\begin{assumption} \label{assump}
	We assume there exists a time-oriented globally hyperbolic 
    Lorentzian manifold $(\tM, \tg)$ such that $M$ embeds isometrically as a submanifold with boundary and 
\begin{itemize}
    \item[(A1)] we are given a timelike path $\hat{\mu}: \ [0, 1] \mapsto \tM \setminus M$ and its open neighborhood $V \subset  \tM \setminus M$, such that we can extend $\hat{\mu}$ a little bit to have $V$ contained in  the open set $I(\hat{\mu}(-\epsilon), \hat{\mu}(1+\epsilon))$, where $\epsilon$ is a small positive number;
    \item[(A2)] we know the extended metric $\tg$ in the set $I(\hat{\mu}(-\epsilon), \hat{\mu}(1+ \epsilon)) \cap \tM \setminus M $ and the ND map in a neighborhood $V_{\partial}$ of the set $I(\hat{\mu}(-\epsilon), \hat{\mu}(1+ \epsilon)) \cap \partial M$,  see Figure \ref{fig_assump}.
\end{itemize}
\end{assumption}
\begin{figure}[h]
    \centering
    \includegraphics[height=0.25\textwidth]{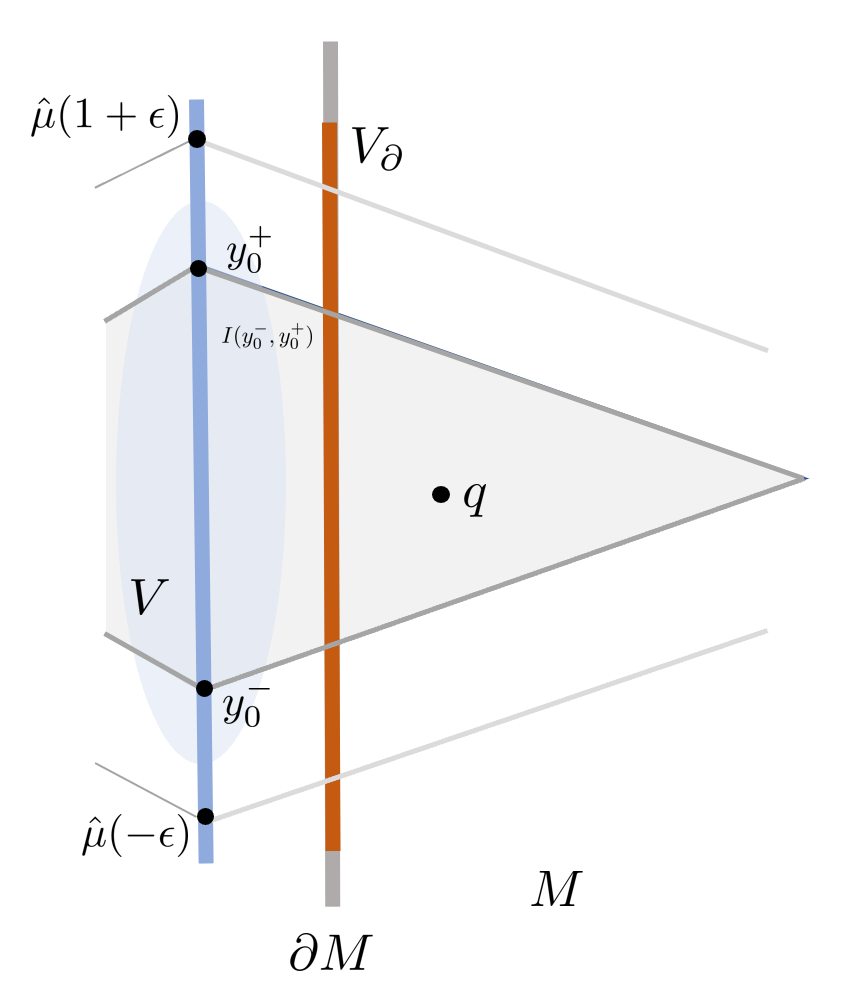}
    \caption{Illustration of $\hat{\mu}$, $V$, $V_\partial$, and $I(\hat{\mu}(-\epsilon), \hat{\mu}(1+ \epsilon))$.}
    \label{fig_assump}
\end{figure}
With these assumptions, 
we are in the similar situation as \cite{Kurylev2018}. 
We would like to recover the conformal class of the metric in $M^\mathrm{o}$ from the earliest observation sets (for the definition see (\ref{def_eob})) in $V$. 
As in \cite{Kurylev2018}, we consider a suitable smaller observation subset of $V$ by 
choosing a family of future pointing smooth time-like paths $\mu_a:[0,1] \rightarrow V$, indexed by $a \in \bar{\mathcal{A}}$, 
where  $\bar{\mathcal{A}}$ is the completion of a connected precompact metric space $\mathcal{A}$, see \cite[Section 2]{Kurylev2018}.
We define this smaller observation set $U \subset V$ as a union of time-like paths
\[
U = \bigcup_{a \in \mathbb{A}}\mu_a((0, 1)). 
\]
Let $\yu^{\pm} = \hat{\mu}(s_\pm)$ with $0 < s_- < s_+ <1$. 
We will show in the following that one can recover the earliest light observation set in $U$ for any points in $I(\yu^-, \yu^+)$  from four pieces of information according to \cite[Theorem 4.5]{Kurylev2018}. 
Instead of directly observing in $U$, we use the information on the boundary given by the ND map to recover this family of earliest light observation sets. 
Then by \cite[Theorem 1.2]{Kurylev2018} the conformal class of the metric in $I(\yu^-, \yu^+)$ can be determined uniquely up to diffeomorphisms. 

\subsection{Detection of singularities on the boundary.}

We follow the same idea as in \cite{Kurylev2018} to detect singularities that are produced by the interaction of four waves in the set $U$. 
The difference is that in their setting the observation happens in $U$ by using the source-to-solution map, while in our case we use the information on the boundary given by the ND map to detect these singularities in $U$. 

For this purpose, we define the following set which connects a covector in $T^*U$ with covectors on the boundary under the null bicharacteristics. 
\begin{df}
    For $(\yu, \etau) \in L^{*,+}U$, let $\mathcal{A}(\yu, \etau)$ be the subset of $L_{V_\partial}^*M $ consisting of the future-pointed lightlike covectors $(y, \eta)$ such that 
    \begin{itemize}
        \item[(1)] $(y, \eta) \in \LcMpo$,
        \item[(2)] there exists $t_0 > 0$ satisfying $\yu = \gamma_{y, \eta^\#}(t_0), \etau = (\dot{\gamma}_{y, \eta^\#}(t_0))^b$.
    \end{itemize}
\end{df}
We note that $\mathcal{A}(\yu, \etau)$ might be empty and might contain more than one element $(y, \eta)$.
\begin{lm}\label{lm_setA}
The set $\mathcal{A}(\yu, \etau)$ is determined by the restricted ND map $\Lambda|_{V_\partial}$ and the extended metric $\tg$ 
in $I(\hat{\mu}(-\epsilon), \hat{\mu}(1+ \epsilon)) \cap \tM \setminus M$. 
\end{lm}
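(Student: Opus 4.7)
The definition of $\mathcal{A}(\yu, \etau)$ is purely geometric, involving only the null geodesic through $\yu$ with tangent $\etau^\#$ and its first transverse intersection with $\partial M$. The strategy is to show that this object can be computed from the data in Assumption \ref{assump} by tracing the backward null geodesic from $\yu$ within the known region $(\tM \setminus M) \cap I(\hat\mu(-\epsilon), \hat\mu(1+\epsilon))$.

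First I would observe that $\mathcal{A}(\yu, \etau)$ is either empty or a singleton: any $(y, \eta)$ in it must satisfy $\gamma_{y, \eta^\#}(t_0) = \yu$ together with $(\dot{\gamma}_{y, \eta^\#}(t_0))^b = \etau$, and since a null geodesic in $\tM$ is uniquely determined by a point and a tangent covector, this requirement fixes both the geodesic and its parametrisation. Equivalently, the geodesic in question is obtained by reversing $\tilde\gamma(s) = \gamma_{\yu, -\etau^\#}(s)$ and reading off its first hit with $\partial M$.

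Next I would trace $\tilde\gamma$ using the known metric. By the null-convexity of $\partial M$, any first contact of $\tilde\gamma$ with $\partial M$ is transverse (cf.\ the use of \cite[Proposition 2.4]{Hintz2017} earlier in the paper), so the first hitting time $t_0 > 0$, if it exists, is well-defined. I then set $y = \tilde\gamma(t_0)$ and $\eta^\# = -\dot{\tilde\gamma}(t_0)$, and test the defining conditions of $\mathcal{A}(\yu, \etau)$: that $\eta^\#$ is outward-pointing (automatic from the geometry of the first hit from outside), that $(y, \eta)$ is future-pointed (forced by $\etau$ being future-pointed, since null geodesics preserve time-orientation), and that $y \in V_\partial$. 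Each of these tests is decidable from $\tg$ on $(\tM \setminus M) \cap I(\hat\mu(-\epsilon), \hat\mu(1+\epsilon))$ together with the known embedding of $\partial M$; if all three conditions hold, $\mathcal{A}(\yu, \etau) = \{(y, \eta)\}$, otherwise $\mathcal{A}(\yu, \etau) = \emptyset$.

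The main point requiring care is verifying that $\tilde\gamma$ remains in the region where $\tg$ is known until it first meets $\partial M$, so that the tracing step is legitimate. This uses the containments $\yu \in V \subset I(\hat\mu(-\epsilon), \hat\mu(1+\epsilon))$ together with a suitable choice of $V_\partial$ around the portion of the boundary meeting the diamond; the relevant geodesic segment then lies entirely in $(\tM \setminus M) \cap I(\hat\mu(-\epsilon), \hat\mu(1+\epsilon))$, and in particular no segment of $\tilde\gamma$ enters $\intM$ before reaching $\partial M$. If $\tilde\gamma$ exits this region before touching $\partial M$, then no admissible $(y, \eta)$ exists and $\mathcal{A}(\yu, \etau) = \emptyset$. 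The ND map $\Lambda|_{V_\partial}$ is listed in the hypotheses because it is the overarching datum of the inverse problem, but $\mathcal{A}(\yu, \etau)$ itself is extracted from $\tg$ alone through the procedure above.
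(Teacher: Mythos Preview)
Your proposal contains a genuine error at the very first step: the claim that $\mathcal{A}(\yu,\etau)$ is either empty or a singleton is false, and the paper says so explicitly just before the lemma (``might contain more than one element''). While it is true that the null geodesic through $(\yu,\etau)$ is unique, that geodesic can cross $\partial M$ several times. Every time it exits $M$ (in the forward time direction) at a point of $V_\partial$, the corresponding boundary covector is outward-pointing and satisfies both conditions in the definition of $\mathcal{A}(\yu,\etau)$. So you must collect \emph{all} such crossings, not just the first one you see tracing backward from $\yu$.

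This leads to the second gap: once the backward geodesic $\tilde\gamma$ hits $\partial M$ transversally, it continues into $\intM$, where the metric is \emph{not} known. You cannot simply keep integrating the geodesic equation there, and your last paragraph (``no segment of $\tilde\gamma$ enters $\intM$ before reaching $\partial M$'') sidesteps rather than addresses this. The paper's proof handles exactly this by using the lens relation $\mathcal{L}$, which (Lemma~\ref{lm_lens}) is determined by $\Lambda|_{V_\partial}$: after finding the first boundary hit $(y,\eta)\in\LcMpo$, one inverts $\mathcal{L}$ to recover the entry covector $(z,\zeta)\in\LcMng$ on the far side of $M$, then resumes tracing in $\tM\setminus M$ using the known $\tg$, and iterates. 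There is also a separate argument (Claim~\ref{cl_tancase}) for the tangential case. So the ND map is not merely ``the overarching datum''---it is genuinely used, via the lens relation, to bridge each passage of the geodesic through the unknown interior.
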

\begin{proof}
We consider the null geodesic passing $\yu$ in the direction of $(\etau)^\#$ in $\tM$.
For convenience, we denote this null geodesic $\gamma_{\yu, (\etau)^\#}(t)$ by $\gamma(t)$ in the following. 
We would like to find all possible intersections of $\gamma(t)$ with the set $V_\partial \subset \partial M$ for $t<0$
by the steps below. 

Step 1, knowing the metric $\tg$ in $I(\hat{\mu}(-\epsilon), \hat{\mu}(1+ \epsilon)) \cap \tM \setminus M $, we can determine the latest time $t_y = \sup\{t<0, \ \gamma \in V_{\partial} \}$ when this null geodesic enters $M$ from $V_\partial$. 
Let $(y, \eta^\#) = (\gamma(t_y), \dot{\gamma}(t_y))$ if such $t_y$ exists. 
Otherwise we do not need to add new element to $\mathcal{A}(\yu, \etau)$. 

Step 2, there are two cases: $\gamma(t)$ hits $\partial M$ transversally or tangentially at $(y, \eta^\#)$. 
In the first case,  it follows that $(y, \eta) \in \LcMpo$, for the definition see (\ref{def_Lbundle}) . 
Then for sufficiently small $\ep>0$, one has $\gamma(t_y - \ep) \in \intM$. 
Let $t_z = \sup \{ t < t_y, \ \gamma(t) \in V_\partial \}$ and  $(z, \zeta^\#) = (\gamma(t_z), \dot{\gamma}(t_z))$ if such $t_z$ exists. 
Otherwise we do not need to add new covectors to $\mathcal{A}(\yu, \etau)$. 
Since $\dot{\gamma}(t_z +\ep) \in \intM$ for $\ep>0$ small enough, by \cite[Propositon 2.4]{Hintz2017} we must have $(z, \zeta)$ in $\LcMng$. 
By Lemma \ref{lm_lens}, the lens relation $(y, \eta) = \mathcal{L}(z, \zeta)$ is locally determined by $\Lambda|_{V_\partial}$ and is invertible. 
Thus, knowing $(y, \zeta) \in \LcMpo$, we can recover $(z, \zeta)$ from inverting the lens relation.
Next, we consider the null geodesic $\gamma_{z, \zeta^\#}(t) = \gamma(t+ t_z)$ with $t <0$ to find the intersections before $(z, \zeta)$. 
By \cite[Propositon 2.3]{Hintz2017}, there exists $\ep>0$ such that $\gamma_{z, \zeta}(t)$ is contained in $\tM \setminus M$ for $-\ep <t <0$. Then we proceed to Step 1 to continue the procedure. 
This finishes the construction for the first case. 
In the second case, suppose $(y,\eta) \in T^* \partial M$. 
By Claim \ref{cl_tancase}, there exists $\ep >0$ such that 
$\gamma(t) \in \tM \setminus M$ for $t_y - \ep< t < t_y$. 
Thus the null geodesic segment $\gamma((t_y - \ep,t_y))$ is determined by $\tg$ in ${I(\hat{\mu}(-\epsilon), \hat{\mu}(1+ \epsilon)) \cap \tM \setminus M}$. 
This time by considering $\gamma(t)$ for $t < t_y - \epsilon$,  one can proceed to Step 1 to continue the procedure.
\end{proof}
\begin{claim}\label{cl_tancase}
Let $(y, \eta) \in T^*\partial M$ be a lightlike covector. 
With $\partial M$ strictly null-convex, there exists $\ep >0$ such that 
$\gamma_{y, \eta^\#}(t) \in \tM \setminus M$, for $t \in (-\ep,0)$. 
\end{claim}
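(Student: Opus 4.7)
The plan is to reduce the claim to a one-variable convexity argument using a boundary defining function. Choose a smooth $\rho \in C^\infty(\tM)$ with $\rho > 0$ on $M^o$, $\rho = 0$ on $\partial M$, $\rho < 0$ on $\tM \setminus M$, and $d\rho \ne 0$ on $\partial M$. Since $\partial M$ is timelike, $\nabla \rho$ is spacelike along $\partial M$, the outward unit normal is $\nu = -\nabla\rho / |\nabla\rho|_g$, and it suffices to show that $f(t) := \rho(\gamma_{y,\eta^\#}(t))$ has a strict local maximum at $t = 0$ with value $0$.

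Set $v := \eta^\#$, which by hypothesis is a nonzero tangent vector to $\partial M$ at $y$ (nonzero because $\eta$ is lightlike), and which satisfies $g(v,v)=0$. The first derivative vanishes: $f'(0) = d\rho(v) = g(\nabla\rho, v) = 0$, since $v \in T_y\partial M$ and $\nabla\rho \perp T_y\partial M$. For the second derivative, using that $\gamma$ is a geodesic so $\nabla_{\dot\gamma}\dot\gamma = 0$,
\begin{equation*}
f''(0) = (\nabla_v d\rho)(v) = \mathrm{Hess}\,\rho(v,v).
\end{equation*}

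The key identity is $\mathrm{Hess}\,\rho(v,v) = -|\nabla\rho|_g\,\kappa(v,v)$ at $y$. This follows from differentiating $\nu = -\nabla\rho/|\nabla\rho|_g$:
\begin{equation*}
\nabla_v \nu = -\frac{\nabla_v \nabla\rho}{|\nabla\rho|_g} + v\!\left(|\nabla\rho|_g^{-1}\right)\nabla\rho,
\end{equation*}
pairing with $v$, and using $g(\nabla\rho, v) = 0$ to kill the second term. By strict null-convexity (\ref{def_nconvex}) applied to the nonzero null vector $v \in T_y\partial M$, $\kappa(v,v) > 0$, hence $f''(0) < 0$. Thus $t = 0$ is a strict local maximum of $f$, so there exists $\ep > 0$ with $f(t) < 0$ on $(-\ep, 0)$, meaning $\gamma_{y,\eta^\#}(t) \in \{\rho < 0\} = \tM \setminus M$.

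I do not anticipate any genuine obstacle; the only mild care needed is the sign bookkeeping between the defining function $\rho$ and the outward normal $\nu$, and verifying that the identity $\mathrm{Hess}\,\rho(v,v) = -|\nabla\rho|_g\,\kappa(v,v)$ uses only $d\rho(v)=0$, which holds since $v$ is tangent to $\partial M$. This is essentially the ``tangential'' counterpart of \cite[Proposition~2.4]{Hintz2017} already invoked in Lemma~\ref{lm_gh}, applied here with the strict inequality.
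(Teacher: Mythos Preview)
Your proof is correct and uses the same essential ingredients as the paper: a boundary defining function $\rho$ (the paper calls it $h$), the identity $\mathrm{Hess}\,\rho(v,v) = -|\nabla\rho|_g\,\kappa(v,v)$ for $v$ tangent to $\partial M$, and strict null-convexity to force $f''(0)<0$. Where you conclude directly via the second-derivative test that $f(t)<0$ for small $t\ne 0$, the paper takes a more circuitous route: it first cites \cite[Proposition~2.3]{Hintz2017} to confine the geodesic to $\tM\setminus\intM$ for $|t|<t_0$, then argues by contradiction, assuming a sequence $t_j\to 0$ with $\gamma(t_j)\in\partial M$, forcing $f'(t_j)=0$ (else the geodesic would enter $\intM$), and applying Rolle's theorem to produce $s_j\to 0$ with $f''(s_j)=0$, contradicting $f''(0)<0$. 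Your argument is cleaner and avoids the auxiliary citation; the paper's detour buys nothing extra here.
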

\begin{proof}
First by \cite[Propositon 2.3]{Hintz2017}, we can find $t_0 > 0$ such that $\gamma_{y, \eta^\#}(t) \in \tM \setminus \intM$, for $t \in (-t_0,t_0)$.
Then let $h$ be a smooth boundary defining function in $\tM$ in which case $\partial M$ is given by $h^{-1}(0)$ and we have $h> 0$ for $\intM$ in a small open neighborhood $\mathcal{O}$ of $y$. 
With $\mathcal{O}$ small enough, there exists a positive constant $C$ and the strictly null-convex condition implies that $\kappa(v,v) > C$ for any lightlike tangent vector $v$  on $\partial M$. 
Moreover, we can assume $\gamma_{y, \eta^\#}(t) \in \mathcal{O}$ for $t \in (-t_0,t_0)$. 
Note the outward pointing unit normal to $\partial M$ is $\nu = - \frac{\nabla h}{|\nabla h|}$ and we have
\[
\kappa(v,v) 
=- g(\nabla_v \nu, v)
= - |\nabla h|^{-1}(\nabla^2 h)(v,v).
\]
It follows that $(\nabla^2 h)(v,v) <  - C |\nabla h|$ for in $\mathcal{O}$. 
Now we prove the statement by contradiction. 
Assume there is no such $\ep$. 
Then we can find a sequence $\{ t_j \}$ in $(-t_0, 0)$ satisfying $\lim_{j \rightarrow \infty} t_j = 0$ and  
$\gamma_{y, \eta^\#}(t_j) \in \partial M$. 
We follow the arguments in \cite{Hintz2017} and define the auxiliary function 
\[
f(t) = h \circ \gamma_{y, \eta^\#}(t). %
\]
Note that $f(t) = 0$ exactly when $\gamma_{y, \eta^\#}(t) \in \partial M$ and $f(t) = f'(t) =\diff h(\dot{\gamma}_{y, \eta^\#}(t)) = 0$ exactly when $(\gamma_{y, \eta^\#}(t), \dot{\gamma}_{y, \eta^\#}(t)) \in T\partial M$. 
For each $j$, if $f'(t_j) = \diff h(\dot{\gamma}_{y, \eta^\#}(t_j)) \neq 0$, then there exists sufficiently small $\delta >0$ such that $f(t_j + \delta)$ or $f(t_j - \delta)$ is positive, with $f(t_j) = 0$. 
This implies $\gamma_{y, \eta^\#}(t_j + \delta)$ or $\gamma_{y, \eta^\#}(t_j - \delta)$ is contained in $\intM$, 
which contradicts with the choice of $t_0$. 
It follows that we have $f'(t_j) = 0 $ for any $t_j$ in the sequence. 
Then, for any $j$, there exists $s_j$ with $t_j>s_j > t_{j+1}$ such that $f''(s_j) = 0$. 
Thus, we get a sequence $s_j \rightarrow 0$ with $f''(s_j) = 0$.
On the other hand, we compute 
\[
f''(t) = \nabla^2 h(\dot{\gamma}_{y, \eta^\#}(t), \dot{\gamma}_{y, \eta^\#}(t)).  
\]
Especially, we have $f''(0) <-C |\nabla h(y)|$,
with $|\nabla h(y)|$ nonzero. 
Then the smoothness of $f$ indicates that for sufficiently small $t$ we have $f''(t) < -C |\nabla h(y)|/2$. 
This contradicts with the existence of the sequence $s_j \rightarrow 0$ with $f''(s_j) = 0$.
\end{proof}

\begin{df}
    We say that the \textbf{interaction condition (Ia)} is satisfied for $(\yu, \etau) \in T^*U$ with light-like vectors $(\vec{x}, \vec{\xi})$ and parameters $q, w, t$,  
    if there exists $q \in \nxxi$ in $M$ and $w \in L_q^+ M$, $t \geq 0$ such that $\yu = \gamma_{q, w}(t)$ and $\etau = (\dot{\gamma}_{q, w}(t))^b$, see Figure \ref{fig_setting}.
\end{df}

\begin{df}\label{defDa}
    We say a covector $(\yu, \etau) \in T^*U$ satisfies the \textbf{condition (Da)}  with light-like vectors $(\vec{x}, \vec{\xi})$ and $\hat{s} >0$, 
    if for any $s_0, s_1 \in (0, \hat{s})$ and $j = 1, 2, 3, 4$, 
    there exist 
    \begin{enumerate}
        \item[(a)] $(x_j', \xi_j')$ in the $s_1$-neighborhood of $(x_j, \xi_j )$ with $u_j \in I^{\mu}(\Lambda(x_j',\xi_j',s_0))$ satisfying (\ref{assp_causalindep}-ab), such that $\square_g u_j \in C^\infty$,
        \item[(b)] $(y, \eta)\in \mathcal{A}(\yu, \etau) $,
    \end{enumerate}
    such that
    $(\yr, \etar) \in \wfset(\epslamvec)$, where we define $f = \sum_{j=1}^{4} \epsilon_j f_j$ with $f_j = \partial_\nu u_j|_{\partial M}$ and 
    $(\yr, \etar)$ is the orthogonal projection of $(y, \eta)$ onto $T^* (\partial M)$.
\end{df}
The condition (Da) is the singularity detection condition, by which one can tell if the singularity produced by the interaction of four waves can be detected at the given covector in $T^*U$. 
It is closely related to the interaction condition (Ia) by the following lemma. 
In particular, from the ND map one can determine if the condition (Da) is valid. 

\begin{lm}[{Analog to  \cite[Lemma 3.6]{Kurylev2018}}]\label{lm36}
    Assume that $\yu \in U $ and $\yu \in \nxxi \cap \ntxxi$ with $(\yu, \etau) \notin \unionGamma$. Then
    \begin{itemize}
        \item[(1)] If the geodesics corresponding to $(\vec{x}, \vec{\xi})$ either do not intersect in $\nxxi$ or the geodesics intersect at a point $q \in \nxxi$ with $\yu \notin J^+(q)$,
        then $(\yu, \etau)$ does not satisfy condition (Da).
        \item[(2)] Assume $(\yu, \etau)$ satisfies condition (Ia) with $(\vec{x}, \vec{\xi})$ and the parameters $q, w, t$. Additionally, assume $0 < t < \rho(q, w)$. Then $(\yu, \etau)$ satisfies condition (Da) with $(\vec{x}, \vec{\xi})$ and $\hat{s} >0$ small enough. 
        \item[(3)] 
        Using the ND map $\Lambda$ restricted to the open set $V_\partial$ on the boundary, one can determine whether the condition (Da) is valid for  $(\yu, \etau)$ with $(\vec{x}, \vec{\xi})$ and $\hat{s} >0$.
    \end{itemize}
\end{lm}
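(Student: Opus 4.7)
The plan is to build on Proposition~\ref{thm33} and adapt the perturbation strategy of \cite[Lemma 3.6]{Kurylev2018} to the boundary setting, where the data is the ND map on $V_\partial$.

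For part (1), the geometric hypothesis --- either $\capgamma \cap \nxxi = \emptyset$, or the intersection is a single point $q$ with $\yu \notin J^+(q)$ --- is an open condition, stable under sufficiently small perturbations of $(\vec{x}, \vec{\xi})$ in $L^+V$ and under shrinking $s_0$. Choosing $\hat{s}$ small enough, every admissible perturbation $(x_j', \xi_j')$ still falls into the setting of Proposition~\ref{thm33}(1)--(2), and for every $(y,\eta) \in \mathcal{A}(\yu, \etau)$ one obtains $(\yr, \etar) \notin \WF(\epslamvec)$. Hence (Da) fails.

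For part (2), Proposition~\ref{thm33}(3) yields $(\yr, \etar) \in \WF(\epslamvec)$ once (a) the four perturbed geodesics meet at a common $q'$ with linearly independent tangent vectors, (b) $(\yu, \etau) \in \mathcal{C}^{\mathrm{reg}}_M(q')$, and (c) the principal symbol in (\ref{eq_pslambdafour}) is nonzero at $(\yr, \etar)$. Starting from (Ia) with $0 < t < \rho(q, w)$, a suitable $(y, \eta) \in \mathcal{A}(\yu, \etau)$ is obtained by extending the forward null bicharacteristic through $\yu$ backward until it exits $M$ transversally into $V_\partial$. A standard transversality argument lets one perturb $(\vec{x}, \vec{\xi})$ within $L^+V$ (while preserving (\ref{assp_causalindep}-ab)) so that the four perturbed geodesics meet transversally at a nearby point $q'$; lower semicontinuity of $\rho$ preserves $0 < t' < \rho(q', w')$. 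The remaining, and principal, obstacle is (c): under assumption (\ref{assum_n0n1}) the term $\mathcal{C}$ in (\ref{eq_pslambdafour}) vanishes on $\mathcal{X}$ and only $\mathcal{P}$ survives. By Proposition~\ref{pp34}, $\mathcal{P}$ does not vanish on any open subset of $\mathcal{X}$, so an additional arbitrarily small perturbation of the four tangent covectors produces $\mathcal{P} \neq 0$ at the relevant quadruplet. This establishes (Da) for all sufficiently small $\hat{s}$.

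For part (3), Lemma~\ref{lm_setA} reconstructs $\mathcal{A}(\yu, \etau)$ from $\Lambda|_{V_\partial}$ together with the known extended metric on $I(\hat{\mu}(-\epsilon), \hat{\mu}(1+\epsilon)) \cap \tM \setminus M$. The distorted plane waves $u_j$ and their Neumann traces $f_j = \partial_\nu u_j|_{\partial M}$ are constructed entirely from $(x_j', \xi_j')$ and the data outside $M$, so they are available to the observer. Thus, for each candidate perturbation and each $(y, \eta) \in \mathcal{A}(\yu, \etau)$, one feeds $f = \sum_k \epsilon_k f_k$ through $\Lambda|_{V_\partial}$, differentiates in $\vec{\epsilon}$ at zero, and reads off $\WF(\epslamvec)$ near $(\yr, \etar)$; this decides whether (Da) holds. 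The only substantive difficulty across the three parts is in (2), in combining a transversal perturbation with the non-vanishing-symbol step secured by Proposition~\ref{pp34}.
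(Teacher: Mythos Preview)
Your approach matches the paper's, but two steps need tightening.

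In part (1), Proposition~\ref{thm33} is stated for a boundary covector $(y,\eta)$ with $y \in \nxxi \cap \ntxxi$ and $(y,\eta) \notin \unionGamma$; you only have these hypotheses for $(\yu,\etau)$. The paper explicitly transfers them to every $(y,\eta) \in \mathcal{A}(\yu,\etau)$ via Lemma~\ref{connecty}, which you should cite before invoking Proposition~\ref{thm33}.

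In part (2), ``a standard transversality argument'' does not do what you claim. If the four tangent vectors $\dot\gamma_{x_j,\xi_j}(t_j)$ at $q$ happen to be linearly dependent, a generic perturbation of the source data $(\vec{x},\vec{\xi})$ will \emph{destroy} the common intersection point rather than make it transversal: four one-dimensional curves in a four-manifold generically do not meet at all. The paper instead perturbs the tangent covectors \emph{at $q$} --- first to achieve linear independence, then via Proposition~\ref{pp34} to make $\mathcal{P}\neq 0$ --- and obtains $(x_j',\xi_j')$ by reversing the geodesic flow from $(q,\tfrac{1}{\alpha_j}(\zeta^j)^{\#})$; this keeps $q$ itself as the common intersection point. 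Your two-step scheme with an intermediate point $q'$ is unjustified as written. The paper also verifies that the constructed boundary point satisfies $y \in V_\partial$, using $y \in J(q,\yu)\cap\partial M \subset V_\partial$ from Assumption~\ref{assump}; you should include this check.
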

\begin{proof}
    We follow the same arguments as in \cite{Kurylev2018} to prove the first two statements.
    For (1),  
    to check if $(\yu, \etau)$ satisfies the condition (Da), assume $s_0, s_1, (x_j', \xi_j')$, and  $f$ as in Definition \ref{defDa}. 
    If $\mathcal{A}(\yu, \etau) = \emptyset$, then $(\yu, \etau)$ does not satisfy condition (Da). Otherwise, suppose there exists $(y, \eta) \in \mathcal{A}(\yu, \etau) $.
    When $s_0, s_1$ are small enough, one has $\yu \in {\mathcal{N}(\vec{x}', \vec{\xi}')}$. Moreover, if the geodesics corresponding to $((x'_j, \xi_j'))_{j=1}^4$ intersect at a point $q' \in \mathcal{N}(\vec{x}', \vec{\xi}')$, then $\yu \notin J^+(q')$ . 
    For any $(y, \eta) \in \mathcal{A}(\yu, \etau)$, it follows that  $y \notin J^+(q')$ is true. 
    By Lemma \ref{connecty} one also has $y \in \nxxi \cap \ntxxi$ and $(y, \eta) \notin \unionGamma$.
    Then Proposition \ref{thm33} implies $(\yr, \etar) \notin \wfset(\epslamvec)$ for each $(y, \eta) \in \mathcal{A}(\yu, \etau)$ . 
    This proves that $(\yu, \etau)$ does not satisfy condition (Da).
    
    For (2), the assumptions that $(\yu, \etau) \in T^*U$ satisfies condition (Ia) with $(\vec{x}, \vec{\xi}), q, w$ and $0 < t_0 < \rho(q, w)$ indicate that $(\yu, \etau) \in  \mathcal{C}^{\text{reg}}_U(q)$. 
    Let $t_y = \inf \{ s > 0; \ \gamma_{q, w}(s) \in \tM \setminus M\}$ be the first time leaving $M$
    and we define $(y , \eta)= (\gamma_{q, w}(t_y), (\dot{\gamma}_{q, w}(t_y))^b)$. 
    Note that $0< t_y < t$ and $(y, \eta) \in  \mathcal{C}^{\text{reg}}_M(q)$. 
    By \cite[Proposition 2.4]{Hintz2017}, with $\gamma_{q,w}((0, t_y)) \subset \intM$ one has $(y, \eta^\#) \in T^+_{\partial M} M$ and therefore $(y, \eta) \in \mathcal{A}(\yu, \etau)$. 
    Indeed, with Assumption \ref{assump}, one has $y \in J(q, \yu) \cap \partial M$, which implies $y \in V_\partial$. 
    Then in the following we show that for small $s_0, s_1$, 
    there exists $(x_j', \xi_j')$ in the $s_1$-neighborhood of $(x_j, \xi_j )$ and $f_j, f$ defined correspondingly
    such that $(\yr, \etar) \in \wfset(\epslamvec)$.
    Indeed, with $b^j = (\dot{\gamma}_{x_j, \xi_j}(t_j))^b$, the covectors $(b^j)_{j=1}^4$ might be linear dependent, but choosing arbitrarily small perturbation one can find light-like covectors $(\hat{b}^j)_{j=1}^4$ which are linear independent. 
    Then let $\zeta = w^b$ and we decompose it as $\zeta = \sum_{j=1}^4 \hat{\zeta}^j$, where $\hat{\zeta}^j = \alpha_j \hat{b}^j$ with some scalars $\alpha_j$.
    It follows that $(\hat{\zeta}^j)_{j=1}^4 \in \mathcal{X}$ and by Proposition \ref{pp34} there exists arbitrarily small perturbation $({\zeta}^j)_{j=1}^4 \in \mathcal{X}$ such that $\mathcal{P}(\zeta^1,\zeta^2,\zeta^3,\zeta^4 ) \neq 0$.
    We define $(x_j', \xi_j')$ by reversing the flow from $(q, \frac{1}{\alpha_j}(\zeta^j)^\#)$, i.e. define
    $x_j' = \gamma_{q,\frac{1}{\alpha_j}(\zeta^j)^\#}(-t_j)$ and $\xi_j' = \dot{\gamma}_{q,\frac{1}{\alpha_j}(\zeta^j)^\#}(-t_j)$. 
    By \cite[Lemma 3.1]{Kurylev2018} one can construct $u_j \in I^{n-\frac{1}{2}}(\Lambda(x_j',\xi_j',s_0))$ such that $\square_g u_j \in C^\infty$ and define $f_j, f$ as in Definition \ref{defDa} . 
    Then by Proposition \ref{thm33} we have $(\yr, \etar) \in \wfset(\epslamvec)$.
    
    For (3), given $(\yu, \etau) \in T^*U$ one can find every $(y, \eta) \in \mathcal{A}(y_0, \eta^0)$ in $V_\partial$ from the ND maps restricted there according to Lemma \ref{lm_setA}, if such $(y, \eta)$ exists.  
    Then by Proposition \ref{thm33} the principal symbol of the ND map determines if the projection $(\yr, \etar) \in \wfset(\epslamvec)$ and therefore determines if $(\yu, \etau)$ satisfies condition (Da). 
\end{proof}
\subsection{Surfaces of earliest singularities.}
In the following, by \cite{Kurylev2018} we define a microlocal version of the \textbf{earliest stable singularities} in $\ntxxi$  as 
\begin{align*}
\mathcal{S}^{\text{m}}(\vec{x}, \vec{\xi}) = &
\{
(\yu, \etau) \in T^*U; \ \text{there is } \hat{s}>0 \text{ such that the condtion (Da)}\\
&\text{ is valid for } (\yu, \etau), (\vec{x}, \vec{\xi}), \text{ and } \hat{s},
\text{ and } \yu \in \ntxxi
\}.
\end{align*}
We will show in Lemma \ref{lm_R} that one can determine if a given covector in $T^*U$ belongs to $\ntxxi$ with Assumption \ref{assump}. 
Then we define the projection $\mathcal{S}(\vec{x}, \vec{\xi})= \pi(\mathcal{S}^{\text{m}}(\vec{x}, \vec{\xi}))$ onto $U$ as the \textbf{earliest stable singularities}. 
Let $\mathcal{S}_H(\vec{x}, \vec{\xi})$ be the set of such points $y_0 \in U$ that for any small neighborhood $W$ of $y_0$ the Hausdorff dimension of the intersection $W \cap \mathcal{S}(\vec{x}, \vec{\xi})$ is at least three. 
Note as a subset of $\ntxxi$, the set $\mathcal{S}_H(\vec{x}, \vec{\xi})$ we define here might not be closed in $U$. 
Next, we denote by $\text{cl}(S)$ the closure of a set $S$ and define
\[
\mathcal{S}_e(\vec{x}, \vec{\xi}) = \bigcup_{a \in \mathcal{A}} \textbf{e}_a(\text{cl}(\mathcal{S}_H(\vec{x}, \vec{\xi})))
\]
as the \textbf{surface of the earliest stable singularities} produced by the interaction of four waves. 
Here the set
\[
\textbf{e}_a(S) = \{\mu_a(\inf\{s \in [-1,1]:\ \mu_a(s) \in S \})\}
\]
is defined as the earliest points of set $S$ on the path $\mu_a = \mu_a([-1,1])$, if $\mu_a \cap S \neq \emptyset$; otherwise $\textbf{e}_a(S)$ is defined as the empty set. 
The following lemma shows the relation of $\mathcal{S}_e(\vec{x}, \vec{\xi})$ and 
$\mathcal{E}_U(q)$ if $q$ is the intersection point away from the casual future of the first cut points. In particular, the last statement implies that $\mathcal{S}_e (\vec{x}, \vec{\xi})$ satisfies the properties (P1) and (P2) in \cite[Theorem 4.5]{Kurylev2018}. 

\begin{lm}[{Analog to \cite[Lemma 4.4]{Kurylev2018}}]\label{lm44}
    Assume $\yu \in U$.
    We have following statements. 
    \begin{itemize}
        \item[(1)] Assume $(\yu, \etau) \in T^*U$ with $\yu \in \nxxi \cap \ntxxi$ satisfies condition (Ia) with $(\vec{x}, \vec{\xi})$ and the parameters $q, w, t$ such that $0 \leq t \leq \rho(q, w)$. They $\yu \in \text{cl} (\mathcal{S}_H (\vec{x}, \vec{\xi}))$. 
        \item[(2)] Assume the geodesics corresponding to $(\vec{x}, \vec{\xi})$ either do not intersect in $\nxxi$ or intersect at $q$ and $\yu \notin J^+(q)$. Then $\yu \notin \mathcal{S}_H (\vec{x}, \vec{\xi})$. 
        \item[(3)] The sets $\mathcal{S}_e (\vec{x}, \vec{\xi})$ satisfies
        \begin{align*}
        & \mathcal{S}_e (\vec{x}, \vec{\xi}) = \EUq \subset \nxxi, \text{ if } (\capgamma) \cap \nxxi = \{ q \},\\
        & \mathcal{S}_e (\vec{x}, \vec{\xi}) \subset M_0 \setminus \nxxi, \text{ if }(\capgamma) \cap \nxxi = \emptyset.
        \end{align*}
    \end{itemize}
\end{lm}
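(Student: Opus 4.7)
The proof splits naturally according to the three claims and parallels the analogous statement \cite[Lemma 4.4]{Kurylev2018}, with the observation mechanism (source-to-solution map) replaced by the ND map through Lemma \ref{lm36} and Proposition \ref{thm33}. The hard arguments are all already encapsulated in those two results; this lemma is a geometric repackaging of them.

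For (1), the plan is to first invoke Lemma \ref{lm36}(2) in the strict range $0 < t < \rho(q, w)$: since $\yu \in \nxxi \cap \ntxxi$ satisfies condition (Ia) with parameters $(q, w, t)$, we obtain that $(\yu, \etau)$ satisfies condition (Da), hence $\yu \in \mathcal{S}(\vec{x}, \vec{\xi})$. To upgrade this to $\yu \in \mathrm{cl}(\mathcal{S}_H(\vec{x}, \vec{\xi}))$, we perturb the quadruplet $(\vec{x}, \vec{\xi})$ inside the $s_1$-neighborhood allowed by Definition \ref{defDa}. Because condition (Da) is an open condition in $(\vec{x}', \vec{\xi}')$ (the non-vanishing of the principal symbol in \cite[Proposition 4.2]{Wang2019} and Proposition \ref{pp34}, combined with Proposition \ref{thm33}(3), is stable under perturbation), the perturbed intersection points $q'$ sweep out a neighborhood of $q$, and propagating forward by $\gamma_{q', w'}(t)$ fills a subset of $U$ of Hausdorff dimension at least three. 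The boundary cases $t = 0$ and $t = \rho(q, w)$ are recovered as limits of the strict-range points, which is why we only obtain the closure $\mathrm{cl}(\mathcal{S}_H)$ rather than $\mathcal{S}_H$ itself.

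For (2), Lemma \ref{lm36}(1) applies verbatim: in both disjunctive subcases the hypotheses of that lemma hold for \emph{every} choice of perturbation parameters, hence $(\yu, \etau)$ fails condition (Da). Therefore $\yu \notin \mathcal{S}(\vec{x}, \vec{\xi}) \supset \mathcal{S}_H(\vec{x}, \vec{\xi})$, proving the claim.

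For (3), the plan is to combine (1) and (2) with the earliest-point operator $\mathbf{e}_a$. In the single-intersection case $\capgamma \cap \nxxi = \{q\}$, for every $y \in \EUq$ there exist $w \in L_q^+ M$ and $0 \leq t \leq \rho(q, w)$ with $y = \gamma_{q, w}(t)$; taking $\etau = (\dot{\gamma}_{q, w}(t))^b$ and applying (1) yields $y \in \mathrm{cl}(\mathcal{S}_H(\vec{x}, \vec{\xi}))$. Since $\EUq$ is exactly the set of earliest points of $\mathcal{L}^+(q) \cap U$ along future-timelike curves, taking $\mathbf{e}_a$ along each $\mu_a$ recovers precisely $\EUq$, so $\EUq \subset \mathcal{S}_e(\vec{x}, \vec{\xi})$. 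The reverse inclusion uses (2) together with Lemma \ref{lm_ntxxi}: any earliest candidate outside $\EUq$ must either sit outside $J^+(q)$ or outside $\ntxxi$, and in the latter case Lemma \ref{lm_ntxxi}(1) shows it cannot be earliest in $\EUq$. In the empty-intersection case, (2) immediately forces $\mathcal{S}_H(\vec{x}, \vec{\xi}) \cap \nxxi = \emptyset$, so by construction $\mathcal{S}_e(\vec{x}, \vec{\xi}) \subset M_0 \setminus \nxxi$. The main obstacle is the Hausdorff-dimension count in (1): one must verify that the twelve parameters of $(\vec{x}', \vec{\xi}')$ (modulo reparameterizations along the null geodesics and the 4-transversality constraints) map submersively onto a full three-dimensional neighborhood of $\yu$; this is where the transversality assumption (\ref{assp_causalindep}) plays an essential role.
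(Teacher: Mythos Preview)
Your argument for (1) misidentifies where the three-dimensionality comes from, and this is a genuine gap. The set $\mathcal{S}(\vec{x},\vec{\xi})$ is defined for a \emph{fixed} quadruplet $(\vec{x},\vec{\xi})$; the perturbation $(\vec{x}',\vec{\xi}')$ inside condition (Da) is an existential quantifier used to certify that a \emph{given} covector $(\yu,\etau)$ detects a singularity, not a mechanism for producing new points of $\mathcal{S}(\vec{x},\vec{\xi})$. Varying $(\vec{x}',\vec{\xi}')$ to move $q'$ around, and then flowing forward, does not populate $\mathcal{S}(\vec{x},\vec{\xi})$ for the original $(\vec{x},\vec{\xi})$. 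The paper obtains the Hausdorff-dimension bound from a completely different source: for fixed $q$, the regular earliest light observation set $\EUqr$ is already a smooth $3$-dimensional submanifold of $U$ (see \cite[Section~2]{Kurylev2018}). Every point of $\EUqr$ near $\yu$ is of the form $\gamma_{q,w'}(t')$ with $0<t'<\rho(q,w')$, so Lemma~\ref{lm36}(2) applies to it (provided it lies outside $\pi(\unionGamma)$, which has Hausdorff dimension at most $2$). Hence $\EUqr \cap W$ minus a $2$-dimensional set sits inside $\mathcal{S}(\vec{x},\vec{\xi})$, which forces $\yu\in\mathcal{S}_H$. The submersion you propose to check is neither needed nor available.

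Your argument for (2) also has a gap. Lemma~\ref{lm36}(1) carries the hypotheses $\yu\in\nxxi\cap\ntxxi$ and $(\yu,\etau)\notin\unionGamma$, so it does \emph{not} apply verbatim to every covector. More importantly, showing $\yu\notin\mathcal{S}$ for a single point is not what is required: $\mathcal{S}_H$ is defined via the Hausdorff dimension of $W\cap\mathcal{S}$ for neighborhoods $W$. The paper argues that on the open set $\mathcal{N}_1$ (either $\nxxi$ or $\nxxi\setminus J^+(q)$), condition (Da) fails off $\pi(\unionGamma)$ by Lemma~\ref{lm36}(1), so $\mathcal{S}\cap\mathcal{N}_1\subset\pi(\unionGamma)$ has Hausdorff dimension at most $2$, whence $\mathcal{N}_1\cap\mathcal{S}_H=\emptyset$. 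You need this neighborhood-level argument. Your outline for (3) is roughly right, but the reverse inclusion $\mathcal{S}_e\subset\EUq$ should go through the cleaner route: (2) gives $\mathcal{S}_H\subset J^+(q)$, hence $\text{cl}(\mathcal{S}_H)\subset J^+(q)$, and then \cite[Lemma~2.4]{Kurylev2018} yields $\mathbf{e}_a(\text{cl}(\mathcal{S}_H))\subset\EUq$.
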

\begin{proof}
    We follow similar arguments to \cite{Kurylev2018} to prove these statements. 
    For (1), by \cite{Kurylev2018} one can see that the Hausdorff dimension of the set $\pi(\unionGamma) \cap \nxxi$ is at most 2. 
    If we assume $(\yu, \etau)$ satisfies conditions in (1) and additionally $(\yu, \etau) \notin \unionGamma $ with $ t < \rho(q, w)$, 
    then by Lemma \ref{lm36} statement (2) we have$(\yu, \etau) $ satisfies the condition (Da).
    Moreover, the covector $(\yu, \etau) \in \mathcal{C}_U^\text{reg}(q)$, which implies $\yu \in \EUqr$ and has a neighborhood $W$ such that $\EUqr \cap W$ is a 3-dimensional submanifold. It follows $\yu \in \mathcal{S}_H({\vec{x}, \vec{\xi}})$, see \cite[Section 2]{Kurylev2018}. 
    The general case satisfying the assumptions in (1) without the extra conditions can be obtained by the same density arguments as in \cite{Kurylev2018}, since $\text{cl} (\mathcal{S}_H (\vec{x}, \vec{\xi}))$ is closed. 
    
    For (2),
    in the case when the geodesics corresponding to $(\vec{x}, \vec{\xi})$ do intersect in $\nxxi$, we denote $\mathcal{N}_1 = \nxxi \setminus J^+(q)$. 
    In the case when the geodesics corresponding to $(\vec{x}, \vec{\xi})$ do not intersect in $\nxxi$, we denote $\mathcal{N}_1 = \nxxi$.
    By Lemma \ref{lm36} statement (1) the condition (Da) is not satisfied for any points in $\mathcal{N}_1 \setminus \pi(\unionGamma)$. Recall that the Hausdorff dimension of the set $\pi(\unionGamma) \cap \nxxi$ is at most 2, and therefore $\mathcal{N}_1 \cap  \mathcal{S}_H({\vec{x}, \vec{\xi}}) = \emptyset$.
    
    For (3), suppose $(\capgamma) \cap \nxxi = \{ q \}$. 
    First, by Lemma \ref{lm_ntxxi} one has $\EUq \setminus \uniongamma \subset \ntxxi$. 
    Second, we can show that $\EUq \setminus \uniongamma $ is contained in $\nxxi$, by the exactly same arguement as that in \cite[Lemma 4.4]{Kurylev2018}. 
    Thus, by (1) one obtains 
    $\EUq \setminus \uniongamma \subset \text{cl}(\mathcal{S}_H (\vec{x}, \vec{\xi}))$. 
    The density argument implies that
    $\EUq \subset  \text{cl}(\mathcal{S}_H (\vec{x}, \vec{\xi}))$. 
    One the other hand, by (2) one has $\mathcal{S}_H (\vec{x}, \vec{\xi}) \subset J^+(q)$ and so does its closure, which implies $\mathcal{S}_e (\vec{x}, \vec{\xi}) \subset \EUq$ by \cite[Lemma 2.4]{Kurylev2018}. 
    This proves that the two sets are equal. 
    In the case that $(\capgamma) \cap \nxxi = \emptyset$, by (2) the set $\mathcal{S}_H (\vec{x}, \vec{\xi})$ is empty in $\nxxi$. 
    It follows that $\mathcal{S}_e (\vec{x}, \vec{\xi}) \subset M \setminus \nxxi$ as desired. 
\end{proof}
To apply \cite[Theorem 4.5]{Kurylev2018}, we need the following lemmas. 
\begin{lm}\label{lm_R}
    The statements (A1-A2) in Assumption \ref{assump} determine 
    if a point $\yu \in U$ belongs to the set $\ntxxi$ or not. 
\end{lm}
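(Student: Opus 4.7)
The plan is to reduce the problem to two sub-questions: first identify the four endpoints $p_j^b := \gamma_{x_j,\xi_j}(t_j^b) \in \partial M$, and then decide whether $y_0 \in J^+(p_j^b)$ in $(\tM,\tg)$ for some $j \in \{1,2,3,4\}$. Recall $\ntxxi$ is precisely the complement of $\bigcup_j J^+(p_j^b)$, so once both sub-questions are resolved from the data $(\tg|_{\tM\setminus M},\Lambda|_{V_\partial})$ we are done.

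For the first step, I integrate $\gamma_{x_j,\xi_j}$ forward from $(x_j,\xi_j) \in L^+V$ using the known metric $\tg$ on $I(\hat{\mu}(-\epsilon),\hat{\mu}(1+\epsilon)) \cap (\tM\setminus M)$ until it first meets $\partial M$. If the contact is tangent, Claim \ref{cl_tancase} lets me continue the geodesic in $\tM\setminus M$ for a short while before any actual entry into $M$. When the geodesic eventually enters $M$ transversally at some $(z_j,\zeta_j)$, Lemma \ref{lm_lens} tells me that the lens relation $\mathcal{L}$ on $V_\partial$ is determined by $\Lambda|_{V_\partial}$, and $(p_j^b,\eta_j^b) = \mathcal{L}(z_j,\zeta_j)$ records the first exit data, locating $p_j^b$. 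This is essentially the procedure already used in the proof of Lemma \ref{lm_setA}.

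For the second step, global hyperbolicity of $(\tM,\tg)$ ensures $J^+(p_j^b)$ is closed with $\partial J^+(p_j^b)$ an achronal Lipschitz hypersurface ruled by future-directed null geodesics issued from $p_j^b$. I trace each such generator by alternating two well-defined procedures: integration in $\tM\setminus M$ via $\tg$, and propagation across $M$ via the lens relation $\mathcal{L}$; by Lemma \ref{lm_gh}, null-convexity of $\partial M$ keeps the corresponding broken bicharacteristics inside $\gsetint \cup \mathcal{H}$, so each generator is uniquely continued. Only finitely many reflections can occur inside the compact diamond $I(\hat{\mu}(-\epsilon),\hat{\mu}(1+\epsilon))$, so $\partial J^+(p_j^b) \cap U$ is explicitly reconstructed from the data, and the known time orientation on $\tM\setminus M$ then places $y_0$ on one side or the other of this achronal boundary, deciding $y_0 \in J^+(p_j^b)$.

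The main subtlety is that an arbitrary causal curve from $p_j^b$ to $y_0$ need not be a broken null geodesic and may repeatedly enter and exit $M$, so the lens relation alone does not directly parametrize all such curves. This is overcome by invoking the identity $J^+(p_j^b) = \overline{I^+(p_j^b)}$ together with the achronal-boundary structure of $\partial J^+(p_j^b)$: membership in $J^+(p_j^b)$ reduces to the position of $y_0$ relative to the null-geodesic generators, all of which are traceable from $(\tg,\Lambda|_{V_\partial})$. Handling tangent grazing of $\partial M$ along the way is controlled by the strict null-convexity assumption and Claim \ref{cl_tancase}.
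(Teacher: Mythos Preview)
Your overall strategy is correct and matches the paper's: locate each $p_j^b=\gamma_{x_j,\xi_j}(t_j^b)$ via the lens relation, then decide whether $y_0\in J^+(p_j^b)$ by tracing $\mathcal{L}^+(p_j^b)$ using $\tg$ outside $M$ and the lens relation across $M$. There are, however, two points worth flagging.

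First, the paper begins with a simplifying dichotomy you omit: for each $j$, either $p_j^b\in V_\partial$ or $V\cap J^+(p_j^b)=\emptyset$. This follows because if there is a causal curve from $p_j^b$ to some $y_0\in V$, then $p_j^b\in J(x_j,y_0)\subset I(\hat\mu(-\epsilon),\hat\mu(1+\epsilon))$, whose intersection with $\partial M$ lies in $V_\partial$ by (A2). This observation cleanly handles the case your write-up glosses over, namely when $\mathcal{L}(z_j,\zeta_j)$ returns no exit point in $V_\partial$: one then concludes immediately that $y_0\notin J^+(p_j^b)$, rather than having to argue separately about what happens when the lens relation is ``empty.''

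Second, your invocation of Lemma \ref{lm_gh} and the phrase ``broken bicharacteristics'' is misplaced here. The generators of $\partial J^+(p_j^b)$ are null geodesics of $(\tM,\tg)$; they pass \emph{through} $M$, they do not reflect. Lemma \ref{lm_gh} concerns genuine broken bicharacteristics of the boundary value problem and is irrelevant to tracing geodesics of the extended metric. What you actually need is that a null geodesic in $\tM$ entering $M$ must exit transversally (so the lens relation applies), which is \cite[Proposition 2.4]{Hintz2017}, already used elsewhere in the paper. Once this is corrected, your step two is essentially what the paper compresses into the single sentence ``using the extended metric and $\Lambda|_{V_\partial}$, we can find $\mathcal{L}^+(p)\cap V$ and then $J^+(p)\cap U$.''
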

\begin{proof}
    By (\ref{def_ntxxi}), we need to check if $\yu$ is in the casual future of 
    $J^+(\gamma_{x_j, \xi_j}(t_j^b))$, where $t_j^b$ is defined in (\ref{def_bpep}), for $j = 1, 2, 3, 4$. 
    We claim that either $V \cap J^+(\gamma_{x_j, \xi_j}(t_j^b)) = \emptyset$ or $\gamma_{x_j, \xi_j}(t_j^b) \in V_\partial$ for each $j$. 
    Indeed, if this is not true, then we can find $j$ such that there is a causal path from $\gamma_{x_j, \xi_j}(t_j^b)$ to $\yu$ and $\gamma_{x_j, \xi_j}(t_j^b) \notin V_\partial$. 
    This implies $\gamma_{x_j, \xi_j}(t_j^b) \in J(x_j, \yu)$. 
    Note that $x_j, y_0 \in V \subset I(\hat{\mu}(-\ep), \hat{\mu}(1+\ep))$ and therefore 
    $\gamma_{x_j, \xi_j}(t_j^b) \in I(\hat{\mu}(-\ep), \hat{\mu}(1+\ep))) \cap \partial M \subset V_\partial$, by Assumption \ref{assump}. This proves the claim.
    
    Now we would like to determine if $\yu \in U$ is in $J^+(\gamma_{x_j, \xi_j}(t_j^b))$. 
    We consider $(z, \zeta) =(\gamma_{x_j, \xi_j}(t_j^0), (\gamma_{x_j, \xi_j}(t_j^0))^b) \in \LcMng$ and use the lens relation $ \mathcal{L}$ in $V_\partial$, which is determined by the restricted ND map $\Lambda|_{V_\partial}$ by Lemma \ref{lm_lens}. 
    If there is no lightlike covector $(p, \eta) \in \LcMpo$ satisfying $(p,\eta) = \mathcal{L}(z, \zeta)$, 
    then we conclude that $V \cap J^+(\gamma_{x_j, \xi_j}(t_j^b)) = \emptyset$ and therefore any $\yu \in V$ is not in $J^+(\gamma_{x_j, \xi_j}(t_j^b))$. 
    If there exists lightlike covector $(p,\eta) = \mathcal{L}(z, \zeta)$ in $V_\partial$, then we have the case $p = \gamma_{x_j, \xi_j}(t_j^b) \in V_\partial$. 
    Using the extended metric and $\Lambda|_{V_\partial}$, we can find the set $\mathcal{L}^+(p) \cap V$ and then $J^+(p) \cap U$. 
    This tells us if $\yu \in U$ is in the causal future of $\gamma_{x_j, \xi_j}(t_j^b)$ or not.   
\end{proof}
\begin{lm}[{Analog to \cite[Lemma 4.3]{Kurylev2018}}]\label{lm43}
    The statements (A1-A2) in Assumption \ref{assump} uniquely determine 
    \begin{itemize}
        \item[(1)] the set $\mathcal{S}_e(\vec{x},\vec{\xi})$ for all $(\vec{x},\vec{\xi}) \in (L^+U)^4$,  
        \item[(2)] the set  $G_{\hat{\mu}} = \{(x,\xi) \in L^+ U; \ x \in U, \ \gamma_{x, \xi}(\mathbf{R}_+) \cap \hat{\mu} \neq \emptyset\}$,
        \item[(3)] the causality relation $R^<_U = \{(p_1, p_2) \in U \times U: p_1 < p_2\}$.
    \end{itemize}
\end{lm}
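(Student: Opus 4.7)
The plan is to deduce each of the three claims by unfolding definitions and invoking the lemmas already assembled in this section together with Lemma \ref{lm36}.

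For (1), the definition of $\mathcal{S}^{\text{m}}(\vec{x},\vec{\xi})$ imposes two requirements on a covector $(\yu,\etau)\in T^*U$: the detection condition (Da) and the geometric condition $\yu\in\ntxxi$. Lemma \ref{lm36}(3) decides (Da) from $\Lambda|_{V_\partial}$, and Lemma \ref{lm_R} decides $\yu\in\ntxxi$ from (A1)--(A2). Hence $\mathcal{S}^{\text{m}}(\vec{x},\vec{\xi})$ is determined by the data; projecting to $\mathcal{S}(\vec{x},\vec{\xi})\subset U$, extracting the subset $\mathcal{S}_H$ of points every neighbourhood of which meets $\mathcal{S}(\vec{x},\vec{\xi})$ in Hausdorff dimension at least three, closing, and applying the earliest-point operator $\mathbf{e}_a$ along the prescribed timelike paths $\mu_a$ produce $\mathcal{S}_e(\vec{x},\vec{\xi})$, each step being intrinsic to $U$ and the fixed family $\{\mu_a\}$.

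For (2), I would trace the future-pointing null geodesic $\gamma_{x,\xi}$ piece by piece. On any segment lying in $\tM\setminus M$ the extended metric $\tg$ known from (A2) determines the geodesic explicitly. When the geodesic strikes $\partial M$ at an inward-pointing lightlike covector $(z,\zeta^b)\in\LcMng$, the lens map $\mathcal{L}$ recovered from $\Lambda|_{V_\partial}$ in Lemma \ref{lm_setA} supplies the exit covector $(y,\eta)=\mathcal{L}(z,\zeta^b)\in\LcMpo$, after which tracing resumes in $\tM\setminus M$; tangential contact points are handled using Claim \ref{cl_tancase}. Since a future causal curve starting in the open diamond $I(\hat\mu(-\epsilon),\hat\mu(1+\epsilon))$ cannot re-enter the diamond after exiting, every segment relevant for an eventual intersection with $\hat\mu$ stays inside the region where (A2) and $\Lambda|_{V_\partial}$ provide the necessary data. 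Checking whether the traced trajectory meets $\hat\mu$ therefore decides membership in $G_{\hat\mu}$.

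For (3), I propose to read off the causal relation from the null cone. For $p_1\in U$ I emit every future-pointing null geodesic from $p_1$ and propagate it with the procedure of (2); this yields $\mathcal{L}^+(p_1)\cap U$, whose relevant closure is $\partial J^+(p_1)\cap U$. The open region on the future side of this cone is $I^+(p_1)\cap U$, computable from $\tg|_U$ together with the time orientation, and $J^+(p_1)\cap U$ is its closure; declaring $p_1<p_2$ iff $p_2\in J^+(p_1)\setminus\{p_1\}$ recovers $R^<_U$. The step I expect to be the main obstacle is Part (2): threading the geodesic trace across repeated boundary reflections and glancing/tangential encounters with $\partial M$, and making sure the invertible lens relation from Lemma \ref{lm_setA} together with the tangential analysis of Claim \ref{cl_tancase} glue into a single globally well-defined procedure, is the real content; (1) and (3) then follow by combining this with Lemma \ref{lm36}(3), Lemma \ref{lm_R}, and the definitions.
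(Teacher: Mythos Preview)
Your proposal is correct and follows essentially the same route as the paper: for (1) you combine Lemma~\ref{lm36}(3) with Lemma~\ref{lm_R} and unwind the definition of $\mathcal{S}_e$, for (2) you trace the null geodesic in $\tM\setminus M$ via the known metric and across $M$ via the lens relation (the paper phrases this as ``the same arguments in Lemma~\ref{lm_R}''; the lens relation itself is stated separately as Lemma~\ref{lm_lens}), and for (3) you recover $\mathcal{L}^+(p)\cap U$ from the procedure in (2) and then pass to $J^+(p)\cap U$. The paper's proof is more terse but the underlying argument is the same, and your anticipated ``main obstacle'' in (2) is exactly what the machinery of Lemma~\ref{lm_setA}/Claim~\ref{cl_tancase} already handles.
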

\begin{proof}
    The first statement follows from Lemma \ref{lm36}, \ref{lm44}, \ref{lm_R}. 
    We first prove (2) and then the third statement can be proved by the same arguments as in \cite{Kurylev2018}.  
     
    To show (2), notice if $\gamma_{x, \xi}(\mathbf{R})_+) \cap M = \emptyset$, then the information whether $\gamma_{x, \xi}(\mathbf{R}_+) \cap \hat{\mu} \neq \emptyset$ is determined by the extended metric $\tilde{g}$ in 
    $I(\hat{\mu}(-\epsilon), \hat{\mu}(1+ \epsilon)) \cap \tM \setminus M $, with the assumption (A2).
    In the case that $\gamma_{x, \xi}(\mathbf{R}_+) \cap M \neq \emptyset$,
    one can follow the same arguments in Lemma \ref{lm_R}. 
    Note that if a null geodesic $\gamma_{x, \xi}(\mathbf{R}_+)$ enters $M$ and then returns to $U$, then it intersects $\partial M$ exactly in $V_\partial$. 
    Using the lens relation $\mathcal{L}$ on $V_\partial$ determined by the restricted ND map $\Lambda|_{V_\partial}$, one can tell if it intersect $\hat{\mu}$. 
    
    To show (3), we note that for any $p \in U$, we can find $\mathcal{L}^+(p) \cap U$ by (2). 
    This determines the set $J^+(p) \cap U$ and therefore determines the causality relation.     
\end{proof}
Combining Lemma \ref{lm44} and \ref{lm43} with \cite[Theorem 4.5, Theorem 1.2]{Kurylev2018}, we have the following proposition.
\begin{pp}\label{pp_recoverEOS}
    Assume everything in Assumption \ref{assump}. Let $\yu^{\pm} = \hat{\mu}(s_\pm)$ with $0 < s_- < s_+ <1$.
    Then these data determine uniquely the family $\{\mathcal{E}_U(q); \ q \in I(\yu^-, \yu^+)\}$
    of earliest light observation sets, and therefore they determine the conformal class of the metric $g$ at any point in $I(\yu^-, \yu^+)$ up to diffeomorphisms.
\end{pp}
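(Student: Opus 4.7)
The plan is to reduce the statement to \cite[Theorem 4.5]{Kurylev2018} and \cite[Theorem 1.2]{Kurylev2018} by verifying that the data listed in Assumption \ref{assump} suffice to reconstruct the hypotheses of those theorems. The key bridge is Lemma \ref{lm43}, which shows that from the extended metric on $I(\hat{\mu}(-\ep),\hat{\mu}(1+\ep)) \cap \tM\setminus M$ and the restricted ND map $\Lambda|_{V_\partial}$ we can determine (i) the set $\mathcal{S}_e(\vec{x},\vec{\xi})$ for every quadruple $(\vec{x},\vec{\xi}) \in (L^+U)^4$, (ii) the set $G_{\hat{\mu}}$ of future-pointing light-like covectors on $U$ whose forward geodesics meet $\hat{\mu}$, and (iii) the causality relation $R^<_U$ on $U \times U$. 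These are precisely the three pieces of a priori knowledge required by \cite[Theorem 4.5]{Kurylev2018}.

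First I would record the identification provided by Lemma \ref{lm44}(3): the recovered sets $\mathcal{S}_e(\vec{x},\vec{\xi})$ coincide with $\mathcal{E}_U(q)$ whenever the four geodesics $\gamma_{x_j,\xi_j}$ meet at a single point $q$ lying in $\nxxi$, and are contained in $M \setminus \nxxi$ when they do not intersect there. This supplies, for each $q$ in the observable region, a distinguished quadruple whose associated $\mathcal{S}_e$ set equals $\mathcal{E}_U(q)$, and it rules out the ``bad'' quadruples whose $\mathcal{S}_e$ sets could otherwise contaminate the reconstruction. Next I would invoke \cite[Lemma 3.5]{Kurylev2014} (already used at the start of the proof of Theorem \ref{thm_recoverNM}) to guarantee that, for any $q \in I(y^-_0,y^+_0)$, one can select four covectors $(x_j,\xi_j) \in L^+V$ whose null geodesics meet at $q$ with linearly independent tangents and satisfy $q \in \nxxi$; this ensures that every target point in $I(y^-_0,y^+_0)$ is actually observed by some admissible quadruple.

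With these ingredients in place, the hypotheses (P1) and (P2) of \cite[Theorem 4.5]{Kurylev2018} are satisfied by the collection $\{\mathcal{S}_e(\vec{x},\vec{\xi})\}$, so that theorem yields the family of earliest light observation sets $\{\mathcal{E}_U(q) : q \in I(y^-_0,y^+_0)\}$. Finally, feeding this family into \cite[Theorem 1.2]{Kurylev2018} produces, up to diffeomorphism fixing the observation set pointwise, the conformal class of the metric $g$ throughout $I(y^-_0,y^+_0)$, which is the conclusion of the proposition.

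The only nontrivial content beyond citation is thus verifying that the conditions on $\mathcal{S}_e$ and the auxiliary data $(G_{\hat{\mu}}, R^<_U)$ truly match those in \cite{Kurylev2018}; the main obstacle is the last remark in Lemma \ref{lm44}, namely ensuring that $\mathcal{S}_e(\vec{x},\vec{\xi}) \subset M \setminus \nxxi$ in the non-intersection case does not interfere with the Kurylev--Lassas--Uhlmann reconstruction scheme, but this is handled by their property (P2) together with the fact, already established in the proof of Lemma \ref{lm44}, that $\pi(\unionGamma) \cap \nxxi$ has Hausdorff dimension at most two and therefore cannot produce a spurious three-dimensional earliest singularity set.
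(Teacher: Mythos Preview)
Your proposal is correct and follows essentially the same approach as the paper: the paper states the proposition immediately after the sentence ``Combining Lemma \ref{lm44} and \ref{lm43} with \cite[Theorem 4.5, Theorem 1.2]{Kurylev2018}, we have the following proposition,'' which is exactly the chain of reductions you spell out. Your explicit invocation of \cite[Lemma 3.5]{Kurylev2014} to guarantee that every $q\in I(y_0^-,y_0^+)$ is hit by some admissible quadruple is a reasonable elaboration, though in the paper this is absorbed into the hypotheses (P1)--(P2) of \cite[Theorem 4.5]{Kurylev2018} via Lemma \ref{lm44}(3).
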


\subsection{Determining the metric on the boundary}\label{subsecbm}
In this part, we determine the jet of the metric on the subset ${V_\partial}$ of the boundary from the first order linearization $\epslamone$.
Indeed, by the asymptotic expansion in $(\ref{expand_u})$, we have $\epslamone = v_1|_{\partial M}$, where $v_1$ solves the boundary value problem of wave equation (\ref{eq_v1}) with Neumann boundary data $f_1$.  
This implies $\partial_{\epsilon_1} \Lambda(\ep_1 \bullet) |_{\epsilon_1=0}$ is the ND map for the wave equation.

Fix some $(y_|, \eta_|) \in T^*(\partial M)$ with $y_| \in V_\partial$, 
there exists a unique $(y, \eta) \in \LcMpo$
such that $(y_|, \eta_|)$ is the orthogonal projection of $(y, \eta)$ to $\partial M$. 
In the following, we use the semigeodesic coordinates $(\bx, x^3)$ near $y \in \partial M$, for more details see \cite[Lemma 2.3]{Stefanov2018}.
The dual variable is denoted by $(\bxi, \xi_3)$ and in this coordinate system the metric tensor $g$ takes the form
\[
g(x) = g_{\alpha \beta} (x)\diff x^\alpha \otimes \diff x^\beta + \diff x^3 \otimes \diff x^3, \quad \alpha, \beta \leq 2. 
\]
We focus on a small conic neighborhood $\Gamma_\partial$ of $(y_|, \eta_|)$.
Let $\chi(x_|, \xi_|)$ be a cutoff $\Psi$DO of order zero microlocally supported in  $\Gamma_\partial$, which equals to $1$ near $(y_|, \eta_|)$. 
Since there are no periodic null geodesics, the microlocally restricted map $\chi \partial_{\epsilon_1} \Lambda(\ep_1 \chi \bullet) |_{\epsilon_1=0}$ can be regarded as the local version of the ND map, which 
maps $f_1 \in \mathcal{E}'(\partial M)$ to 
$v|_{\partial M}$ restricted near $(y_|, \eta_|)$, 
with $\wfset(f_1) \subset \Gamma_\partial$ and 
\[
\square_g v \in C^\infty(M) \text{ near } y, \quad \partial_\nu v|_{\partial M} = f_1 \mod C^\infty(M).
\]

Now suppose there are two metric $g_j$ and the corresponding restricted ND map $\Lambda_{g_j}|_{V_\partial}$ for the semilinear problem (\ref{ihmNBC}), for $j = 1,2$. 
Here we use the notation $\Lambda_{g_j}$ to emphasize the ND map associated with the metric $g_j$.  
By \cite{Stefanov2018}, one can choose a diffeomorphism $\Psi$ that fixes $\partial M$ such that $g_1$ and $\Psi^*g_2$ have common semigeodesic normal coordinates near $\partial M$. 
Since $\Lambda_{g_2} = \Lambda_{\Psi^*g_2}$, we can replace $g_2$ by $\Psi^*g_2$ and assume $g_1, g_2$ 
share the same semigeodesic coordinates $(\bx, x^3)$ near $y \in \partial M$. 
\begin{lm}
Suppose  $\Lambda_{g_1}|_{V_\partial} = \Lambda_{g_2}|_{V_\partial}$.   
In the fixed common semigeodesic coordinates $(\bx, x^3)$, we have
one has 
\[
g_1|_{V_\partial} = g_2|_{V_\partial}, \quad \partial^j_{3}g_1|_{V_\partial} = \partial^j_{3} g_2|_{V_\partial}, \ j = 1,2, \ldots.
\]
\end{lm}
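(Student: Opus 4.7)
The plan is to reduce the statement to boundary determination for the ND map of the linear wave equation, and then to read off the jet of $g$ on $V_\partial$ from the classical symbol of this operator in the chosen semigeodesic coordinates. From the asymptotic expansion (\ref{expand_u}), the first linearization $\epslamone = v_1|_{\partial M}$ coincides with the ND map for $\square_g v_1 = 0$ with $\partial_\nu v_1 = f_1$. Hence equality of the full ND maps on $V_\partial$ immediately gives equality of the linear ND maps on $V_\partial$, and I only need to show that the latter determines $g$ and all its normal derivatives on $V_\partial$ in the fixed common coordinates $(\bx, x^3)$.

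Fix $(y_|, \eta_|) \in T^*V_\partial$ with its unique outward lightlike lift $(y, \eta) \in \LcMpo$, and a cutoff $\chi$ microsupported in a small conic neighborhood $\Gamma_\partial$ of $(y_|, \eta_|)$ inside the hyperbolic region. Because $\partial M$ is timelike and there are no periodic null geodesics near $y$, the microlocalized map $\chi\,\partial_{\ep_1}\Lambda(\ep_1 \chi\,\cdot)|_{\ep_1=0}$ is a classical pseudodifferential operator of order $-1$ on $\partial M$. I would construct a WKB parametrix
\[
v_1(x) = (2\pi)^{-3}\int e^{i\phi(x, \bxi)} a(x, \bxi)\,\hat{f}_1(\bxi)\,\diff\bxi,
\]
where $\phi$ solves the eikonal equation $g^{ij}\partial_i\phi\,\partial_j\phi = 0$ with $\phi|_{x^3 = 0} = \bx \cdot \bxi$, and the amplitude $a \sim \sum_{j \geq 0} a_{-j}$ is fixed by the Neumann normalization $\partial_{x^3} v_1|_{\partial M} = f_1 \mod C^\infty$ together with the usual transport equations along the characteristics of $g$. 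Restricting to $x^3 = 0$ then displays the full symbol of the ND map in $\Gamma_\partial$ as an explicit recursive functional of $g$ and its $x^3$-derivatives on $V_\partial$.

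Finally, I would invert this recursion. The principal symbol is a rational function of $\partial_{x^3}\phi|_{x^3=0}$, and the eikonal equation on $\{x^3 = 0\}$ expresses $\partial_{x^3}\phi|_{x^3=0}$ in terms of $g_{\alpha\beta}|_{V_\partial}$ evaluated on $\bxi$; matching principal symbols recovers $g_1|_{V_\partial} = g_2|_{V_\partial}$. Proceeding by induction on $j \geq 1$, the $(-1-j)$-th term of the ND-map symbol depends affinely on $\partial_3^j g|_{V_\partial}$, with a coefficient that becomes nonzero once the lower jets have been matched; this gives $\partial_3^j g_1|_{V_\partial} = \partial_3^j g_2|_{V_\partial}$ and closes the induction. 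This is precisely the ND-map analog of the boundary determination already carried out for the DN map of the linear wave equation in \cite[Lemma 2.3]{Stefanov2018}, and since the common semigeodesic coordinates were fixed beforehand no further diffeomorphism gauge enters.

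The main obstacle, and the step I expect to require the most care, is the nondegeneracy of the recursion: verifying that at each order $j$ the coefficient multiplying $\partial_3^j g|_{V_\partial}$ in the computed symbol is indeed invertible on $\Gamma_\partial$. This ultimately reduces to the fact that the outward lift of a timelike boundary covector satisfies $\partial_{x^3}\phi|_{x^3=0} \neq 0$, and to the nondegeneracy of the Neumann transport problem in the hyperbolic region; both are guaranteed by the timelike and strictly null-convex assumptions on $\partial M$, and by covering $V_\partial$ with such $\Gamma_\partial$ the lemma follows.
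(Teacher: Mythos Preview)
Your proposal is correct and follows essentially the same approach as the paper: linearize to reduce to the linear ND map, build a geometric-optics parametrix with phase solving the eikonal equation and amplitudes solving transport equations, and then read off the jet of $g$ on $V_\partial$ from the resulting classical symbol by a recursive argument. The only cosmetic difference is that the paper first inverts the local ND map to obtain the full symbol of the local DN map $\Upsilon^{\mathrm{loc}}$ (as in \cite{Stefanov2018}) and then quotes the boundary-determination argument already carried out there, whereas you work directly with the ND symbol; both routes amount to the same computation.
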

\begin{proof}
Let $(y_|, \eta_|) \in T^*(\partial M)$ with $y_| \in V_\partial$ and we focus on the small conic neighborhood $\Gamma_\partial$ defined as above. 
Recall $\chi(x_|, \xi_|)$ is the cutoff $\Psi$DO of order zero supported in  $\Gamma_\partial$.

Consider the local version of the DN map $\Upsilon^{\text{loc}}$ for the linearized problem defined in \cite{Stefanov2018}.
It is shown to be a $\Psi$DO with a nonzero principal symbol
in $\Gamma_\partial$
and therefore has a parametrix $(\Upsilon^{\text{loc}})^{-1}$. 
One can see that $(\Upsilon^{\text{loc}})^{-1}$\
equal to the local ND map up to smoothing operators. 
Thus, we can reconstruct the full symbol $\sigma(\Upsilon^{\text{loc}})$ by the construction of parametrices for elliptic $\Psi$DOs from the local ND map, with an error in $S^{-\infty}$. 
If we denote by $\Upsilon_j^{\text{loc}}$ the local ND map with the metric $g_j$ for $j = 1,2$, then one has
\[
\sigma(\Upsilon_1^{\text{loc}}) = \sigma(\Upsilon_2^{\text{loc}})\quad \mod S^{-\infty}.
\]
It suffices to show the metric and its normal derivatives on $\partial M$ near $(y_|, \eta_|)$ are determined by the symbol $\sigma(\Upsilon^{\text{loc}})$ up to $S^{-\infty}$. 

We follow the proof of \cite[Theorem 3.2]{Stefanov2018} and choose a special designed function
\[
h(x_|) =e^{i\lambda x_| \cdot \xi_|} \chi(x_|, \xi_|)
\]
with large parameter $\lambda$,
where $\chi$ is a smooth cutoff function supported near $(y_|, \eta_|)$ and homogeneous in $\xi_|$ of order $0$. 
For $j =1,2$, we construct a sequence of geometric optics approximations of the local outgoing solutions near $(y_|, \eta_|)$ of the form 
\[
u_{N,j} (x) = e^{i \lambda \phi_j(x, \xi_|)} \sum_{k=0}^{N} \frac{1}{\lambda^k}a_{j,k}(x, \xi_|),
\]
where $\phi(x, \xi_|)$ is the phase function 
satisfying the eikonal equation and initial condition 
\begin{align*}
\partial_3\phi_j(x) = \sqrt{-g_j^{\alpha\beta}(x) \partial_\alpha \phi_j(x) \partial_\beta \phi_j (x)}, \text{ for } \alpha, \beta \leq 2, \quad 
\phi_j(x_|,0) = x_|\cdot \xi_|,
\end{align*}
and $a_j(x, \xi_|) $ is the amplitude with the asymptotic expansion $a_j = \sum_{k \leq 0} a_{j,k}$.
Near $y_|$, we can assume each $a_{j,k}(x, \xi_|)$ is homogeneous in $\xi_|$ of order $-k$ and satisfies the transport equation with the initial condition 
\begin{align*}
X_j a_{j,0} &= 0, \quad a_{j,0} (x_|, 0, \xi_|) = \chi,\\
X_j a_{j,k} &= r_k, \quad a_{j,k}(x_|, 0, \xi_|) = 0, \text{ for } k <0. 
\end{align*}
Here $X = (2 g_j^{\alpha \beta} \partial_\alpha \phi_j)\partial_\beta + \square_{g_j} \phi_j $ and $r_k$ is the term involving the derivatives w.r.t. $a_{j,0}, a_{j,1}, \ldots, a_{j,k-1}$ and $\phi_j$ of order no more than $k$.
In boundary normal coordinates, the local DN map is given by 
\[
\Upsilon_j^{\text{loc}}(h)(x_|) = - e^{i\lambda x_| \cdot \xi_|} (i \lambda \partial_3 \phi(x_|,0,\xi_|) + \sum_{k =0}^N \frac{1}{\lambda^k}a_{j,k}(x_|,0, \xi_|)) + O(\lambda^{-N-1}).
)
\]
On the other hand, by \cite[Theorem 3.1]{Stefanov2018} 
near $(y_|, \eta_|)$ we can show $\Upsilon_j^{\text{loc}}$ has the symbol 
\[
\sigma(\Upsilon_j^{\text{loc}})(y_|, \eta_|) = -i \sqrt{-g_j^{\alpha\beta}(y_|, 0)\eta_\alpha \eta_\beta } - \partial_n a_j(y_|,0,\eta_|),
\]
with the phase functions and amplitudes above. 
The symbol of $\Upsilon_j^{\text{loc}}$ coincide for $j=1,2$ asymptotically up to $S^{-\infty}$. 
Note the metric and its normal derivatives restricted on the boundary near $y$ are determined by analyzing 
this asymptotic expansion.
In particular, from the principal symbol 
we have 
\[
\partial_3\phi_1(x_|,0,\xi_|) = \partial_3\phi_2(x_|,0,\xi_|), \quad g_1^{\alpha\beta}(y_|, 0)\eta_\alpha \eta_\beta = g_2^{\alpha\beta}(y_|, 0)\eta_\alpha \eta_\beta,\ \alpha, \beta \leq 2. 
\]
As is stated in \cite{Stefanov2018}, by choosing $n(n-1)/2$ vectors in $\Gamma_\partial$ one can uniquely determine the boundary metric $g_j|_{\partial M}$ at $(y_|, \eta_|)$ from this quadratic form and therefore  $g_1|_{\partial M} =  g_2|_{\partial M}$ at  $(y_|, \eta_|)$. 
The normal derivative $\partial^k_3 g_{\alpha\beta}(y_|, 0)$ can be determined by an inductive procedure as in \cite{Stefanov2018}, for more details see also \cite{Hintz2021} and \cite{Sylvester1987}.
One can see this determination is local and only depends on the symbol of the local DN amp module $S^{-\infty}$ and therefore we have the desired result. 
%
\end{proof}

\subsection{Recovering the lens relation on the boundary}\label{subseclens}
Let $(z, \zeta) \in \LcMng$ be an inward pointing lightlike covectors on the boundary, see  (\ref{def_Lbundle}) for the definition of $\LcMng$.
The projection $(z_|, \zeta_|)$ of $(z, \zeta)$ on $\partial M$ is timelike.
Suppose the null geodesic $\gamma(t)$ starts from $(z, \zeta^\#)$ on the boundary and 
hits $\partial M$ transversally at the covector $(y, \eta) \in \LcMpo$. 
Note that the projection $(y_|, \eta_|)$ of $(y, \eta)$ on the boundary is also timelike. 
By \cite[Section 4]{Stefanov2018} the lens relation $\mathcal{L}$ mapping from a small conic neighborhood of 
$(z_|, \zeta_|)$ to a small conic neighborhood of  $(y_|, \eta_|)$ can be recovered from the Dirichlet-to-Neumann map of the linearized equation $\square_g v = 0$. 
In the lemma below, we follow the similar ideas to recover the lens relation for lightlike covectors in $T^\pm_{\partial M}M$ from the ND map of the boundary value problem (\ref{ihmNBC}).

\begin{lm}\label{lm_lens} 
Let $V_{\partial}$ be an open subset of $\partial M$ with $z, y \in V_{\partial}$. 
Suppose $(z, \zeta) \in \LcMng$ and $(y, \eta) \in \LcMpo$.
Then $(y_|, \eta_|) = \mathcal{L}(z_|, \zeta_|)$ is uniquely determined by the restricted ND map $\Lambda|_{V_\partial}$. 
For convenience, we abuse the notation to write $(y, \eta) = \mathcal{L}(z, \zeta)$ if there is no confusion. 
\end{lm}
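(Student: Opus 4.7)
The plan is to reduce to the linear problem and then track a single singularity along a null bicharacteristic from $z$ to $y$. First I would differentiate the ND map in $\epsilon_1$ at $0$: as observed at the start of Subsection~\ref{subsecbm}, the asymptotic expansion (\ref{expand_u}) gives
\[
\partial_{\epsilon_1}\Lambda(\epsilon_1 f_1)\big|_{\epsilon_1=0} = v_1\big|_{\partial M},
\]
where $v_1$ solves the \emph{linear} boundary value problem (\ref{eq_v1}) with Neumann data $f_1$. Thus from $\Lambda|_{V_\partial}$ we obtain the linear ND map $\Lambda^{\mathrm{lin}}|_{V_\partial}:f_1\mapsto v_1|_{\partial M}$ restricted to $V_\partial$, and it suffices to show that $\Lambda^{\mathrm{lin}}|_{V_\partial}$ determines $(y_|,\eta_|)$.

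Next, choose Neumann data $f_1\in\mathcal{E}'(V_\partial)$ that is a conormal distribution with $\WF(f_1)$ contained in an arbitrarily small conic neighborhood $\Gamma_\partial$ of $(z_|,\zeta_|)$ and with nonvanishing principal symbol there. Because $(z_|,\zeta_|)$ is timelike it lies in the hyperbolic set $\mathcal{H}$, and its two preimages under $\pi_b$ are the inward- and outward-pointing lightlike covectors over $z$. Under the Neumann boundary condition, the causal solution $v_1$ is produced by a parametrix construction of the form used in Subsection~\ref{subsec_U41d}: near $(z_|,\zeta_|)$ write $v_1=v^{\mathrm{inc}}+v^{\mathrm{ref}}$ with Lagrangian phase functions satisfying the eikonal equation and $\partial_\nu\phi^{\mathrm{inc}}=-\partial_\nu\phi^{\mathrm{ref}}$, and set the amplitude so that $\partial_\nu(v^{\mathrm{inc}}+v^{\mathrm{ref}})|_{\partial M}=f_1$ modulo smoothing. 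This produces a distribution $v_1$ with $\WF(v_1)$ near $z$ equal to the two lifts $(z,\zeta^\pm)$, both with nonzero principal symbol determined by that of $f_1$. The causal/forward condition $v_1=0$ for $t<0$ then picks out the \emph{future-pointing} lift $(z,\zeta^-)\in L^{*,+}_{\partial M,-}M$ (which by hypothesis is our inward pointing $(z,\zeta)$) as the one whose bicharacteristic propagates into $M$.

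Now I apply Proposition~\ref{pp_wfb} and Lemma~\ref{lm_gh} to propagate this singularity: since $\partial M$ is null-convex and $\gamma_{z,\zeta^\#}$ hits $\partial M$ transversally for the first time at $(y,\eta)\in L^{*,+}_{\partial M,+}M$, the broken bicharacteristic arc $\Theta^b_{z,\zeta}$ reaches the boundary at $(y,\eta)$ before any reflection, and along it $\WF_b(v_1)$ contains the image of $(z,\zeta)$ under the bicharacteristic flow, with nonzero principal symbol (carried by the transport equation which is an ODE with nonzero initial data). Taking the trace at the boundary, the same reflection analysis used in Subsection~\ref{subsec_U41d} shows that the principal symbol of $v^{\mathrm{inc}}|_{\partial M}$ and $v^{\mathrm{ref}}|_{\partial M}$ add up, so $(y_|,\eta_|)\in\WF(v_1|_{\partial M})=\WF(\Lambda^{\mathrm{lin}}f_1)$ with nonzero principal symbol.

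To finish, I would establish the converse: if $(y'_|,\eta'_|)\in\WF(\Lambda^{\mathrm{lin}}f_1)\cap T^*V_\partial$ then $(y'_|,\eta'_|)$ must lie on the image under $\pi_b$ of the broken bicharacteristic through $(z,\zeta)$. This is again just propagation of singularities applied to $v_1$ in the other direction, together with the fact that in the hyperbolic region $\pi_b$ is injective on outward-pointing lifts. Hence $(y'_|,\eta'_|)=(y_|,\eta_|)$, and by varying $(z,\zeta)$ and reading off the earliest boundary covector in $\WF(\Lambda^{\mathrm{lin}}f_1)$ lying along the forward-in-time broken flow, one recovers the lens relation from $\Lambda|_{V_\partial}$. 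The main obstacle is the principal-symbol bookkeeping at the two boundary points: one must verify that the Neumann reflection coefficients on entry (to produce $v_1$ from $f_1$) and on exit (to produce $v_1|_{\partial M}$ from the interior singularity) do not vanish for timelike boundary covectors. This follows from the explicit formula $\partial_\nu\phi^{\mathrm{inc}}=-\partial_\nu\phi^{\mathrm{ref}}\neq 0$ in the hyperbolic region, exactly as in \cite{Stefanov2018}.
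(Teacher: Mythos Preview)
Your proposal is correct and follows essentially the same route as the paper: linearize the ND map to obtain the linear ND map, plant a conormal singularity at $(z_|,\zeta_|)$, write $v_1=v^{\mathrm{inc}}+v^{\mathrm{ref}}$, propagate the singularity along the null bicharacteristic to $(y,\eta)$, and use the nonvanishing Neumann reflection coefficient (equivalently, the nonvanishing of the trace FIO symbol) to conclude that $(y_|,\eta_|)\in\WF(\Lambda^{\mathrm{lin}}f_1)$ with nonzero principal symbol. The paper's proof is terser and defers the symbol computations to \cite{Stefanov2018,Hintz2020}, while you spell out more of the propagation-of-singularities and converse reasoning, but the content is the same.
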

\begin{proof}
    One can follow the same arguments as in \cite{Stefanov2018} or we follow the ideas in \cite{Kurylev2018}. 
    For the light-like covector $(z, \zeta)$, 
    we construct a distribution $f_1 \in \mathcal{E}'(\partial M)$ of which the wave front set is exactly the half line 
    $\{(z_|,sz_|) \in T^*(\partial M); \ s > 0\}$. 
    Then $\partial_{\epsilon_1} \Lambda(\epsilon_1 f_1)|_{\epsilon_1 = 0}$ is the solution to the wave equation with Neumann boundary data $f_1$, i.e. $\partial_{\epsilon_1} \Lambda(\epsilon_1 f_1)|_{\epsilon_1 = 0} = v_1$ which solves (\ref{eq_v1}). 
    We write $v_1$ as the sum of the incident wave $v_1^{\text{inc}}$ and the reflected wave $v_1^{\text{ref}}$ as before. 
    The incident part $v_1^{\text{inc}}$ has singularities along the bicharacteristic of $\square_g$, which is locally given by $(\gamma_{x, \xi}(t), (\dot{\gamma}_{z, \zeta}(t))^\#)$ before $\gamma_{z, \zeta}(t)$ hits the boundary transversally at $(y, \eta^\#)$ again. 
    The reflected part $v_1^{\text{ref}}$ vanishes before the intersection and satisfies 
    $\square_g v_1^{\text{ref}}= 0$ with $\partial_\nu  (v_1^{\text{inc}} +   v_1^{ \text{ref}})|_{\partial M} = f_1$ near $(y, \eta)$. 
    Actually in a small conic neighborhood of $(y,\eta)$, we have $\partial_\nu ( v_1^{\text{inc}} +   v_1^{ \text{ref}})|_{\partial M}  = 0 \mod C^\infty$ according to the wave front set of $f_1$. 
    Following the analysis in \cite{Stefanov2018} and \cite{Hintz2020}, 
    we have the principal symbol of the restriction of $v_1$ at $(y_|, \eta_|)$ is 
    \[
    \sigma^{(p)}(\mathcal{R}(v_1))(y_|, \eta_|) = 2\sigma^{(p)}(\mathcal{R})(y_|, \eta_|, y, \eta) \sigma^{(p)}(v_1^{\text{inc}})(y, \eta),
    \]
    where $\mathcal{R}$ is trace operator on the boundary. The ND map determines \[
    \sigma^{(p)} (\partial_{\epsilon_1} \Lambda(\epsilon_1 f_1))(y_|, \eta_|) = \sigma^{(p)}(\mathcal{R}(v_1))(y_|, \eta_|)
    \] and therefore determines $(y,\eta)$ since $\mathcal{R}(y_|, \eta_|, y, \eta)$ is nonzero for light-like covectors. 
    \end{proof}

\section{The proof of Theorem \ref{thm_recoverg}}\label{sec_recovery}
In this section, let $g_1$, $g_2$ be two metrics on a smooth manifold $M$ such that $(M, g_1)$ and $(M, g_2)$ are globally hyperbolic Lorentzian manifolds with timelike and strictly null-convex boundaries.
For $j = 1, 2$, consider the Neumann-Dirichlet map $\Lambda_j$ associated with the boundary value problem (\ref{ihmNBC}) in $(M, g_j)$. 
Let $\hmupj: \ [0,1] \rightarrow (0,T) \times \partial N$ be two timelike paths 
with $y^{\pm,(j)} = \hmupj(s_\pm)$ on the boundary,  where $ 0 < s_- < s_+ <1$, see Figure \ref{fig_phi}.
Let $O_j$ be a connected open set in $\partial N$ and $V_{\partial,j} = (0, T) \times O_j$
be small neighborhoods of $\hmupj((0,1))$ satisfying $I(\hmupj(0), \hmupj(1)) \subset V_{\partial, j}$. 
\begin{figure}[h]
    \centering
    \includegraphics[height=0.2\textwidth]{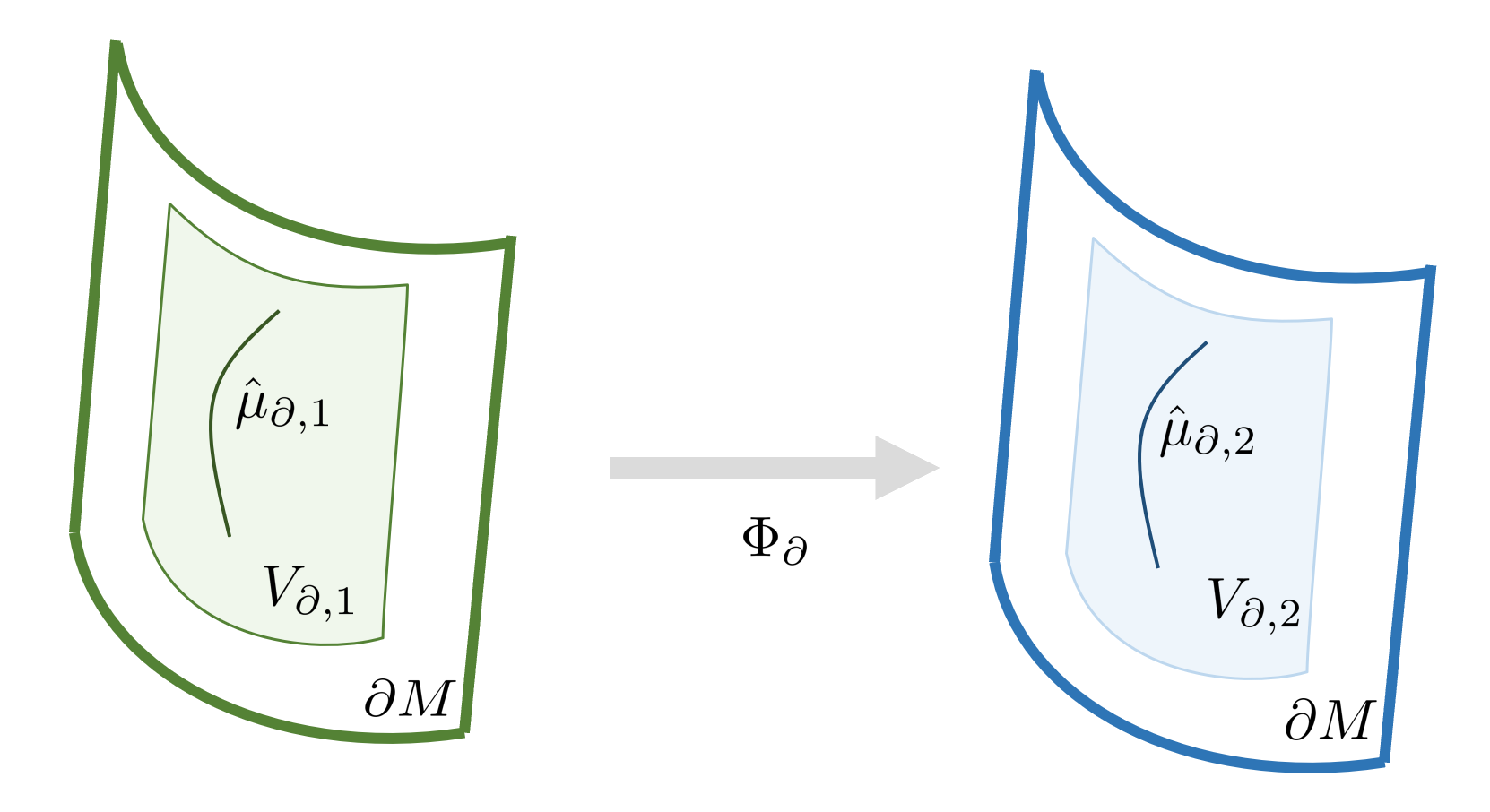}
    \caption{Illustration of the timelike paths $\hmupj$ and the diffeomorphism $\Phi_\partial$. }
    \label{fig_phi}
\end{figure}
Suppose there exists 
a diffeomorphism $\Phi_\partial: V_{\partial, 1} \rightarrow V_{\partial, 2}$ such that 
$\Phi_\partial(y^{\pm,(1)}) = y^{\pm,(2)}$  
as well as
\begin{align}  \label{difflam}
(\Phi_\partial^{-1})^* \circ \Lambda_1|_{V_{\partial, 1}} \circ \Phi_\partial^* = \Lambda_2|_{V_{\partial, 2}}.
\end{align}
By Section \ref{subsecbm}, we can recover the boundary metric $g_j|_{V_{\partial, j}}$ from $\Lambda_j|_{V_{\partial, j}}$. 
Using the Egorov's Theorem one can find from (\ref{difflam}) that the principal symbols of the local ND map for the wave equation are related and therefore
$
g_1|_{V_{\partial, 1}} = \Phi_\partial^* \ g_2|_{V_{\partial, 2}}.
$
In the following, we would like to show the statements (A1-2) in Assumption \ref{assump} are satisfied for $(M, g_j)$, if we ae given $\Lambda_j|_{V_{\partial, j}}$ and (\ref{difflam}), for $j = 1,2$.

%

\subsection{Extension}
In the subsection we extend $(M,g_j)$ to a time-oriented Lorentzian manifold $(\widetilde{M}_j, \widetilde{g}_j)$, for $j =1,2$.
%
%
 First, we choose the semigeodesic normal coordinates $(t, x'', x^n)$ for $g_1$ in a small neighborhood of a fixed point $y \in \partial M$.
 The existence of such coordinates can be found in \cite[Lemma 2.3]{Stefanov2018}.
 Then we extend $g_1$ slightly beyond $\partial M$ near $y$ by the following lemma. 
%

	\begin{lm}
	Suppose $(M,g_1)$ is a smooth globally hyperbolic Lorentzian manifold with boundary as above. 
	For fixed  $y \in \partial M$, 
	let $S$ be 
	a small open set of $\partial M$ where locally near $y$ we identify $M$ with $S \times [0,1)$. 
	Then there exists a positive number $\epsilon$ 
	such that we can extend $g_1$ to a smooth Lorentzian metric $\tilde{g}_1$ in $S \times (-\epsilon, 1)$ 
	with 
	\begin{itemize}
		\item[(1)] $\tg_1|_{S \times [0, \epsilon)} = g_1$,
		\item[(2)] $\tg_1$  takes the product form. 
	\end{itemize} 
	\end{lm}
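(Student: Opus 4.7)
The plan is to work in semigeodesic normal coordinates $(x'', x^n)$ adapted to $\partial M$ near $y$, which exist by \cite[Lemma 2.3]{Stefanov2018}. In these coordinates the tubular neighborhood $S \times [0,1)$ is identified so that $\partial M = \{x^n = 0\}$ and the metric takes the form
\[
g_1 = g_{1,\alpha\beta}(x'', x^n)\, dx^\alpha \otimes dx^\beta + dx^n \otimes dx^n, \qquad 1 \leq \alpha, \beta \leq n-1,
\]
with no cross terms $dx^\alpha \otimes dx^n$. This already puts $g_1$ in the product form required by (2); to preserve that form in the extension, it suffices to extend each of the finitely many tangential coefficients $g_{1,\alpha\beta}(x'', \cdot)$ smoothly in $x^n$ across $\{x^n = 0\}$, and leave the $dx^n \otimes dx^n$ piece intact.

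For the extension itself I would invoke the Seeley extension theorem (Whitney works equally well): there is a continuous linear operator
\[
E : C^\infty(S \times [0,1)) \longrightarrow C^\infty(S \times (-1,1))
\]
such that $Eu|_{S \times [0,1)} = u$ for every $u$. Applying $E$ componentwise produces smooth extensions $\tilde{g}_{1,\alpha\beta}$ on $S \times (-1,1)$, and I would define
\[
\tilde{g}_1 \;=\; \tilde{g}_{1,\alpha\beta}(x'', x^n)\, dx^\alpha \otimes dx^\beta + dx^n \otimes dx^n.
\]
By construction $\tilde{g}_1$ is a smooth symmetric $(0,2)$-tensor that satisfies (1) and retains the product form (2).

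It remains to choose $\epsilon > 0$ so that $\tilde{g}_1$ is actually Lorentzian, i.e.\ non-degenerate of signature $(-,+,+,+)$, on $S \times (-\epsilon, 1)$. These are open conditions on the coefficients of a symmetric $(0,2)$-tensor; since $\tilde{g}_1 = g_1$ on $\{x^n \geq 0\}$ and the coefficients $\tilde{g}_{1,\alpha\beta}$ are continuous across $\{x^n = 0\}$, there is a neighborhood of $\partial M$ on the negative side where non-degeneracy and signature persist. Shrinking $S$ if necessary to obtain a uniform such neighborhood yields the desired $\epsilon > 0$. I do not anticipate a substantive obstacle: once the semigeodesic coordinates are in hand the argument is entirely standard, with Seeley extension as the only off-the-shelf input and a routine openness argument for the signature.
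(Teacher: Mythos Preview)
Your proposal is correct and follows essentially the same route as the paper: work in semigeodesic normal coordinates so that $g_1$ already has the product form, extend the tangential coefficients smoothly across $\{x^n=0\}$, and then shrink $\epsilon$ using that the Lorentzian signature is an open condition. The only differences are cosmetic: you invoke Seeley/Whitney explicitly where the paper says the extension is done ``in some specific way,'' and you phrase the signature argument as a general openness statement whereas the paper tracks the positivity of the single coefficient $\beta$ in front of $dt\otimes dt$.
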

	\begin{proof}
	In the semigeodesic coordinate system the metric $g_1$ on the boundary has the form 
	\[
	g_1|_S = -\beta(t, x'', 0 ) \diff t \otimes \diff t + \kappa(t, x'', 0) \diff x^\alpha \otimes \diff x^\beta, \text{ where } 0\leq \alpha, \beta \leq 2.
	\]
	We extend $g_1$ slightly beyond $\partial M$ near $y$
	by extending the smooth function  $\beta,\kappa$ to $\widetilde{\beta},\widetilde{\kappa}$ for $x^n < 0$ in some specific way, and therefore we have
	\[
	\tilde{g}_1 = -\widetilde{\beta}(t, x'', x^n) \diff t \otimes \diff t + \widetilde{\kappa}(t, x'', x^n) \diff x^\alpha \otimes \diff x^\beta + \diff x^n \otimes \diff x^n.
	\]
	This smooth extension depends on the values and the derivatives of $\beta, \kappa$ restricted to $S$.
	Since $\beta(t, x'', 0 ) >0$, there exists $\epsilon$ such that the extended function $\widetilde{\beta}(t, x'', x^n ) > 0$ for $x^n > -\epsilon$. Thus, the extended metric $\tilde{g}_1 $ is a Lorentzian one with the product form. 
	
\end{proof}
 Using a partition of unity, we can extend $(M, g_1)$ to a Lorentzian manifold $(\widetilde{M}, \widetilde{g}_1)$ in a small collar neighborhood of $\partial M$. 
 The manifold $(\widetilde{M}, \widetilde{g}_1)$ is time-oriented and moreover 
 it is globally hyperbolic since the extended metric $\widetilde{g}_1$ takes the product form.  
 
 Next, we consider the diffeomorphism $\Phi_\partial: V_{\partial, 1} \rightarrow V_{\partial, 2}$ , which 
 can be extended to a diffeomorphism $\Phi: \tM \rightarrow \tM$ by \cite{Palais1960}.
 Additionally, we can require $\Phi_\partial$ to be extended to $\Phi$ such that
 \[
 \Phi : {V_{\partial, 1}} \times (-\ep_1, 0) \rightarrow {V_{\partial, 2}} \times (-\ep_2, 0),
 \]
 for some $\ep_1, \ep_2 >0$.
 The following lemma shows with (\ref{difflam}) the manifold $(\widetilde{M}, (\Phi^{-1})^*\widetilde{g}_1)$ is a smooth extension of $(M, g_2)$ in the extended region ${V_{\partial, 2}} \times (-\ep_2, 0)$. 
 \begin{lm}\label{lm_phistarg}
     Let $\Phi_\partial$ be a diffeomorphism from $V_{\partial, 1}$ to $V_{\partial, 2}$
     and let $\Phi$ be a diffeomorphism from $\tM$ to itself such that $\Phi|_{V_{\partial,1}} = \Phi_\partial$.
     Suppose $(\Phi_\partial^{-1})^* \circ \Lambda_1|_{V_{\partial, 1}} \circ \Phi_\partial^* = \Lambda_2|_{V_{\partial, 2}}$. 
     Then $(\Phi^{-1})^*g_1$ coincide with $g_2$ on $V_{\partial, 2}$ and their derivatives of any order are equal on $V_{\partial, 2}$. 
 \end{lm}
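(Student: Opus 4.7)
The plan is to reduce the claim to the boundary jet-determination result of Section \ref{subsecbm}, carried out in common semigeodesic normal coordinates, and to exploit the freedom already allowed in the paragraph preceding the lemma by choosing the extension $\Phi$ to act as a product on the collar, i.e.\ $\Phi(\bar{x}', x'^n) = (\Phi_\partial(\bar{x}'), x'^n)$ in semigeodesic charts adapted to $g_1$ and $g_2$. This choice is consistent with the stated requirement $\Phi : V_{\partial,1} \times (-\ep_1, 0) \to V_{\partial,2} \times (-\ep_2, 0)$, and it is under this specific extension (to which the paper implicitly commits) that the jets of $(\Phi^{-1})^*g_1$ along $V_{\partial,2}$ become intrinsic.

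First I would differentiate the intertwining identity $(\Phi_\partial^{-1})^* \circ \Lambda_1|_{V_{\partial,1}} \circ \Phi_\partial^* = \Lambda_2|_{V_{\partial,2}}$ in $\ep_1$ at zero. By the asymptotic expansion (\ref{expand_u}), this yields the analogous intertwining between the Neumann-to-Dirichlet maps of the linear wave equations $\square_{g_j} v = 0$ on $V_{\partial,j}$. Working microlocally in a conic neighbourhood of a timelike covector, the corresponding local Dirichlet-to-Neumann $\Psi$DOs $\Upsilon_j^{\mathrm{loc}}$ of Section \ref{subsecbm} then satisfy
\[
(\Phi_\partial^{-1})^* \Upsilon_1^{\mathrm{loc}} \Phi_\partial^* = \Upsilon_2^{\mathrm{loc}} \text{ modulo } S^{-\infty}.
\]
By Egorov's theorem, the two full symbols agree up to the canonical transformation on $T^*\partial M$ induced by $\Phi_\partial$.

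Next I would fix semigeodesic normal coordinates $(\bar{x}, x^n)$ for $g_2$ on $V_{\partial,2} \times (-\ep_2, 0)$ and transport them by $(\Phi_\partial^{-1}, \mathrm{id})$ to paired semigeodesic coordinates for $g_1$ on $V_{\partial,1}\times (-\ep_1, 0)$. With $\Phi$ chosen as the product above in these paired charts, the components of $(\Phi^{-1})^*g_1$ at the boundary are obtained by substituting $\Phi_\partial^{-1}(\bar{x})$ into the coefficients of $g_1$, while the normal component remains $(dx^n)^2$. The inductive reconstruction in Section \ref{subsecbm} of the tangential block $\kappa_j$ and its iterated normal derivatives $\partial_n^k \kappa_j$, $\partial_n^k \beta_j$ from the full symbol of $\Upsilon_j^{\mathrm{loc}}$, together with the symbol matching of the previous step, then yields the jet identities
\[
\partial_n^k g_1|_{V_{\partial,1}} = \Phi_\partial^*\bigl(\partial_n^k g_2|_{V_{\partial,2}}\bigr), \qquad k \geq 0,
\]
read off in the paired semigeodesic charts.

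Combining these two steps, in the chosen coordinate chart $(\Phi^{-1})^*g_1$ has the same Taylor expansion in $x^n$ at $x^n = 0$ as $g_2$, so $(\Phi^{-1})^*g_1$ and $g_2$ agree to infinite order along $V_{\partial,2}$, as claimed. The main obstacle is precisely that the lemma as worded does not pin down the extension beyond $V_{\partial,1}$, so the proof cannot be carried out for a genuinely arbitrary $\Phi$; one must explicitly invoke the freedom stated in the preceding paragraph and extend $\Phi_\partial$ as a product in semigeodesic coordinates. Only with this extension do the normal derivatives of the pullback metric reduce to quantities reconstructible from $\Upsilon_j^{\mathrm{loc}}$ by the argument of Section \ref{subsecbm}, and a partition-of-unity patching over $V_{\partial,2}$ promotes the local choice to the global diffeomorphism $\Phi$ appearing in the statement.
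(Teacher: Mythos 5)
Your proposal is correct in substance but takes a genuinely different route from the paper's. The paper proves the lemma by a single change of variables in the full \emph{nonlinear} boundary value problem: setting $v=(\Phi^{-1})^*u$, it checks that the d'Alembertian, the gradient nonlinearity and the Neumann trace all transform covariantly, so that $(\Phi_\partial^{-1})^*\circ\Lambda_1|_{V_{\partial,1}}\circ\Phi_\partial^*$ is \emph{exactly} the ND map of the pulled-back metric $(\Phi^{-1})^*g_1$ on $V_{\partial,2}$; the hypothesis then says two metrics on the same side of the boundary have the same ND map, and the jet equality is precisely the preceding lemma of Section \ref{subsecbm}. This route is shorter and yields the gauge-invariance Lemma \ref{lm_gauge} as a byproduct. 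You instead pass immediately to the first linearization, conjugate the local DN map $\Upsilon_1^{\mathrm{loc}}$ by $\Phi_\partial^*$, and match full symbols in paired semigeodesic charts --- in effect re-deriving the covariance of the ND map at the symbolic level rather than at the level of the equation; both arguments then funnel into the same inductive jet reconstruction, so your proof goes through. Two remarks. First, your observation that the conclusion cannot hold for a genuinely arbitrary extension $\Phi$ is a fair criticism of the statement: changing $\Phi$ off $V_{\partial,1}$ changes the normal jet of $(\Phi^{-1})^*g_1$, so the conclusion must be read either for a normalized $\Phi$ (your product choice) or modulo a further boundary-fixing diffeomorphism putting $(\Phi^{-1})^*g_1$ and $g_2$ in common semigeodesic coordinates, which is how the paper in fact uses the lemma. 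Second, the phrase in your middle paragraph about transporting the $g_2$-semigeodesic chart by $(\Phi_\partial^{-1},\mathrm{id})$ to get ``paired semigeodesic coordinates for $g_1$'' is off as written --- the pullback of a $g_2$-semigeodesic chart is semigeodesic for $\Phi^*g_2$, not for $g_1$; what you need, and what your opening paragraph correctly describes, is to take semigeodesic charts for each metric separately and let $\Phi$ be the product map identifying them, so that $(\Phi^{-1})^*g_1$ and $g_2$ are both in semigeodesic form in the same target chart.
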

 \begin{proof}
 We show $(\Phi_\partial^{-1})^* \circ \Lambda_1|_{V_{\partial, 1}} \circ \Phi_\partial^* $  can be regarded as a ND map with the metric  $(\Phi^{-1})^*g_1$ locally on $V_{\partial, 2}$. 
 Section \ref{subsecbm} shows that the boundary metric and its normal derivatives of any order are determined by the ND map locally. Thus we must have $(\Phi^{-1})^*g_1$ and $g_2$ are equal on $V_{\partial, 2}$ and so are their derivatives there.
 
 Indeed, for any $f \in \mathcal{E}'(V_{\partial,2})$, consider $h = \Lambda_1|_{V_{\partial, 1}}  (\Phi_\partial^*f)$.
 This implies there exists an outgoing solution $u$ to the boundary value problem 
 \begin{align*}
 \square_{g_1} u + w(x,u, \nabla_{g_1} u) &= 0, \quad \text{on } M,\\
 \partial_\nu u(x) &= \Phi_\partial^*f, \quad  \text{on } \partial M, 
 \end{align*}
 and $ u|_{V_{\partial, 1}} = h$. 
 Now let $x$ be the local coordinate in $\tM$ near $V_{\partial, 1}$ and $y$ be that near $V_{\partial, 2}$ with $y = \Phi(x)$. 
 Moreover, when $x \in V_{\partial, 1}$, one has $\Phi(x) = \Phi_\partial (x) \in  V_{\partial, 2}$.
 Consider the pull back $v = (\Phi^{-1})^*u$.
 One can find that 
 \[
 \nabla_{(\Phi^{-1})^*g_1} v = (\Phi^{-1})^*(\nabla_{g_1} u), \quad 
 \square_{(\Phi^{-1})^*g_1} v \circ \Phi = \square_{g_1} u, 
 \]
 and 
 \begin{align*}
 \partial_\nu u = \nu(\Phi^* v) =( (\Phi_* \nu) v) \circ \Phi = \tilde{\nu}(v)\circ \Phi, 
 \end{align*}
 see \cite[Lemma 2.1]{Stefanov2018}. 
 It follows that 
 \[
  \partial_{\tilde{\nu}} v|_{\partial M}= f, 
  \quad (\Phi^{-1})^*w(y, v, \nabla_{(\Phi^{-1})^*g_1} v) = w(x, u, \nabla_{g_1} u).
 \]
 Additionally, one has $ h = u|_{V_{\partial, 1}} = \Phi^* v|_{V_{\partial, 1}}$ and therefore \[
 v|_{V_{\partial, 2}} = ( \Phi_\partial^{-1})^* h = (\Phi_\partial^{-1})^* \circ \Lambda_1|_{V_{\partial, 1}} \circ \Phi_\partial^*f. 
 \]
 Thus, we show that $v$ is the solution to the boundary value problem
 \begin{align*}
 \square_{(\phi^{-1})^*g_1} v  + (\Phi^{-1})^*w(y, v, \nabla_{(\Phi^{-1})^*g_1} v)  &= 0, \quad \text{on } M,\\
 \partial_{\tilde{\nu}} v(y) &= f, \quad  \text{on } \partial M, 
 \end{align*}
 which implies $(\Phi_\partial^{-1})^* \circ \Lambda_1|_{V_{\partial, 1}} \circ \Phi_\partial^*$ is the ND map with the metric $(\Phi^{-1})^*g_1$ restricted to ${V_{\partial, 2}}$. This finishes the proof. 
 \end{proof}
 The same idea in the proof above shows the gauge invariance of the ND map under diffeomorphisms fixing the boundary. Indeed, by taking $\Phi^{-1} = \Psi$, one has the following lemma. 

\begin{lm}\label{lm_gauge}
     Suppose $\Psi: M \rightarrow M$ is a diffeomorphism with $\Psi|_{\partial M} = Id$.
     Let $\Lambda_{g,w}$ be the ND map for the boundary value problem (\ref{ihmNBC}) with the metric $g$ and quadratic form $w$. 
     Then we must have
     $
     \Lambda_{g,w} = {\Lambda}_{\Psi^*g, \Psi^* w}. 
     $
\end{lm}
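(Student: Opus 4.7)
The plan is to imitate the computation carried out in Lemma \ref{lm_phistarg}, specialized to the case where the diffeomorphism maps $M$ to itself and restricts to the identity on $\partial M$. Given $f$ in a small neighborhood of $0$ in $C^{m+1}([0,T]\times\partial N)$, let $u$ denote the unique solution of
\begin{equation*}
\square_g u + w(x, u, \nabla_g u) = 0 \text{ on } M, \quad \partial_\nu u = f \text{ on } \partial M, \quad u = 0 \text{ for } t<0,
\end{equation*}
so that $\Lambda_{g,w}(f) = u|_{\partial M}$. Set $v = \Psi^* u = u \circ \Psi$ on $M$; because $\Psi$ preserves the time orientation (hence the past region), $v$ vanishes for $t<0$.

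Next I would use the diffeomorphism invariance of the wave operator and the gradient to write
\begin{equation*}
\square_{\Psi^* g} v = (\square_g u) \circ \Psi, \qquad \nabla_{\Psi^* g} v = \Psi^*(\nabla_g u),
\end{equation*}
so that pulling the PDE back through $\Psi$ yields
\begin{equation*}
\square_{\Psi^* g} v + (\Psi^* w)\bigl(y, v, \nabla_{\Psi^* g} v\bigr) = 0 \text{ on } M,
\end{equation*}
where $\Psi^* w$ is the natural pullback of the quadratic form described in Lemma \ref{lm_phistarg}.

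For the boundary condition, let $\tilde\nu$ be the outward unit normal for $\Psi^* g$. Since $\Psi|_{\partial M} = \mathrm{Id}$, the differential $d\Psi_y$ acts as the identity on $T_y\partial M$ for every $y \in \partial M$. From $(\Psi^* g)(\tilde\nu, X) = g(d\Psi\, \tilde\nu, d\Psi X) = g(d\Psi\, \tilde\nu, X) = 0$ for $X \in T_y\partial M$, together with $|d\Psi\,\tilde\nu|_g = |\tilde\nu|_{\Psi^* g} = 1$ and a consistent outward orientation, one concludes $d\Psi\, \tilde\nu = \nu$. Hence
\begin{equation*}
\partial_{\tilde\nu} v = (d\Psi\,\tilde\nu)(u) \circ \Psi = (\partial_\nu u) \circ \Psi = f \circ \Psi|_{\partial M} = f
\end{equation*}
on $\partial M$. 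Thus $v$ solves the boundary value problem (\ref{ihmNBC}) with metric $\Psi^* g$ and nonlinearity $\Psi^* w$. By uniqueness for the well-posed problem established in Section \ref{Sec_well}, $v$ is the corresponding solution, and therefore
\begin{equation*}
\Lambda_{\Psi^* g,\Psi^* w}(f) = v|_{\partial M} = u|_{\partial M} = \Lambda_{g,w}(f),
\end{equation*}
the second equality because $\Psi = \mathrm{Id}$ on $\partial M$.

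The only point requiring care is the identification $d\Psi\, \tilde\nu = \nu$ at the boundary, which is not completely obvious but follows from the argument above; the remaining steps are routine pullback calculations, identical in structure to those in Lemma \ref{lm_phistarg}.
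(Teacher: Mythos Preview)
Your proof is correct and follows exactly the approach indicated in the paper: the paper's proof of Lemma~\ref{lm_gauge} consists of the single sentence ``by taking $\Phi^{-1}=\Psi$'' in the computation of Lemma~\ref{lm_phistarg}, and you have written out precisely this specialization. Your added verification that $d\Psi\,\tilde\nu=\nu$ on $\partial M$ is the one nontrivial step not spelled out in the paper, and your argument for it is correct.
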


\subsection{The construction of the observation set.}\label{subsec_constrobs}
Now we would like to extend $\hmupj$ to a timelike path $\hmuj$ in $ {V_{\partial, j}} \times (-\ep_j, 0)$ 
for $j = 1,2$ and choose a small neighborhood $V_j$ of $\hmuj$. 

\begin{figure}[h]
    \centering
    \includegraphics[height=0.35\textwidth]{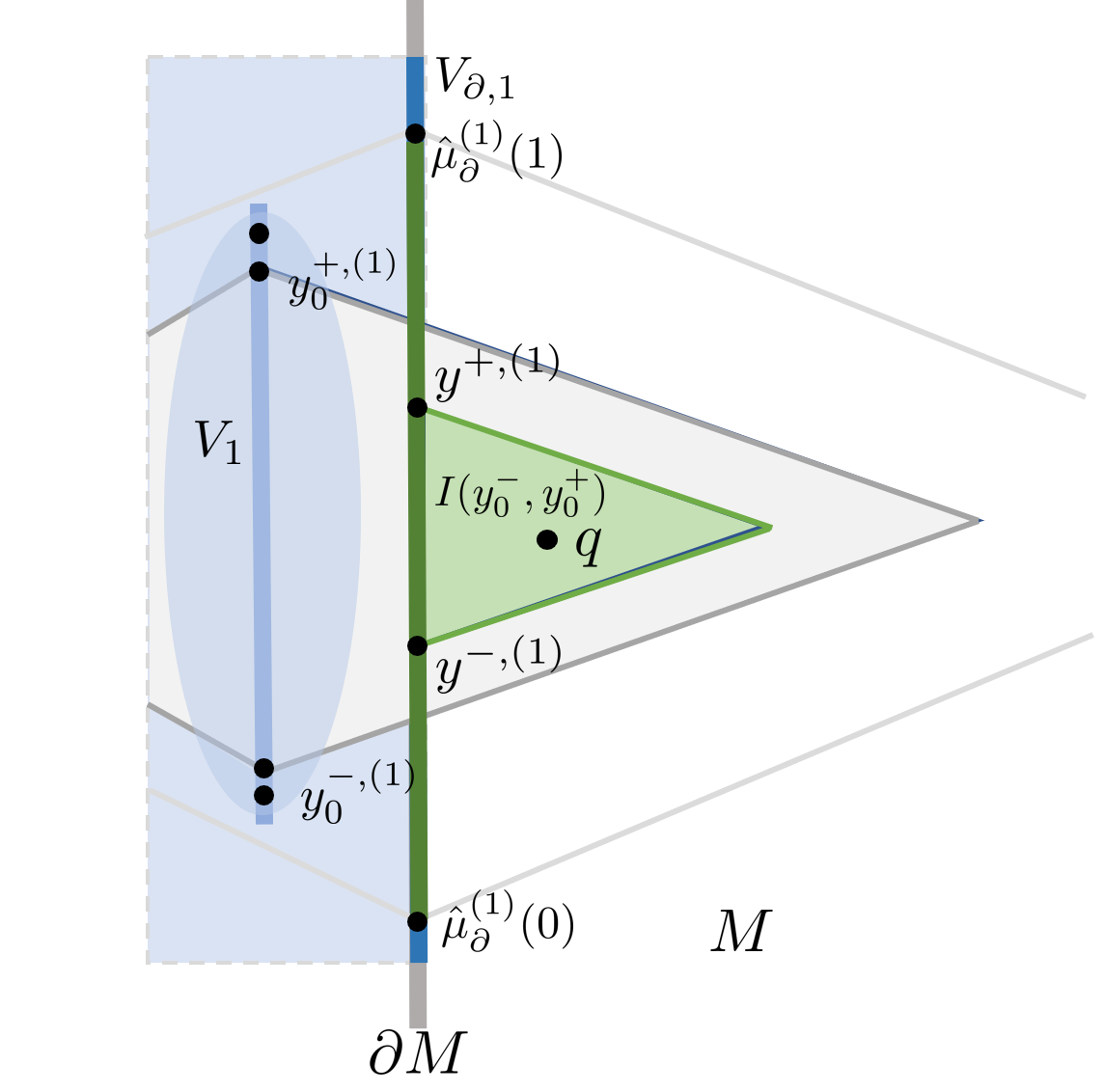}
    \caption{Illustration of the construction of $V_j$ for $j=1$. }
    \label{fig_V}
\end{figure}

First, we extend everything for $j = 1$, see Figure \ref{fig_V},  and then use the diffeomorphism $\Phi$ to get the corresponding extension for $j = 2$. Consider the set $J(\hmupa(0), y^{-,(1)})$ and $J(y^{+,(1)}, \hmupa(1))$. Both of them have a nonempty intersection with  $\widetilde{M} \setminus M$ and we can pick two points from the intersections respectively and call them $\hmua(0)$ and $\hmua(1)$. 
Pick $\hmua$ as any timelike path connecting $\hmua(0)$ and $ \hmua(1)$.
Then let 
\[
y_0^{-,(1)} \in I^+(\hmua(0)) \cap J^-(y^-), \quad y_0^{+,(1)} \in I^-(\hmua(1)) \cap J^+(y^+).
\]

It follows that
\[
\hmupa(0) \ll \hmua(0) \leq y_0^{-,(1)} \leq  y^{-,(1)} \ll y^{+,(1)}\leq y_0^{+,(1)} \leq \hmua(1) \ll \hmupa(1),
\]
and therefore
\[
I(y^{-,(1)},  y^{+,(1)}) \subset  I(y_0^{-,(1)},  y_0^{+,(1)}) \subset I(\hmua(0), \hmua(1)) \subset I(\hmupa(0), \hmupa(1)).
\]
Since all sets above are open, 
we can choose an open neighborhood $V_1$ of $\hmua$ such that 
\[
\hmua((0,1)) \subset V_1 \subset I(\hmupa(0), \hmupa(1)).
\]
To extend $\hmub$, we define $\hmub(t) = \Phi(\hmua(t))$ and $V_2 = \Phi(V_1)$.  
This construction of $\hmuj$ and $V_j$ for $j = 1, 2$ guarantees that 
if a null geodesic $\gamma_{x,v}(\mathbb{R}_+)$ with $(x,v) \in L^+V_j$ intersect the boundary before it returns to $V_j$, then its  
intersection with the boundary is contained in the region 
\[
(I(\hmupj(0), \hmupj(1)) \cap \partial M) \subset V_{\partial, j}, \quad j = 1,2.
\]


\subsection{Determine the conformal class of the metric.}
We extend $(M, g_j)$ to $(\tM, \tg_j)$ as above, for $j = 1,2$. 
In particular, we can assume the metric $\tg_2$ coincides with $(\Phi^{-1})^*\widetilde{g}_1$ in the extended region $V_{\partial,2} \times (-\ep_2, 0)$, for some $\ep_2 >0$. 
The statement (A1) is fulfilled by Section \ref{subsec_constrobs}. 
To verify (A2), note that $t$ is the smooth time function for the global splitting.
It follows that $t$ is strictly increasing along any future-pointed casual curves and strictly decreasing along any past-pointed ones. 
We denote the region $V_{\partial,j} \times (-\ep_j, 0]$ by $(0, T) \times W_j$. 
Then there exists small $\epsilon > 0$ such that in $\tM \setminus \intM$ we have
\[
I(\hat{\mu}^{(j)}(-\epsilon), \hat{\mu}^{(j)}(1+ \epsilon))  \subset (0, T) \times W_j, 
\] 
for $j = 1,2$. This implies that the assumption (A2) is valid for both $j=1,2$. 
  One actually has 
 \[
 (V_2, \widetilde{g}_2|_{V_2}) = (\Phi(V_1), (\Phi^{-1})^* \widetilde{g}_1|_{\Phi(V_1)}).
 \]
 The conditions for \cite[Theorem 4.5]{Kurylev2018} are satisfied by Section \ref{obs}, see Proposition \ref{pp_recoverEOS}.
 Then by \cite[Theorem 1.2]{Kurylev2018} the differential structure of $I(\hat{\mu}^{(j)}(-\epsilon), \hat{\mu}^{(j)}(1+ \epsilon))$ and the conformal class of $g$ can be  uniquely determined up to diffeomorphisms.
 %
 The arguments above are true especially when $V_{\partial, 1} = V_{\partial, 2} = (0, T) \times \partial N$.  Therefore,  
 we prove the first part of Theorem \ref{thm_recoverg}. 
 
 For the second part, in the case when $g_1,g_2$ are Ricci flat, the conformal diffeomorphism is an isometry according to \cite[Corrallory 1.3]{Kurylev2014}. 
 In the case when $\mathcal{M}^{(1)} =\mathcal{M}^{(2)}$ are independent of $x$, we follow the analysis in the proof of \cite[Theorem 1.1]{Wang2019}. 
 Set $\mathcal{M}^{(1)} =\mathcal{M}^{(2)} = \mathcal{M}$.
 It suffices to show 
 when $g_1 = e^{2 \gamma} g_2$ and $V_{\partial, 1} = V_{\partial,2}$,  we actually have $\gamma = 0$ from $\Lambda_1|_{V_{\partial, 1}} = \Lambda_2|_{V_{\partial, 2}}$. 
 We note that from the analysis above the assumption that the two ND maps are equal implies that 
 \[
 \gamma|_{V_\partial } = 0, \quad \partial_\nu^j \gamma |_{V_\partial } = 0, \quad j = 1, 2, \ldots,
 \]
 where $V_\partial = V_{\partial, 1} = V_{\partial,2}$, by comparing the normal derivatives of $g_1 = e^{2 \gamma} g_2$ restricted to $V_\partial$ in fixed semigeodesic coordinates. 
 Without loss of generosity, we can assume $\gamma =  0 $ in $\tM \setminus M$. 
 
 Now with $g_1 = e^{2 \gamma} g_2$, according to \cite[Proposition 4.5]{Lassas2018},
 the null bicharacteristic sets and Lagrangians obtained by the flow out of the Hamiltonian flow are the same for $g_1, g_2$.
 We choose different $u^{(k)}_j \in  I^\mu (\Lambda(x_j, \xi_j, s_0))$ for $k=1,2$ by \cite[Lemma 3.1]{Kurylev2014}, such that 
 \[
 u^{(k)}_j = \tQ_{g_k} h_j,\quad \text{with } h_j \in I^{\mu +1}(\mathcal{W}(x_j, \xi_j, s_0)). 
 \]
 Since $\gamma$ vanishes in $\tM \setminus M$, 
 one has
 \[
 \sigma^{(p)}(u^{(1)}_j) (p_j, \varrho^{(j)}) = \sigma^{(p)}(u^{(2)}_j) (p_j, \varrho^{(j)}), \quad 
 \partial_\nu u^{(1)}_j|_{V_\partial} =  \partial_\nu u^{(2)}_j |_{V_\partial} = f_j,  %
 \]
 for any $p_j \in V_\partial$ and $j = 1,2,3,4$. 
 Thus, let $f = \sum_{j=1}^4 \ep_j f_j$ as the same Neumann boundary condition. 
 We compute and compare the principal symbols of $\partial_{\vec{\epsilon}} \Lambda_k(f) |_{\vec{\epsilon}=0}$, for $k = 1,2$ in the following.   
 
 First, suppose $(p_j, \varrho^{(j)})$ with $p_j \in V_\partial$ and $(q, \zeta^{(j)})$ are joined by the same forward bicharacteristics in $\nxxi$. 
 It follows that 
 \begin{align*}
 &\sigma^{(p)}(u^{(k)}_j) (q, \zeta^{(j)}) = \sigma^{(p)}(\tQ_{g_k})(q, \zeta^{(j)},p, \varrho^{(j)})\sigma^{(p)}(u^{(k)}_j)(p_j, \varrho^{(j)}), \quad k = 1,2.
 \end{align*}
 By \cite[Propositon 4.5]{Lassas2018}, one has
 \begin{align*}
 \sigma^{(p)}(\tQ_{g_1})(q, \zeta^{(j)}, p_j, \varrho^{(j)}) = e^{-\gamma(q)}\sigma^{(p)}(\tQ_{g_2})(q, \zeta^{(j)},p_j, \varrho^{(j)}) e^{3\gamma(p_j)},
 \end{align*}
 which implies that 
 \begin{align}\label{eq_uj}
 \sigma^{(p)}(u^{(1)}_j) (q, \zeta^{(j)}) = e^{-\gamma(q)}\sigma^{(p)}(u^{(2)}_j) (q, \zeta^{(j)}). 
 \end{align}
 Recall $v_j  = u_j \mod C^{-\infty}(M)$ and they have the same principal symbol over $M$.
 By Proposition \ref{pp_ufour}, if $(y, \eta) \in T^*M|_{\partial M}$ is a covector away from $\unionGamma$ and lying along the forward null-bicharacteristic of $\square_g$ starting at $(q, \zeta)$, 
 then the principal symbol of ${\partial_{\epsilon_1}\partial_{\epsilon_2}\partial_{\epsilon_3}\partial_{\epsilon_4} \Lambda_k(f) |_{\epsilon_1 = \epsilon_2 = \epsilon_3 = \ep_4=0}}$ at the projection $(y_|, \eta_|)$ equals to 
\begin{align}\label{eq_ll}
-2 (2 \pi)^{-3}\sigma^{(p)}&(\mathcal{R})(y_|, \eta_|, y, \eta) \times  \\
&\sigma^{(p)}(\widetilde{Q}_{g_k})(y, \eta, q, \zeta) \mathcal{P}^{(i)}(\zeta^{(1)}, \zeta^{(2)}, \zeta^{(3)}, \zeta^{(4)}) 
(\prod_{j=1}^4 \sigma^{(p)}(v^{(k)}_j) (q, \zeta^{(j)})) \nonumber,
\end{align}
for $k = 1,2$ with $\zeta =\sum_{j=1}^4 \zeta^{(j)} $, since $\mathcal{C}_m$ defined in (\ref{C1}, \ref{C2}, \ref{C3}, \ref{C4}) vanishes for null covectors, for $m = 1,2,3,4$. 
By \cite{Wang2019}, we have 
\begin{align}\label{eq_pp}
\mathcal{P}^{(1)}(\zeta^{(1)}, \zeta^{(2)}, \zeta^{(3)}, \zeta^{(4)})   = e^{-4\gamma(q)}\mathcal{P}^{(2)}(\zeta^{(1)}, \zeta^{(2)}, \zeta^{(3)}, \zeta^{(4)}). 
\end{align}
Combining (\ref{eq_uj}, \ref{eq_ll}, \ref{eq_pp}) and 
\[
\quad \sigma^{(p)}(\tQ_{g_1})(y,\eta, q, \zeta^{(j)}) = \sigma^{(p)}(\tQ_{g_2})(y,\eta, q, \zeta^{(j)}) e^{3\gamma(q)},
\] we conclude that
\[
1= e^{3\gamma(q) -4\gamma(q) -4\gamma(q)} = e^{-5\gamma(q)},
\]
and therefore $\gamma(q)=0$ for any $q \in \mathbb{W}_1=\mathbb{W}_2$. 

\section*{Acknowledgment}
GU was partially supported by NSF, a Walker Professorship at UW and a
Si-Yuan Professorship at IAS, HKUST.

\begin{footnotesize}
    \bibliographystyle{plain}
    \bibliography{microlocal_analysis} 
\end{footnotesize}

\end{document}